\newtheorem{theorem}{Theorem}[section]
\newtheorem{lemma}[theorem]{Lemma}
\newtheorem{corollary}[theorem]{Corollary}
\newtheorem{proposition}[theorem]{Proposition}
\theoremstyle{remark}
\newtheorem{remark}[theorem]{Remark}
\newtheorem{example}[theorem]{Example}
\numberwithin{equation}{section}
\newcommand{\N}{\mathbb{N}}
\newcommand{\R}{\mathbb{R}}
\newcommand{\C}{\mathbb{C}}
\newcommand{\maj}{\prec_{\mathrm u}}
\newcommand{\smaj}{\prec_{\mathrm c}}
\newcommand{\smajtau}{\prec_{\mathrm T}}
\newcommand{\co}{\mathrm{co}}
\newcommand{\T}{\mathrm{T}}
\newcommand{\U}{\mathrm{U}}
\newcommand{\sa}{\mathrm{sa}}
\renewcommand{\epsilon}{\varepsilon}
\renewcommand{\leq}{\leqslant}
\renewcommand{\geq}{\geqslant}
\title{Majorization in C*-algebras}
\author{Ping Wong Ng}
\address{\hskip-\parindent
Department of Mathematics,
University of Louisiana at Lafayette,
Lafayette, USA.}
\email{png@louisiana.edu}
\author{Leonel Robert}
\address{\hskip-\parindent
Department of Mathematics,
University of Louisiana at Lafayette,
Lafayette, USA.}
\email{lrobert@louisiana.edu}
\author{Paul Skoufranis}
\address{\hskip-\parindent
Department of Mathematics and Statistics,
York University, Toronto, Canada.}
\email{pskoufra@yorku.ca}
\begin{document}
\begin{abstract}
We investigate the closed convex hull of unitary orbits of selfadjoint elements in arbitrary unital C*-algebras.  Using a notion of majorization against unbounded traces, a characterization of these closed convex hulls is obtained.  
Furthermore, for C*-algebras satisfying Blackadar's strict comparison of positive elements by traces or for collections of C*-algebras with a uniform bound on their nuclear dimension, an upper bound  for the number of unitary conjugates  in a convex combination required to approximate an element in the closed convex hull within a given error is shown to exist. This property, however,
 fails for certain ``badly behaved"  simple nuclear C*-algebras. 
\end{abstract}
\maketitle

\section{Introduction}
The relation of majorization between selfadjoint matrices is an important and well studied relation (see \cite{ando} and references therein).  It is thus natural to pursue its study in the more general realm of operator algebras. This has been done for von Neumann algebra factors (\cite{kamei, hiai-nakamura}) and for various classes of simple C*-algebras (\cite{skoufranis,ng-skoufranis}).
 A basic result on matrix majorization due to Uhlmann gives two equivalent ways of defining the majorization relation: Given selfadjoint matrices $a$ and $b$,  the following conditions on $a$ and $b$ are equivalent:
 \begin{enumerate}[(1)]
 \item
 	  $a$ belongs to the convex hull of
the unitary conjugates of $b$,
\item
 $\mathrm{Tr}(a)=\mathrm{Tr}(b)$ and 
$\mathrm{Tr}((a-t)_+)\leq \mathrm{Tr}((b-t)_+)$
for all $t\in \R$. Here $(a-t)_+$ is the element obtained from $a$ by functional calculus with the function $x\mapsto (x-t)_+:=\max(x-t,0)$ and $\mathrm{Tr}$ is the trace. 
\end{enumerate}
When either of these  conditions holds  $a$ is said to be majorized by $b$.  
We show in this paper  that  the equivalence above   has a natural generalization to arbitrary C*-algebras.  In order to formulate a suitable version of (2)  we must now look at possibly unbounded traces. Let $A$ be a C*-algebra. We call  a map $\tau\colon A_+\to [0,\infty]$ a trace if it is linear (additive, $\R_+$-homogeneous, and maps 0 to 0)  and satisfies that $\tau(x^*x)=\tau(xx^*)$ for all $x\in A$. We will always assume that traces are lower semicontinuous, i.e., such that $\tau(a)\leq \liminf_n \tau(a_n)$ if $a_n\to a$. We do  not assume,   however, that   traces are densely finite.  We denote the cone of all lower semicontinuous traces  by $\T(A)$. We prove below the following theorem:

\begin{theorem}\label{mainthm}
Let $A$ be a unital C*-algebra. Let  $a,b\in A$ be selfadjoint elements.
The following are equivalent:
\begin{enumerate}[(i)]
\item
$a\in \overline{\co\{ubu^*\mid u\in \U(A)\}}$,
\item
$\tau((a-t)_+) \leq \tau((b-t)_+)$ and  $\tau((-a-t)_+) \leq \tau((-b-t)_+)$
for all $\tau\in \T(A)$ and all $t\in \R$. 
\end{enumerate}
\end{theorem}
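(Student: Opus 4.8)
My plan is to prove the two implications separately: (i)$\Rightarrow$(ii) is routine, while (ii)$\Rightarrow$(i) is the substance, and I expect to obtain it by Hahn--Banach separation, in the spirit of the known von Neumann algebra characterizations.

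For (i)$\Rightarrow$(ii), fix $\tau\in\T(A)$ and $t\in\R$ and consider $\Phi(x):=\tau((x-t)_+)\in[0,\infty]$ for selfadjoint $x$. It is unitarily invariant, since $(uxu^*-t)_+=u(x-t)_+u^*$ and $\tau(uyu^*)=\tau(y)$. It is convex: for any nonnegative convex continuous $g$ the functional $x\mapsto\tau(g(x))$ is convex on selfadjoint elements, which for finite traces is the standard fact proved by diagonalising the convex combination and applying Schur--Horn majorisation, and which extends to arbitrary lower semicontinuous traces by truncating in the bidual. It is lower semicontinuous, because $x_n\to x$ in norm forces $(x_n-t)_+\to(x-t)_+$ in norm while $\tau$ is lower semicontinuous by hypothesis. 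Hence if $a=\lim_n c_n$ with each $c_n$ a convex combination of unitary conjugates of $b$, then $\Phi(a)\leq\liminf_n\Phi(c_n)\leq\Phi(b)$, which is the first family of inequalities in (ii); the second follows by the same argument applied to $-a$ and $-b$, since $-a$ lies in the closed convex hull of the unitary orbit of $-b$.

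For (ii)$\Rightarrow$(i), assume (ii) and set $K:=\overline{\co\{ubu^*\mid u\in\U(A)\}}$, a closed convex subset of the selfadjoint part of $A$. First, the map $\tau_\infty$ sending $0$ to $0$ and every nonzero element of $A_+$ to $\infty$ belongs to $\T(A)$, and applying (ii) to it yields $(b-t)_+=0\Rightarrow(a-t)_+=0$ and $(-b-t)_+=0\Rightarrow(-a-t)_+=0$ for all $t$, i.e.\ $\sigma(a)\subseteq[\min\sigma(b),\max\sigma(b)]$. Since both (i) and (ii) are unchanged under $(a,b)\mapsto(\alpha a+\beta,\alpha b+\beta)$ with $\alpha>0$, $\beta\in\R$ (for (ii) because $(\alpha x+\beta-t)_+=\alpha(x-\tfrac{t-\beta}{\alpha})_+$), after discarding the trivial case of scalar $b$ we may assume $0\leq a,b\leq1$. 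Now suppose $a\notin K$; by Hahn--Banach there are a bounded selfadjoint functional $f$ on $A$ and $\beta\in\R$ with $f(a)>\beta\geq f(ubu^*)$ for all $u\in\U(A)$, so it suffices to prove the reverse inequality $f(a)\leq\sup_{u\in\U(A)}f(ubu^*)$. View $f$ as a normal functional on $M:=A^{**}$ with orthogonal Jordan decomposition $f=\omega_+-\omega_-$. Using the spectral resolution of $a$ in $M$ and the Ky Fan--type variational description of the largest $\omega_+$-mass that a projection of prescribed $\omega_+$-size can carry, one should bound $f(a)$ from above by an integral over $t$ of terms $\tau((a-t)_+)$ and $\tau((-a-t)_+)$ for traces $\tau$ read off from $\omega_\pm$, and compute $\sup_u f(ubu^*)$ as the corresponding integral in $\tau((b-t)_+)$ and $\tau((-b-t)_+)$; the inequalities in (ii), together with the balancing of total masses forced by the normalisation, then give $f(a)\leq\sup_u f(ubu^*)$, a contradiction.

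The delicate point is this last step. Since $A^{**}$ need not be semifinite, $f$ cannot literally be written as $\tau(h\,\cdot\,)$, and one cannot simply invoke the matricial trace inequality. One must instead decompose $f$ along the central support of its support projection, isolate the semifinite part --- where the Ky Fan mechanism applies and produces genuine lower semicontinuous traces of $A$ --- from the purely infinite part, where the estimate should instead reduce to a largeness statement about closed convex hulls of unitary orbits whose input is the spectral containment extracted from $\tau_\infty$, and then verify that testing against all of these pieces is the same as testing against $\T(A)$. This is exactly why the statement must be formulated for the whole cone of lower semicontinuous traces rather than the bounded ones, and I expect the main effort to go into organising this reduction so that the integration-by-parts step, trivial for matrices, survives in the full generality of the theorem.
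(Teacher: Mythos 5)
Your direction (i)$\Rightarrow$(ii) is fine (the paper gets it more elementarily, via the operator inequality $(E(b)-t-\epsilon)_+\leq c^{1/2}E((b-t)_+)c^{1/2}$ for the averaging map $E$, rather than Jensen-type convexity of $x\mapsto\tau(g(x))$, but your route can be made to work). The problem is that the hard direction is not actually proved. Your plan --- separate $a$ from $K$ by a selfadjoint normal functional $f$ on $A^{**}$ and show $f(a)\leq\sup_u f(ubu^*)$ by a Ky Fan/rearrangement argument --- is a legitimate alternative organization (closer in spirit to Hiai--Nakamura for factors) to the paper's, which instead \emph{constructs} the approximating convex combinations inside $A^{**}$ and descends to $A$ by Kaplansky density (Proposition \ref{hahn-banacharg}). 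But everything after ``View $f$ as a normal functional on $M$'' is a sequence of ``one should bound'', ``should instead reduce to'', ``I expect the main effort to go into'', and you never supply the bound. That bound is the entire content of Section \ref{vNcase} of the paper, and it is not routine.

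Concretely, three things in your sketch are gaps rather than details. First, in the semifinite part the Ky Fan mechanism cannot be run against scalar traces when the center is non-trivial: the partial-sum inequalities that control $f(a)$ must be taken against the center-valued trace, pointwise on the spectrum of the center, and this forces a simultaneous decomposition of $a$ and $b$ into spectral projections with matching central supports and Murray--von Neumann comparable pieces (the paper's Proposition \ref{theform}), followed by a genuine induction using Dixmier-type pinchings (Lemma \ref{pinching}, Propositions \ref{finitealg} and \ref{finitealgmaj}); ``diagonalise and apply Schur--Horn'' has no direct analogue here. Second, your treatment of the non-finite part as a dichotomy ``semifinite where Ky Fan works / purely infinite where only spectral containment matters'' is too coarse: a properly infinite summand of $A^{**}$ can be type $\mathrm{II}_\infty$ (carrying genuinely semifinite traces paired against a \emph{bounded} functional $f$, where your variational description degenerates) or type III with non-trivial center and ideal structure, where the relevant input is not global spectral containment but the local comparison relations $P_k\propto P_{k'}$ between spectral projections that the tracial hypotheses encode (conditions (c), (c$'$) of Lemma \ref{tracialineq2} and the case analysis in Proposition \ref{vNmajorization}). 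Third, ``verify that testing against all of these pieces is the same as testing against $\T(A)$'' is itself a nontrivial identification of the traces arising from your decomposition of $f$ with restrictions of elements of $\T(A^{**})$ to $A$, and you give no argument for it. As written, the proposal identifies correctly where the difficulty lies but leaves it unresolved, so it does not constitute a proof of (ii)$\Rightarrow$(i).
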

In this theorem $\co(\cdot)$ denotes the convex hull of a set and $\U(A)$ the unitary group of $A$. If (i) holds we say that $a$ is majorized by $b$. 
If $A$ is a simple C*-algebra with at least one bounded trace, then condition (ii) of Theorem \ref{mainthm} takes the following form, which is  closer to the matrix case: $\tau((a-t)_+)\leq \tau((b-t)_+)$ and $\tau(a)=\tau(b)$ for all bounded traces $\tau$ and all $t\in \R$ (Corollary \ref{simple} (i)). However, since we allow for traces that are  not densely finite Theorem \ref{mainthm} covers the simple purely infinite C*-algebras as well (Corollary \ref{simple} (ii)); indeed, it covers all C*-algebras.  A  related theorem, also valid for all C*-algebras, is 
\cite[Theorem 1.1]{robert2}, which shows that agreement of two positive elements on all traces in $\T(A)$ is equivalent to  the Cuntz-Pedersen relation. 

A few words on  the proof of Theorem \ref{mainthm}: We use a well-known Hahn-Banach argument going back to Day (\cite{day}) to reduce the proof  to the von Neumann algebra $A^{**}$.  In the von Neumann algebra setting, we deal first with finite von Neumann algebras  using arguments inspired by the II$_1$ factor case  and then extend the proof to the general case.  In the process we obtain a 
formula for the distance from $a$ to $\overline{\co\{ubu^*\mid u\in \U(A)\}}$
in terms of tracial inequalities  (the zero distance case of this formula is Theorem \ref{mainthm}). 

In the context of majorization of matrices  one can observe the following phenomenon:
For any given $\epsilon>0$ there exists $N\in \N$ such that if $a,b\in M_n(\C)$ are selfadjoint matrices of norm at most 1
and $a$ is majorized by $b$, then  there exists  a  convex combination of at most $N$ unitary conjugates
of $b$ which is within a distance of $\epsilon$ from $a$. Here the number $N$  does not depend on $a$ or $b$, as long as they are contractions, or on the matrix size  $n$ (see \cite[Theorem 6.1]{ng-skoufranis} for an explicit formula).  We refer to this property as uniform majorization. (In the language of continuous logic of C*-algebras, that $N$ depends solely on $\epsilon$ implies that the relation of majorization is uniformly definable within the class of matrix C*-algebras; see \cite{munster}.)     Uniform  majorization does not hold for general C*-algebras and may fail even in a single C*-algebra. We show below that the C*-algebra constructed in \cite[Theorem 1.4]{robert} does not have uniform majorization (Example \ref{nouniformmaj}). 
This C*-algebra, which is simple and  nuclear, fails to have various regularity
properties of great significance in the classification of simple nuclear C*-algebras; to wit, it neither has strict comparison of positive elements by traces nor finite nuclear dimension.  We prove below that  these very same regularity properties serve to ensure uniform majorization:

\begin{theorem}\label{uniformthm}
For every $\epsilon>0$ there exists $N\in \N$ such that if $A$ is a unital C*-algebra with strict comparison
of positive elements by traces and $a,b\in A$ are selfadjoint contractions such that 
$
a\in \overline{\co\{ubu^*\mid u\in \U(A)\}}
$  
then 
\[
\Big\|a-\frac 1 N\sum_{i=1}^N u_ibu_i^*\Big\|<\epsilon
\]
for some $u_1,\dots,u_N\in \U(A)$.
\end{theorem}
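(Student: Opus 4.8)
The plan is to combine Theorem~\ref{mainthm} with a discretisation of the spectra, and then to build the required unitaries by a transport argument in which strict comparison is exactly what keeps the number of conjugates bounded.

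\emph{Step 1: tracial reformulation and reduction to finite spectrum.} By Theorem~\ref{mainthm}, hypothesis (i) is equivalent to $\tau((a-t)_+)\le\tau((b-t)_+)$ and $\tau((-a-t)_+)\le\tau((-b-t)_+)$ for all $\tau\in\T(A)$ and all $t\in\R$. I would first reduce to the case where $a$ and $b$ have finite spectrum contained in a fixed $\delta$-net $F\subset[-1,1]$ with $0\in F$, so $|F|=O(1/\delta)$ and $\delta<\epsilon/4$. This is done by rounding $b$ outward and $a$ inward to $F$: one produces contractions $a',b'$ with $\|a-a'\|,\|b-b'\|<\delta$ such that, separately on positive and negative parts, $a'_\pm\le a_\pm$ and $b_\pm\le b'_\pm$. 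Using the monotonicity $0\le c\le d\Rightarrow\tau((c-t)_+)\le\tau((d-t)_+)$ for $t\ge0$ and every $\tau\in\T(A)$, the required inequalities pass from $(a,b)$ to $(a',b')$, so $a'$ is again majorized by $b'$; and a convex combination of conjugates of $b'$ within $\epsilon/2$ of $a'$ is within $\epsilon$ of $a$. From now on $a=\sum_{j=1}^m v_jp_j$ and $b=\sum_{j=1}^m v_jq_j$, where $v_1<\dots<v_m$ are the points of $F$ (so $m=O(1/\epsilon)$) and $(p_j)_j,(q_j)_j$ are orthogonal families of projections in $A$ summing to $1$.

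\emph{Step 2: transport and quantisation via strict comparison.} For each normalised $\tau\in\T(A)$ the spectral distributions of $a$ and $b$ are atomic measures on $F$, and the surviving inequalities say exactly that the $a$-distribution is classically majorized by the $b$-distribution; hence there is a mean-preserving stochastic matrix $(\kappa^\tau_{kj})$ with $\tau(p_j)=\sum_k\tau(q_k)\kappa^\tau_{kj}$, and $d^\tau_{jk}:=\tau(q_k)\kappa^\tau_{kj}$ is a transport plan with the correct marginals and first moments. Choosing the couplings canonically (say by quantiles) makes $\tau\mapsto d^\tau_{jk}$ depend on $\tau$ in a controlled way, and after a further harmless $\epsilon$-discretisation the ratios $d^\tau_{jk}/\tau(q_k)\in[0,1]$ may be taken to lie in a fixed finite set of ``levels'' whose cardinality depends only on $\epsilon$; here one uses that the inequalities $\tau((a-v_r)_+)\le\tau((b-v_r)_+)$ bound these ratios uniformly in $\tau$, hence uniformly over $A$. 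Strict comparison then lets one realise the plan inside $A$: one splits, inside $q_kAq_k$, the projection $q_k=\bigoplus_j q_{jk}$ into orthogonal positive elements whose ranks track $\tau\mapsto d^\tau_{jk}$, splits $p_j=\bigoplus_k p_{jk}$ compatibly, and obtains that $p_{jk}$ and $q_{jk}$ are close enough (given the discretisation) to unitarily equivalent to be usable within the $\epsilon$-budget. Averaging over a suitable finite family of unitaries permuting these pieces — in a $\mathrm{II}_1$ factor, cyclic permutations of a partition of $1$ into equivalent sub-pieces; in general, the analogous configuration furnished by strict comparison — yields $u_1,\dots,u_N\in\U(A)$ with $\|a-\frac1N\sum_{i=1}^N u_ibu_i^*\|<\epsilon$, and $N$ is bounded by a function of $m=O(1/\epsilon)$ and of the number of levels, hence of $\epsilon$ alone.

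\emph{Main obstacle.} The hard part is the realisation in Step~2: passing from the Cuntz-theoretic transport data to an \emph{honest} convex combination with equal weights $1/N$ of unitary conjugates of $b$, simultaneously across all $m$ grid values, using \emph{only} strict comparison and not a divisibility hypothesis such as $\mathcal Z$-stability. Quantitatively, one must manufacture inside $A$, up to an error $O(\epsilon)$, a prescribed amount (morally a $\mathbb Z/N'$ with $N'=O(1/\epsilon)$) of ``room'' in which to rotate the transported pieces, and extract the $A$-independent bound on $N$ from almost unperforation of $\mathrm{Cu}(A)$ — in particular the compatible splitting of $q_kAq_k$ into pieces of prescribed ranks, and the uniform control of the discretisation levels, must be made robust. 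This is precisely the mechanism that breaks down when strict comparison fails: for the C*-algebra of Example~\ref{nouniformmaj}, whose Cuntz semigroup is badly behaved, no such splitting with a uniformly bounded number of conjugates is available, which is why uniform majorization fails there and why strict comparison is genuinely needed in the argument above.
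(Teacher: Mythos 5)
Your approach is fundamentally different from the paper's, and unfortunately it contains a gap at the very first step that cannot be repaired, in addition to the incompleteness you candidly acknowledge in Step~2.

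\textbf{The fatal issue in Step~1.} You want to replace $a$ and $b$ by selfadjoint contractions $a', b'$ of finite spectrum with $\|a-a'\|,\|b-b'\|<\delta$, so that you can then write $a'=\sum_j v_jp_j$ and $b'=\sum_j v_jq_j$ with projections $p_j,q_j\in A$. But a general unital C*-algebra with strict comparison of positive elements by traces need not contain enough projections for this to be possible: the Jiang--Su algebra $\mathcal Z$ has strict comparison (being simple, unital, $\mathcal Z$-stable) but its only projections are $0$ and $1$, so a selfadjoint $a\in\mathcal Z$ with connected spectrum $[-1,1]$ cannot be norm-approximated by finite-spectrum selfadjoints in $\mathcal Z$. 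Norm approximation by finite-spectrum elements is a real-rank-zero phenomenon, and strict comparison does not imply real rank zero. So your entire transport framework --- which needs spectral projections of $a$ and $b$ sitting \emph{inside} $A$ --- is unavailable in exactly the generality the theorem claims. The ``hands-on'' constructions in the paper of Ng--Skoufranis that inspired you are stated precisely for \emph{real rank zero} simple C*-algebras, where the finite-spectrum reduction is legitimate; they do not transfer to the setting at hand.

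\textbf{The acknowledged gap in Step~2.} Even granting the finite-spectrum reduction, your ``Main obstacle'' paragraph is not a proof but an honest admission that you cannot turn the abstract transport/comparison data into a bona fide equal-weights convex combination of $N$ unitary conjugates with $N$ independent of $A$; as you note, this would seem to want divisibility properties (like those furnished by $\mathcal Z$-stability) that strict comparison alone does not provide. The paper sidesteps this entirely: it argues by contradiction via an infinite product. Supposing no uniform $N$ exists, one picks counterexamples $a_n,b_n\in A_n$, forms $a=(a_n),b=(b_n)\in\prod_n A_n$, uses Proposition~\ref{prodstrict} (tracial submajorization passes to products over a class with strict comparison) to conclude $a\smajtau b$ and $1-a\smajtau 1-b$ in $\prod A_n$, applies Theorem~\ref{mainthm} to get $a\maj b$ there, and projects a finite convex combination of unitary conjugates to a single coordinate to reach a contradiction. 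This is non-constructive and gives no explicit bound on $N$ --- the paper says as much at the end of the introduction --- but it requires no splitting of projections, no discretisation, and no real rank zero, which is exactly why it works in the stated generality and your approach does not.
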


A version of Theorem \ref{uniformthm} for C*-algebras with finite nuclear dimension is also valid (Theorem \ref{uniformnuc}).  
We obtain the following interesting application of 
uniform majorization: 
Let $A$ be a unital C*-algebra with either strict comparison by traces or finite nuclear dimension.
Let  $B\subseteq A_{\infty}$ be a separable C*-subalgebra of the sequence algebra $A_\infty:=\prod_{i=1}^\infty A/\bigoplus_{i=1}^\infty A$. Then for every selfadjoint $a\in A_{\infty}$ the set   $\overline{\co(\{uau^*\mid u\in \U(A_\infty)\})}$ has non-empty intersection with 
$B'\cap A_{\infty}$.

The paper is organized as follows: In Section \ref{prelims} we define the majorization and submajorization relations and prove some of their general properties which will be needed later on. In Section \ref{vNcase} we prove  Theorem \ref{mainthm}
when $A$ is a von Neumann algebra (at this point  we assume that $a$ and $b$ are positive contractions as  matter of convenience). In Section \ref{proofofmain}
we prove Theorem \ref{mainthm} together with a more general distance formula and we derive some corollaries of these theorems. In Section \ref{uniform} we investigate the property of uniform majorization described above. The proof of Theorem \ref{uniformthm}, unlike the more hands-on methods used in \cite{ng-skoufranis}, does not yield and explicit formula for the number $N$ in terms of $\epsilon$.

\section{Preliminaries on majorization and submajorization}\label{prelims}
Let $A$ be a C*-algebra. Let us denote by $A_+$ and $A_{\sa}$  the sets of  positive and selfadjoint elements of $A$, respectively. If $A$ is unital, we let $\U(A)$ denote the unitary group of $A$.
If $a\in A_{\sa}$ and $t\in \R$  we denote by  $(a-t)_+$  the element obtained from $a$ by functional calculus  with the function $x\mapsto (x-t)_+:=\max(x-t,0)$.

Given  $a,b\in A_{\sa}$ let us say that $a$ is submajorized by $b$, and denote it by $a\smaj b$, if
\[
a\in\overline{\co(\{dbd^*\mid \|d\|\leq 1\})}.
\] 
Suppose that  $A$ is unital. Let us say that $a$ is majorized by $b$, and denote it by $a\maj b$, if 
\[
a\in \overline{\co(\{uau^*\mid u\in \mathrm U(A)\})}.
\]
It is possible to extend  the relation of majorization to non-unital C*-algebras simply by passing to the unitization. However, we will always assume that $A$ is unital when discussing majorization. 
Both submajorization and majorization are preorder relations. 

We use the following lemma quite frequently and without  reference:

\begin{lemma}\label{orthosum}
Let $a_1,a_2,b_1,b_2\in A_{\sa}$ be such that $a_1\smaj b_1$, $a_2\smaj b_2$,
$a_1a_2=0$ and $b_1b_2=0$. Then $a_1+a_2\smaj b_1+b_2$.
\end{lemma}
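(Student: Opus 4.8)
The plan is to exploit the defining feature of submajorization — membership in a closed convex hull of the form $\overline{\co(\{dbd^*\mid \|d\|\leq 1\})}$ — together with the hypothesis that the two pieces are orthogonal on both the $a$-side and the $b$-side, so that the contractions implementing each submajorization can be ``glued'' along the orthogonal decomposition.

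First I would reduce to the case of convex combinations without the closure. By definition $a_1 \smaj b_1$ gives a net of elements of $\co(\{d b_1 d^* \mid \|d\|\leq 1\})$ converging to $a_1$, and similarly for $a_2$; if I can show that each such convex combination of the form $\sum_j \lambda_j d_j b_1 d_j^* + \sum_k \mu_k e_k b_2 e_k^*$ (with the obvious constraints) lies in $\overline{\co(\{d(b_1+b_2)d^*\mid \|d\|\leq 1\})}$, then taking limits and using that the latter set is closed and convex finishes the argument. So it suffices to treat finite convex combinations. The key algebraic step is the following: if $\|d\|\leq 1$ and $\|e\|\leq 1$, I want a single contraction $c$ with $c(b_1+b_2)c^* = db_1d^* + eb_2e^*$ (or at least something close to it, up to a convex-combination argument). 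Here is where orthogonality enters. Since $b_1 b_2 = 0$, the support projections $p_1, p_2$ of $b_1, b_2$ (in $A^{**}$) are orthogonal; replacing $d$ by $d p_1$ and $e$ by $e p_2$ changes nothing (as $d b_1 d^* = (d p_1) b_1 (d p_1)^*$), so I may assume $d = d p_1$ and $e = e p_2$. Set $c = d + e$. Then $c(b_1+b_2)c^* = (d+e)(b_1+b_2)(d+e)^* = d b_1 d^* + e b_2 e^*$, using $d b_2 = d p_1 b_2 = 0$, $e b_1 = 0$, and the cross terms vanishing similarly. It remains to check $\|c\|\leq 1$: since $d^*d \leq p_1$, $e^*e \leq p_2$ and $p_1 p_2 = 0$, we get $c^*c = d^*d + d^*e + e^*d + e^*e$; the cross terms $d^*e = d^* p_1 p_2 e = 0$ and likewise $e^*d = 0$, so $c^*c = d^*d + e^*e \leq p_1 + p_2 \leq 1$, hence $\|c\|\leq 1$.

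With this lemma in hand, a general element $\sum_j \lambda_j d_j b_1 d_j^*$ of $\co(\{db_1d^*\mid\|d\|\leq 1\})$ and a general element $\sum_k \mu_k e_k b_2 e_k^*$ can be combined: by refining the two partitions of unity of scalars to a common one (using that the indices are independent, one forms products $\lambda_j \mu_k$), one writes $a_1 + a_2$ as a limit of convex combinations $\sum_{j,k}\lambda_j\mu_k\big(d_j b_1 d_j^* + e_k b_2 e_k^*\big) = \sum_{j,k}\lambda_j\mu_k\, c_{j,k}(b_1+b_2)c_{j,k}^*$ with each $c_{j,k}$ a contraction. Thus every approximant of $a_1+a_2$ lies in $\co(\{d(b_1+b_2)d^*\mid\|d\|\leq 1\})$, and passing to the limit gives $a_1 + a_2 \smaj b_1 + b_2$.

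The main obstacle I anticipate is bookkeeping: one must be careful that the support projections live in $A^{**}$ rather than in $A$, so the contraction $c = d+e$ is constructed in $A^{**}$; this is harmless because the elements $c(b_1+b_2)c^*$ still land in $A$ (they equal $db_1d^* + eb_2e^*$) and because the submajorization relation is stated in terms of contractions $d$, which one may as well allow to range over $A^{**}$ after noting that $\overline{\co(\{db_1d^*\mid d\in A, \|d\|\leq 1\})} = \overline{\co(\{db_1d^*\mid d\in A^{**}, \|d\|\leq 1\})}$ by a standard approximation/Kaplansky density argument. Alternatively one can avoid $A^{**}$ altogether by approximating $b_i$ by $g_i b_i$ where $g_i$ is a continuous function of $b_i$ that is $1$ on the spectrum of $b_i$ away from $0$, arranging $g_1 g_2 = 0$ in $A$, and absorbing the small errors into the closure — this keeps everything inside $A$ at the cost of one more $\epsilon$-estimate. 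Either route is routine; the essential content is the orthogonal gluing $c = d+e$ above.
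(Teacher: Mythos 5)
There is a genuine gap in the norm estimate for the glued contraction, and tracing it exposes a more fundamental problem: your argument never uses the hypothesis $a_1a_2=0$, but the lemma is false without it. (Take $A=M_2$, $b_1=e_{11}$, $b_2=e_{22}$, $a_1=a_2=e_{11}$; then $a_1\smaj b_1$ and $a_2\smaj b_2$, but $a_1+a_2=2e_{11}$ has norm $2$ while every element of $\overline{\co\{d(b_1+b_2)d^*\mid\|d\|\leq 1\}}$ has norm at most $\|b_1+b_2\|=1$.) So a correct proof must exploit the orthogonality of the $a_i$'s somewhere.

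Concretely, the error is in the computation of $c^*c$. From $d=dp_1$ and $e=ep_2$ you get $d^*=p_1d^*$ and $e=ep_2$, hence $d^*e=p_1d^*ep_2$. You wrote ``$d^*e=d^*p_1p_2e=0$'', but $p_1$ and $p_2$ are not adjacent; they are separated by $d^*$ and $e$, and $d^*e$ need not vanish. The cross terms in $c^*c$ therefore survive and $\|c\|\leq 1$ does not follow. What you have proved is only that $cc^*=dd^*+ee^*$ (since $de^*=dp_1p_2e^*=0$ does hold), which gives no useful bound on $\|c\|$. To kill the cross terms in $c^*c$ one also needs a \emph{left}-support normalization of $d$ and $e$, and that is exactly what $a_1a_2=0$ buys: multiplying the approximants by approximate units of $\overline{|a_1|A|a_1|}$ and $\overline{|a_2|A|a_2|}$ respectively forces $d=qd$ and $e=q'e$ with $qq'=0$, so that $d^*e=d^*qq'e=0$. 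This is precisely the step the paper performs (it also dodges $A^{**}$ by using approximate units inside $A$ at the cost of an $\epsilon$, as you correctly anticipated as a possible alternative for the $b$-side normalization). With both the left (from $a_1a_2=0$) and right (from $b_1b_2=0$) normalizations in place, your gluing $c=d+e$ does work and the remainder of your convex-combination bookkeeping goes through.
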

\begin{proof}
Let $\epsilon>0$. Suppose that 
\[
\|a_1-\frac 1 N\sum_{i=1}^N d_{i,1}b_1d_{i,1}^*\| <\epsilon\hbox{ and }
\|a_2-\frac 1 N\sum_{i=1}^N d_{i,2}b_2d_{i,2}^*\| <\epsilon,
\]
for some contractions $d_{i,1},d_{i,2}\in A$.
Multiplying by an approximate unit of $\overline{|a_1|A|a_1|}$ on the left and on the right of the first equation and replacing $b_1$ by $|b_1|^{\frac 1 n}b_1|b_1|^{\frac1 n}$ for large enough $n$ we can assume that $d_{i,1}\in \overline{|a_1|A|b_1|}$ for all $i$. Similarly, we can assume that $d_{i,2}\in  \overline{|a_2|A|b_2|}$ for all $i$.
 Define $d_{i}=d_{i,1}+d_{i,2}$ for all $i$.
A straightforward calculation exploiting that $a_1a_2=b_1b_2=0$ shows that
\[
\|(a_1+a_2)-\frac 1 N\sum_{i=1}^N d_{i}(b_1+b_2)d_{i}^*\|<2\epsilon.
\]
This proves the lemma.
\end{proof}

\begin{lemma}\label{basicsmaj}
	Let $a,b\in A_{\sa}$.
	\begin{enumerate}[(i)]
		\item
		If $a\leq b$, then $a_+\smaj b_+$.
		
		\item
		If   $\|a-b\|\leq r$, then $(a-r)_+\smaj b_+$.
		
		\item
		If  $a\smaj b$, then $(a-t)_+\smaj (b-t)_+$ for all $t\in [0,\infty)$.
	\end{enumerate}
\end{lemma}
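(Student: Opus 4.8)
The plan is to prove the three claims of Lemma~\ref{basicsmaj} in order, using the definition of $\smaj$ directly together with Lemma~\ref{orthosum}.

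For (i), suppose $a \le b$. Then $a_+ \le b_+$ as well, since $a \le b$ implies $a_+ \le b_+$ (the function $x\mapsto x_+$ is operator monotone). Now if $0 \le c \le d$ in $A_+$, then $c = d^{1/2}(d^{-1/2}c d^{-1/2})d^{1/2}$ modulo approximate-unit corrections; more cleanly, $c = \lim_n (c^{1/2}(d+1/n)^{-1/2}) d (c^{1/2}(d+1/n)^{-1/2})^*$, and the contractions $e_n := c^{1/2}(d+1/n)^{-1/2}$ satisfy $\|e_n\| \le 1$ because $c \le d \le d + 1/n$. Hence $c \in \overline{\{dcd^* : \|d\|\le 1\}} \subseteq \overline{\co(\{dcd^* : \|d\|\le 1\})}$, i.e. $c \smaj d$. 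Applying this with $c = a_+$, $d = b_+$ gives $a_+ \smaj b_+$.

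For (ii), assume $\|a-b\| \le r$. Then $a \le b + r$, so $a - r \le b$, hence $(a-r)_+ \le b_+ \le \|\cdot\|$... more precisely $a - r \le b$ gives $(a-r)_+ \le b_+$ by operator monotonicity of $x\mapsto x_+$, and then (i) yields $((a-r))_+ = ((a - r) - 0)_+ \smaj b_+$. Actually the cleanest route: from $a - r \le b$ and part (i) applied to the pair $a - r \le b$, we get $(a-r)_+ \smaj b_+$ directly, since $b_+ = (b)_+$ and $(a-r)_+$ is the positive part of $a - r$.

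For (iii), suppose $a \smaj b$ and fix $t \ge 0$. The point is that the map $a \mapsto (a-t)_+$ interacts well with compressions by contractions: if $\|d\| \le 1$ then $(dbd^* - t)_+ \smaj (b - t)_+$. Indeed, $dbd^* \le d(b-t)d^* + t\,dd^* \le d(b-t)_+ d^* + t$ (using $b - t \le (b-t)_+$ and $dd^* \le 1$), so $(dbd^* - t)_+ \le (d(b-t)_+d^*)_+ = d(b-t)_+ d^*$ by operator monotonicity of $x\mapsto x_+$ (the last element being already positive). By (i), $(dbd^* - t)_+ \smaj d(b-t)_+ d^* \smaj (b-t)_+$, the latter because $d(b-t)_+d^*$ is visibly a compression of $(b-t)_+$ by a contraction. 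Next, for a convex combination $c = \frac1N\sum_i d_i b d_i^*$, one has $(c - t)_+ \smaj (b-t)_+$: since $c \le \frac1N\sum_i d_i(b-t)_+d_i^* + t$, we get $(c-t)_+ \le \frac1N\sum_i d_i(b-t)_+d_i^*$, which is a compression-combination of $(b-t)_+$, and again (i) finishes it. Finally, passing to the closure: if $a_n \to a$ with each $a_n$ a convex combination of compressions of $b$, then $(a_n - t)_+ \to (a-t)_+$ by continuity of functional calculus, and since each $(a_n-t)_+ \smaj (b-t)_+$ and $\smaj$ is a closed relation, $(a-t)_+ \smaj (b-t)_+$.

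The main obstacle is the bookkeeping in (iii): one must be careful that each elementary inequality $(dbd^*-t)_+ \le d(b-t)_+d^*$ is genuinely valid as a $C^*$-inequality, and that the reduction $x\mapsto (x-t)_+$ commutes appropriately with taking convex combinations and norm limits. All the individual steps reduce to operator monotonicity of $x\mapsto x_+$, the elementary fact that a compression of a positive element by a contraction is submajorized by it, and the fact that $\smaj$ is by definition a closed relation, so no step is deep, but the order of operations must be managed carefully.
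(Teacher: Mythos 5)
The core of parts (i) and (iii) rests on the repeated claim that the map $x\mapsto x_+$ is operator monotone, i.e., that $a\leq b$ implies $a_+\leq b_+$ for selfadjoint $a,b$. This is false. For instance, in $M_2(\C)$ take
\[
a=\begin{pmatrix}1&0\\0&-1\end{pmatrix},\qquad b=\begin{pmatrix}2&1\\1&0\end{pmatrix}.
\]
Then $b-a=\begin{pmatrix}1&1\\1&1\end{pmatrix}\geq 0$, so $a\leq b$, but $a_+=\begin{pmatrix}1&0\\0&0\end{pmatrix}$ while $b_+$ is $(1+\sqrt2)$ times the rank-one projection onto $\mathrm{span}\{(1,\sqrt2-1)\}$, and a direct computation shows $\det(b_+-a_+)<0$, so $a_+\not\leq b_+$. (More generally, $x\mapsto x_+$ is piecewise linear and not $C^1$, hence not operator monotone.) Because of this your argument for (i) collapses at its first step: once you've written ``$a\leq b\Rightarrow a_+\leq b_+$'' the remaining (correct) observation that $0\leq c\leq d$ implies $c\smaj d$ no longer applies to the pair $(a_+,b_+)$.

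The genuine content of (i) is precisely how to get from the selfadjoint inequality $a\leq b$ to a relation between $a_+$ and $b_+$ when the naive pointwise implication fails. The paper does this by compressing by a contraction from $C^*(a)$: fix $\epsilon>0$, choose a positive contraction $c\in C^*(a)$ with $ca=(a-\epsilon)_+$, and multiply $a\leq b$ on both sides by $c^{1/2}$ to get
\[
(a-\epsilon)_+ = c^{1/2}ac^{1/2} \leq c^{1/2}bc^{1/2}\leq c^{1/2}b_+c^{1/2}\smaj b_+,
\]
then use the already-established positive case ($0\leq x\leq y\Rightarrow x\smaj y$, which your $e_n=c^{1/2}(d+1/n)^{-1/2}$ argument handles correctly) together with transitivity, and finally let $\epsilon\to 0$. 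This $\epsilon$-cutoff is exactly what replaces the false operator inequality $a_+\leq b_+$; it cannot be dispensed with. Your (ii) is fine once (i) is fixed, since it only applies (i) to $a-r\leq b$. In (iii) you again cite operator monotonicity of $x\mapsto x_+$ to pass from $dbd^*-t\leq d(b-t)_+d^*$ to the operator inequality $(dbd^*-t)_+\leq d(b-t)_+d^*$; this operator inequality need not hold, but the submajorization $(dbd^*-t)_+\smaj d(b-t)_+d^*$ that you actually need does follow by invoking part (i), so (iii) is salvageable once (i) is repaired and you replace ``$\leq$'' by ``$\smaj$'' at that step.
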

\begin{proof}
	(i) Assume first that $a\geq 0$ (so $b\geq 0$).  Since $a\leq b$,  $a$ is in the hereditary C*-subalgebra generated by $b$. Hence,  $b^{\frac 1 n}ab^{\frac 1 n}\to a$, which  shows that $a\smaj b$, as desired.  Suppose now that  $a\in A_{\sa}$. Let $\epsilon>0$. Let $c\in C^*(a)$ be a positive contraction such that $ca=(a-\epsilon)_+$.  Multiplying by $c^{1/2}$ on the left and on the right of  $a\leq b$  we get 
	\[
	(a-\epsilon)_+\leq c^{\frac 1 2}bc^{\frac 1 2}\leq c^{\frac 1 2}b_+c^{\frac 1 2}\smaj 
	b_+.
	\] 
Since submajorization is transitive and we have already shown that the order on positive elements  is stronger than the submajorization relation,  $(a-\epsilon)_+\smaj b_+$ for all $\epsilon>0$.
	Letting $\epsilon\to 0$ we are done.
	
	(ii) We have that $a-r\leq b$. So we can apply (i) to get that $(a-r)_+\smaj b_+$.
	
	(iii)   Choose $b'=\frac 1 N\sum_{i=1}^N d_ibd_i^*$, with $\|d_i\|\leq 1$ for all $i$, such that
	$\|a-b'\|<\epsilon$. From $a-t-\epsilon\leq b'-t$ we get, by (i), that $(a-t-\epsilon)_+\smaj (b'-t)_+$. Also,
	\[
	b'-t\leq \frac 1 N\sum_{i=1}^Nd_i(b-t)d_i^*\leq  \frac 1 N\sum_{i=1}^Nd_i(b-t)_+d_i^*.
	\] 
	Hence, by (i), $(b'-t)_+$ is submajorized by $\frac 1 N\sum_{i=1}^Nd_i(b-t)_+d_i^*$, which in turn is submajorized by $(b-t)_+$. By the transitivity of submajorization, $(a-t-\epsilon)_+\smaj (b-t)_+$ for all $\epsilon>0$, from which the desired result follows.
\end{proof}

\begin{proposition}\label{reducetopositive}
	Let $a,b\in A_{\sa}$. Then $a\smaj b$ if and only if $a_+\smaj b_+$ and $a_-\smaj b_-$.
\end{proposition}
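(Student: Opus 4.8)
The plan is to prove the two implications separately, with the forward direction being essentially immediate and the reverse direction being the substantive part. For the forward direction, suppose $a \smaj b$. Applying Lemma \ref{basicsmaj}(iii) with $t = 0$ gives $a_+ = (a)_+ \smaj (b)_+ = b_+$ directly. For $a_- \smaj b_-$, observe that $-a \smaj -b$ follows immediately from $a \smaj b$ (submajorization is clearly preserved under negation, since $d(-b)d^* = -dbd^*$ and the convex hull and closure operations commute with negation), and then apply Lemma \ref{basicsmaj}(iii) with $t=0$ again to $-a \smaj -b$ to obtain $a_- = (-a)_+ \smaj (-b)_+ = b_-$.

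The reverse direction is where the real work lies. Assume $a_+ \smaj b_+$ and $a_- \smaj b_-$. The natural idea is to combine the two submajorizations using orthogonality: we have $a_+ a_- = 0$ and $b_+ b_- = 0$, so Lemma \ref{orthosum} applies and yields $a_+ + a_- \smaj b_+ + b_-$, i.e. $|a| \smaj |b|$. The obstacle is that this gives submajorization of the absolute values, not of $a$ and $b$ themselves — we have lost the sign information. So the argument needs a refinement: rather than submajorizing $a_+$ and $a_-$ by $b_+$ and $b_-$, I would seek to submajorize $a_+$ by $b_+$ \emph{using only operators supported appropriately} so that the $a_-$ part is untouched, and dually for $a_-$. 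Concretely, I expect to mimic the proof of Lemma \ref{orthosum}: given $\epsilon > 0$, pick contractions with $\|a_+ - \frac 1 N \sum_i d_{i,1} b_+ d_{i,1}^*\| < \epsilon$ and $\|a_- - \frac 1 N \sum_i d_{i,2} b_- d_{i,2}^*\| < \epsilon$ (after matching the number of terms $N$ by repetition), then cut down so that $d_{i,1} \in \overline{|a_+| A |b_+|}$ and $d_{i,2} \in \overline{|a_-| A |b_-|}$. Setting $d_i = d_{i,1} - d_{i,2}$, a direct computation using $a_+ a_- = b_+ b_- = 0$ — which forces the cross terms $d_{i,1} b_+ d_{i,2}^*$, $d_{i,2} b_- d_{i,1}^*$, etc. to vanish — gives $d_i(b_+ - b_-)d_i^* = d_{i,1}b_+ d_{i,1}^* + d_{i,2}b_- d_{i,2}^*$, hence
\[
\Big\| (a_+ - a_-) - \frac 1 N \sum_{i=1}^N d_i\, b\, d_i^* \Big\| < 2\epsilon,
\]
since $b = b_+ - b_-$. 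Letting $\epsilon \to 0$ gives $a = a_+ - a_- \smaj b$.

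The main subtlety to get right is the cutting-down step that places $d_{i,1}$ in $\overline{|a_+|A|b_+|}$ and $d_{i,2}$ in $\overline{|a_-|A|b_-|}$, so that the products against $b_+$, $b_-$ and against (approximate units of) the orthogonal hereditary subalgebras behave as claimed; this is exactly the maneuver already carried out in the proof of Lemma \ref{orthosum}, so I would either invoke that argument verbatim or, more cleanly, simply replace the plus signs by minus signs in the construction there and note that the computation goes through unchanged because $b_+ - b_-$ replaces $b_1 + b_2$ and all cross terms still vanish by orthogonality. In fact, once the forward direction and Lemma \ref{orthosum} are in hand, the cleanest writeup may be to observe that the proof of Lemma \ref{orthosum} never actually used that $b_1 + b_2 \geq 0$ or any positivity of the summands, only the orthogonality relations $a_1 a_2 = 0$ and $b_1 b_2 = 0$; thus the \emph{signed} version ``$a_1 \smaj b_1$, $a_2 \smaj b_2$, $a_1 a_2 = 0$, $b_1 b_2 = 0$ imply $a_1 + a_2 \smaj b_1 + b_2$ for selfadjoint, not necessarily positive, $b_i$'' holds by the identical proof, and applying it with $a_1 = a_+$, $a_2 = -a_-$, $b_1 = b_+$, $b_2 = -b_-$ finishes the proposition.
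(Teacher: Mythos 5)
Your proof is correct and ends up at essentially the paper's argument: the forward direction is Lemma \ref{basicsmaj}(iii) at $t=0$ (the paper simply redoes that computation directly), and your closing observation---that Lemma \ref{orthosum} is already stated for selfadjoint, not necessarily positive, elements and hence applies to $a_1=a_+$, $a_2=-a_-$, $b_1=b_+$, $b_2=-b_-$---is exactly the paper's one-line reverse implication, so the intermediate by-hand reconstruction is unnecessary. (One minor slip there: the identity should read $d_i b d_i^*=d_{i,1}b_+d_{i,1}^*-d_{i,2}b_-d_{i,2}^*$, with a minus sign, which is what your displayed estimate actually uses.)
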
	

\begin{proof}
Suppose first that $a\smaj b$. Let $a_n\in A_{\sa}$ be elements such that
$a_n\to a$ and each $a_n$ is a finite convex combination of elements 
of the form $dbd^*$. Since $(a_n)_+\to a_+$ it suffices to show that $(a_n)_+\smaj b_+$ for all $n$.  Put differently, it suffices to assume that $a=\frac 1 N\sum_{i=1}^Nd_ibd_i^*$ for some $\|d_i\|\leq 1$. In this case we have that  
\[
a\leq \frac 1 N\sum_{i=1}^Nd_ib_+d_i^*.
\] 
By Lemma \ref{basicsmaj},
\[
a_+\smaj (\frac 1 N\sum_{i=1}^Nd_ib_+d_i^*)_+=\frac 1 N\sum_{i=1}^Nd_ib_+d_i^*.
\]
The rightmost side is clearly submajorized by $b_+$. Thus, $a_+\smaj b_+$.
Since $-a\smaj -b$ we also have that $a_-=(-a)_+\smaj (b_+)=b_-$. This proves one implication.

Suppose now that $a_+\smaj b_+$ and $a_-\smaj b_-$. By Lemma \ref{orthosum}
we have that $a_+-a_-\smaj b_+-b_-$, i.e., $a\smaj b$, as desired.
\end{proof}

In light of the previous proposition we will largely focus on the study of the submajorization relation among positive elements. It will be easy enough to
extend our main results  to selfadjoint elements relying on  this proposition.

We call trace on $A$ a map $\tau\colon A_+\to [0,\infty]$ that is $\R^+$-linear, maps $0$ to $0$, and satisfies that $\tau(x^*x)=\tau(xx^*)$ for all $x\in A$. Notice that $\infty$ is in the range of $\tau$  and that we do not assume that $\tau$ is densely finite. We denote by $\T(A)$ the cone of all lower semicontinuous traces on $A$.  The reader is referred to \cite{elliott-robert-santiago} for basic facts on $\T(A)$.
Observe that for each closed two-sided ideal $I\subseteq A$ the map $\tau_I\colon A_+\to [0,\infty]$ defined as 
$\tau_I(a)=0$ if $a\in I_+$ and $\tau_I(a)=\infty$ otherwise is a lower semicontinuous trace. 
In particular, if we choose $I=\{0\}$ we get a trace that is $\infty$ everywhere except at
0. 

Let $a,b\in A_+$. We say that $a$ is tracially submajorized by $b$ if 
\[
\tau((a-t)_+)\leq \tau((b-t)_+)\hbox{ for all  $\tau\in \T(A)$ and all $t\in [0,\infty)$.}
\]
We denote this relation by $a\smajtau b$.

The following proposition clarifies the meaning of tracial submajorization in C*-algebras with ``very few'' traces.

\begin{proposition}
	Suppose that the C*-algebra $A$ has no l.s.c. traces other than the traces $\tau_I$ associated to its closed two-sided ideals (e.g., $A$ is purely infinite). Let $a,b\in A_+$.
	Then $a\smajtau b$ if and only if $\|\pi_I(a)\|\leq \|\pi_I(b)\|$ for all  quotient maps 
	$\pi_I\colon A\to A/I$.
\end{proposition}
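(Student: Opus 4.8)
The plan is to use the hypothesis to cut down the (a priori enormous) family of traces appearing in the definition of $\smajtau$ to the traces $\tau_I$, and then to translate each inequality $\tau_I((a-t)_+)\leq\tau_I((b-t)_+)$ into a statement about the norm of $a$ and $b$ in the quotient $A/I$.

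First I would record that under the stated hypothesis $\T(A)=\{\tau_I \mid I \text{ a closed two-sided ideal of } A\}$: the inclusion $\supseteq$ is the observation made just above the statement, that each $\tau_I$ is a lower semicontinuous trace, and $\subseteq$ is exactly the hypothesis. Consequently $a\smajtau b$ holds if and only if $\tau_I((a-t)_+)\leq\tau_I((b-t)_+)$ for every closed two-sided ideal $I\subseteq A$ and every $t\in[0,\infty)$.

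Next comes the one computation that needs care: for $c\in A_+$ and $t\geq 0$ one has $(c-t)_+\in I$ if and only if $\|\pi_I(c)\|\leq t$. Indeed, $\pi_I$ is a $*$-homomorphism, so it commutes with continuous functional calculus and $\pi_I((c-t)_+)=(\pi_I(c)-t)_+$; for a positive element of $A/I$ this vanishes precisely when $\pi_I(c)\leq t$, i.e. when $\|\pi_I(c)\|\leq t$. Since $\tau_I$ takes only the values $0$ on $I_+$ and $\infty$ off $I_+$, the inequality $\tau_I((a-t)_+)\leq\tau_I((b-t)_+)$ can fail only when $(b-t)_+\in I$ while $(a-t)_+\notin I$, that is, when $\|\pi_I(b)\|\leq t<\|\pi_I(a)\|$. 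Such a $t$ exists if and only if $\|\pi_I(a)\|>\|\pi_I(b)\|$ (take $t=\|\pi_I(b)\|$). Hence, for a fixed $I$, the family of inequalities over all $t\in[0,\infty)$ is equivalent to the single inequality $\|\pi_I(a)\|\leq\|\pi_I(b)\|$, and quantifying over all $I$ finishes the proof.

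There is no genuine obstacle here; the only step that warrants a line of justification is the identity $\pi_I((c-t)_+)=(\pi_I(c)-t)_+$ together with the resulting description of when $(c-t)_+$ lies in $I$, and this rests on nothing beyond functoriality of continuous functional calculus and the fact that the norm of a positive element equals the supremum of its spectrum.
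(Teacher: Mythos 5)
Your argument is correct and coincides in substance with the paper's proof: both reduce $a\smajtau b$ to the family of inequalities for the traces $\tau_I$, both use that $\pi_I$ commutes with functional calculus so that $(c-t)_+\in I$ if and only if $\|\pi_I(c)\|\leq t$, and both then observe that the inequality $\tau_I((a-t)_+)\leq\tau_I((b-t)_+)$ can fail only for $t$ in the range $[\|\pi_I(b)\|,\|\pi_I(a)\|)$, which is nonempty exactly when $\|\pi_I(a)\|>\|\pi_I(b)\|$. The only cosmetic difference is that you spell out the preliminary identification $\T(A)=\{\tau_I\}$ and the functional-calculus step a bit more explicitly than the paper does.
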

\begin{proof}
	Let $I$ be a closed two-sided ideal of $A$. Denote by $\pi_I\colon A\to A/I$ the quotient map. 
	Let $t\in [0,\infty)$. Consider the inequality $\tau_I((a-t)_+)\leq \tau_I((b-t)_+)$. The right side is $\infty$ for all $t<\|\pi_I(b)\|$. So in this case
	the inequality is trivially  valid. On the other hand, if $t\geq \|\pi_I(b)\|$, then the inequality is valid if and only if the left side is 0, i.e., if $(a-\|\pi(b)\|)_+\in I$.  This is equivalent to  $\|\pi_I(a)\|\leq \|\pi_I(b)\|$, as desired. 
\end{proof}

We will show below that in any C*-algebra  tracial submajorization is equivalent to submajorization (for positive elements) but this will entail first    elucidating  independently  some of the properties of both relations.

\begin{lemma}\label{hereditary}
	Let $B\subseteq A$ be a hereditary C*-subalgebra. Let $a,b\in B_+$.
	\begin{enumerate}[(i)]
		\item
		If $a\smaj b$ in $A$, then $a\smaj b$ in $B$.
		\item
		If $a\smajtau  b$ in $A$, then $a\smajtau b$ in $B$.
	\end{enumerate}
\end{lemma}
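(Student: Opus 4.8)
The plan is to prove both parts by passing between $A$ and the hereditary subalgebra $B$ via approximate units of $B$ and via the functional-calculus observation that $a$, $b$ and all the elements $(a-t)_+$, $(b-t)_+$ lie not merely in $B$ but in the hereditary subalgebras generated by $a$ and $b$ themselves. For part (i), suppose $a \smaj b$ in $A$, so for each $\epsilon>0$ we can write $\|a - \frac1N\sum_{i=1}^N d_i b d_i^*\| < \epsilon$ with $\|d_i\|\leq 1$ in $A$. Imitating the trick already used in the proof of Lemma \ref{orthosum}: multiply on the left and right by an element of an approximate unit of $\overline{aAa}$ (which absorbs $a$ up to $\epsilon$), and replace each $b$ by $b^{1/2}\,e\,b^{1/2}$ where $e$ runs through an approximate unit of $\overline{bAb}$; since $b\in B$ and $B$ is hereditary, both $\overline{aAa}\subseteq B$ and $\overline{bAb}\subseteq B$, so after these two replacements the new coefficients $d_i' $ satisfy $d_i' \in \overline{aAb}\subseteq B$, while the error has only grown to $O(\epsilon)$. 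Hence $a$ lies within $O(\epsilon)$ of a convex combination of $\{d'\,b'\,(d')^*\}$ with $d'\in B$, $b'\in B_+$, $\|d'\|\leq 1$, which is exactly what $a\smaj b$ in $B$ demands. Letting $\epsilon\to 0$ finishes part (i); the only point requiring a line of care is checking that the replacement $b\mapsto b^{1/2}eb^{1/2}$ still submajorizes $b$ and is submajorized by it, which follows from Lemma \ref{basicsmaj}(i) since $b^{1/2}eb^{1/2}\leq b$ and conversely $b^{1/2}eb^{1/2}\to b$.

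For part (ii), I would argue by relating $\T(B)$ to $\T(A)$. Every lower semicontinuous trace $\sigma$ on $B$ extends to a lower semicontinuous trace on $A$: this is a standard fact about hereditary subalgebras (one can extend $\sigma$ first to the ideal $\overline{ABA}$ generated by $B$ — where $B$ is full — by the formula $\tau(x) = \sup \sigma(e_\lambda x e_\lambda)$ for $(e_\lambda)$ an approximate unit of $B$, and then to all of $A$ by setting it equal to $\infty$ off that ideal, or invoking \cite{elliott-robert-santiago}). Denote such an extension by $\tilde\sigma\in\T(A)$, and note that $\tilde\sigma$ restricted to $B_+$ equals $\sigma$. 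Now if $a\smajtau b$ in $A$, then for every $\sigma\in\T(B)$ and every $t\in[0,\infty)$ we have $\sigma((a-t)_+) = \tilde\sigma((a-t)_+) \leq \tilde\sigma((b-t)_+) = \sigma((b-t)_+)$, using that $(a-t)_+,(b-t)_+\in B_+$ since $a,b\in B$ and $B$ is a C*-subalgebra (closed under continuous functional calculus vanishing at $0$). This is precisely $a\smajtau b$ in $B$.

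The main obstacle is the trace-extension step in part (ii): one must be sure that every l.s.c.\ trace on a hereditary subalgebra really does extend to an l.s.c.\ trace on the ambient algebra, and that the extension restricts back correctly on $B_+$ — in particular one should be careful that lower semicontinuity is preserved. This is where I would lean on the cited reference \cite{elliott-robert-santiago}; conceptually, l.s.c.\ traces on $B$ correspond via restriction to a sub-cone of l.s.c.\ traces on the ideal generated by $B$ (identifying $\T(B)\cong\T(\overline{ABA})$ when $B$ generates that ideal as a full hereditary subalgebra), and any trace on an ideal extends to the whole algebra by assigning the value $\infty$ outside the ideal. Everything else — the approximate-unit manipulations in part (i) and the functional-calculus bookkeeping in both parts — is routine and parallels arguments already carried out in Lemmas \ref{orthosum} and \ref{basicsmaj}.
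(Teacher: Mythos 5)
Your proposal is correct and follows essentially the same route as the paper: in (i) you cut the coefficients down by approximate units so that they land in the hereditary subalgebra $B$ (the paper uses a single approximate unit of $B$ on both sides, which is slightly cleaner bookkeeping than your $\overline{aAa}$/$\overline{bAb}$ version, but it is the same idea), and in (ii) you reduce to the fact that every lower semicontinuous trace on $B$ extends to one on $A$, which is exactly what the paper does (via the Cuntz--Pedersen relation, citing \cite{CuntzPedersen}).
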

\begin{proof}
	(i)	Let $(e_\lambda)_\lambda$ be an approximate unit of $B$ consisting of contractions.  Let $\epsilon>0$. 
	Say $d_1,\dots,d_N\in A$ are contractions such that 
	\[
	\Big\|a-\frac1 N\sum_{i=1}^N d_ibd_i^*\Big\|<\epsilon.\] 
	Call the left side  of the above inequality $\epsilon'$ and choose $\epsilon'<\epsilon''<\epsilon$. 
	We have
	\begin{align*}
\Big	\|a-\frac1 N\sum_{i=1}^N e_\lambda d_ibd_i^*e_\lambda\Big\|
	&\leq \|a-e_\lambda a e_\lambda\|+\Big\|e_\lambda 
	\Big(a-\frac1 N\sum_{i=1}^N d_ibd_i^*
	\Big)e_\lambda\Big\|\\
	&\leq \|a-e_\lambda a e_\lambda\|+\epsilon'.
	\end{align*}
Since $e_\lambda ae_\lambda\to a$, there exists $\lambda_0$ such that the left side is less than $\epsilon''$ 
	for all $\lambda\geq \lambda_0$.  Moreover, since $e_\lambda be_\lambda\to b$, there exists $\lambda_1$ such that
	\[
	\|a -\frac1 N\sum_{i=1}^N e_\lambda d_i(e_\lambda be_\lambda)d_i^*e_\lambda\|< \epsilon
	\]
	for all $\lambda\geq \lambda_1$. Notice that $e_\lambda d_ie_\lambda\in B$ for all $i$. Thus,  $a\smaj b$ in $B$.

	(ii) It suffices to show that every l.s.c. trace on $B$ extends to $A$. 
	Let us sketch the proof of this  known fact: Given positive elements  $e,f\in A_+$ let us write $e\precsim_{\mathrm{CP}}f$ if
	$e=\sum_{i=1}^\infty x_i^*x_i$ and $\sum_{i=1}^\infty x_ix_i^*\leq f$ for some $x_i\in A$, where the series are convergent in norm. This  transitive relation is studied in \cite{CuntzPedersen} and \cite{robert2}.   To define an extension of a trace $\tau$ on $B$ to $A$ we set    
	\[
	\tilde\tau(x)=\sup\{\tau(y)\mid  y\in B_+,\, y \precsim_{\mathrm{CP}} x\},
	\]
	for all $x\in A_+$. 
	Then $\tilde \tau$ is a l.s.c. trace on $A$ extending $\tau$. The proof of this claim may be found in the proof of \cite[Lemma 4.6]{CuntzPedersen}.
\end{proof}

Let $\mathcal K$ denote the C*-algebra of compact operators on a separable, infinite dimensional, Hilbert space.
We regard $A$ embedded in $A\otimes \mathcal K$ in the usual manner, i.e., by placing  the elements of $A$  in the upper left-corner of an infinite matrix whose entries are 0 everywhere else.

\begin{proposition}\label{smajstabilization}
	Let $a,b\in A_+$. Then $a\smaj b$ in $A$ if and only if 
	$a\maj b$ in  $(A\otimes \mathcal K)^\sim$ (i.e., in the unitization of the stabilization of $A$). 
\end{proposition}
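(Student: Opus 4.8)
The plan is to exploit the general principle that conjugation by contractions inside $A$ can be simulated by conjugation by unitaries in a corner of the stabilization, together with the hereditary subalgebra reduction of Lemma \ref{hereditary}(i). Write $\widetilde{B}$ for $(A\otimes\K)^\sim$, and view $A$ as the upper-left corner of $A\otimes\K$ via a fixed rank-one projection $e_{11}\in\K$, so $a,b\in e_{11}(A\otimes\K)e_{11}\subseteq\widetilde{B}$.

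First I would prove the forward implication. Suppose $a\smaj b$ in $A$, so for each $\epsilon>0$ there are contractions $d_1,\dots,d_N\in A$ with $\|a-\tfrac1N\sum_i d_ibd_i^*\|<\epsilon$. The key observation is that each summand $d_ibd_i^*$ is a convex combination of \emph{unitary} conjugates of $b$ once we pass to a large matrix amplification: identify $M_k(A)\subseteq A\otimes\K$ and recall the standard fact (the "averaging over the roots of unity'' or Choi--Effros-type trick) that for a contraction $d\in A$ one has $dbd^* = \lim_k \tfrac{1}{2^k}\sum_{j} v_j (b\oplus 0)v_j^*$ for suitable unitaries $v_j\in M_{2^k}(A)\subseteq\widetilde{B}$; more simply, $\begin{pmatrix} d & (1-dd^*)^{1/2}\\ -(1-d^*d)^{1/2} & d^*\end{pmatrix}$ is a unitary in $M_2(A)$ whose conjugate of $b\oplus 0$ has $dbd^*$ in the upper-left corner, and then compressing by $e_{11}$ — which is itself an average of two unitary conjugations in $M_2$ via $e_{11}=\tfrac12(1+(2e_{11}-1))$ and $2e_{11}-1$ unitary — expresses $dbd^*$ as a convex combination of at most two unitary conjugates of $b$ in $M_2(A)^\sim\subseteq\widetilde{B}$. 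Doing this for each $i$ and averaging shows $a$ lies within $\epsilon$ of a convex combination of unitary conjugates of $b$ in $\widetilde{B}$; letting $\epsilon\to0$ gives $a\maj b$ in $\widetilde{B}$.

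For the converse, suppose $a\maj b$ in $\widetilde{B}$. Since unitaries are contractions and $b=e_{11}be_{11}$, every $ubu^*$ with $u\in\U(\widetilde{B})$ is a contraction conjugate of $b$, so $a\smaj b$ in $\widetilde{B}=(A\otimes\K)^\sim$, and hence, since $a,b\ge0$ lie in the non-unital ideal $A\otimes\K$ and the contractions $d$ may be cut down to $A\otimes\K$ by an approximate unit as in the proof of Lemma \ref{orthosum}, $a\smaj b$ in $A\otimes\K$. Now $A = e_{11}(A\otimes\K)e_{11}$ is a hereditary C*-subalgebra of $A\otimes\K$ containing $a$ and $b$, so Lemma \ref{hereditary}(i) applies directly to conclude $a\smaj b$ in $A$.

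The main obstacle is the bookkeeping in the forward direction: one must be careful that the unitaries produced live in $\widetilde{B}$ rather than in ever-larger matrix algebras with no common home, and that the compression back to the corner $e_{11}$ is realized by genuine unitary conjugations (and averages thereof) inside $\widetilde{B}$. Once one fixes a single rank-one projection $e_{11}$ and works throughout in $M_2(A)^\sim$, or equivalently in $(A\otimes\K)^\sim$ using that $2e_{11}-1$ and the $2\times2$ dilation unitary both sit in $\widetilde{B}$, this is routine; the hereditary-subalgebra reduction of Lemma \ref{hereditary}(i) handles the harder-looking converse with essentially no work.
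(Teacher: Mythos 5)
Your converse is essentially the paper's argument: $A$ is a hereditary C*-subalgebra of $(A\otimes\K)^\sim$, so Lemma \ref{hereditary}(i) immediately downgrades $a\maj b$ there to $a\smaj b$ in $A$; your intermediate pass through $A\otimes\K$ is unnecessary but harmless.

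The forward direction, however, has a genuine gap in the ``more simply'' step. Compression by $e_{11}$ is \emph{not} an average of unitary conjugations. Writing $w=2e_{11}-1$, averaging over $\{1,w\}$ gives the \emph{pinching}
\[
x\mapsto \tfrac12(x+wxw)=e_{11}xe_{11}+(1-e_{11})x(1-e_{11}),
\]
which keeps the complementary corner. Applied to $u(b\oplus 0)u^*$, with $u$ the $2\times 2$ dilation unitary of the contraction $d$, this yields $dbd^*\oplus(1-d^*d)^{1/2}b(1-d^*d)^{1/2}$, not $dbd^*\oplus 0$; the second diagonal entry has no reason to be small and cannot be dropped. (If your claim held, then for any bounded trace $\tau_0$ on $A$ the trace $\tau_0\otimes\mathrm{Tr}_2$ on $M_2(A)^\sim$ would force $\tau_0(dbd^*)=\tau_0(b)$, which fails whenever $\tau_0(b(1-d^*d))>0$.) So what you actually prove is that the pinching of $u(b\oplus 0)u^*$ is majorized by $b$ in $M_2(A)^\sim$, a strictly weaker statement. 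The paper avoids this by letting the matrix size grow: for $a\leq b$ it writes $b=x^*x$ with $x\in A\otimes\K$ a column vector carrying $a^{1/2}$ in one slot and $(b-a)^{1/2}$ in another; averaging $xx^*$ over $n$ different placements of $(b-a)^{1/2}$ makes the leftover diagonal block (norm $O(1/n)$) and the off-diagonal cross terms (norm $O(1/\sqrt{n})$) vanish, leaving $a$ in the $e_{11}$ corner, and then \cite[Lemma 4.3.3]{radius} (approximate unitary equivalence of $x^*x$ and $xx^*$ in $(A\otimes\K)^\sim$) finishes; the case $a=dbd^*$ reduces to this via $x=db^{1/2}$. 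Your ``averaging over roots of unity'' gesture is aimed at exactly this growing-$n$ mechanism, but as written it would again only produce a pinching, and a correct proof must carry out the estimates above.
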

\begin{proof}
	Suppose that  $a\maj b$ in $(A\otimes \mathcal K)^\sim$. Since $A$
	is a hereditary C*-subalgebra of $(A\otimes \mathcal K)^\sim$, we have $a\smaj b$
	in $A$ by Lemma \ref{hereditary} (i).

	Let us prove the opposite implication. We consider first the case when $a\leq b$.	Let $n\in \N$. We have
	\[
	b=
	\begin{pmatrix}
	a^{1/2}  &\cdots  &(b-a)^{1/2}
	\end{pmatrix}
	\begin{pmatrix}
	a^{1/2}\\
	\vdots\\
	(b-a)^{1/2}
	\end{pmatrix}
	\]	
	and 	
	\[
	\begin{pmatrix}
	a^{1/2}\\
	\vdots\\
	(b-a)^{1/2}
	\end{pmatrix}
	\begin{pmatrix}
	a^{1/2}  &\cdots &(b-a)^{1/2}
	\end{pmatrix}
	=		\begin{pmatrix}
	a & \cdots & a^{1/2}(b-a)^{1/2}\\
	\vdots & &\vdots\\
	(b-a)^{1/2}a^{1/2}& \cdots& b-a
	\end{pmatrix}\in M_n(A),
	\]	
	where the omitted entries are  all zeros.
	By changing $n$  and averaging we find that for any $\epsilon>0$ we can choose $x_1,\dots,x_N\in A\otimes \mathcal K$ such that
	\begin{equation*}
	 \|a-\frac 1 N\sum_{i=1}^Nx_ix_i^*\|<\epsilon \hbox{ and }  b=x_i^*x_i\hbox{ for all }i.
	\end{equation*}
 But for all $x\in A\otimes\mathcal K$ the elements
	$x^*x$ and $xx^*$ are approximately unitarily equivalent in $(A\otimes\mathcal K)^\sim$ (\cite[Lemma 4.3.3]{radius}). This shows that $a\maj b$ in $(A\otimes\mathcal K)^\sim$, as desired.
	
	Suppose now that $a=dbd^*$, with $\|d\|\leq 1$. Let $x=db^{1/2}$. Then $a=xx^*$ and $x^*x\leq b$. We have already shown that $x^*x\maj b$ in $(A\otimes\mathcal K)^\sim$.
	But, as remarked above,  $x^*x$ and $xx^*$ are approximately unitarily equivalent in 
	$(A\otimes\mathcal K)^\sim$. So $a=x^*x\maj xx^*\maj b$.
	
	Consider the general case. Suppose that $a\smaj b$. Then $a$ is a limit of convex combinations of elements of the form $dbd^*$, with $\|d\|\leq 1$. We have already shown that
	each of these elements is majorized by $b$ in $(A\otimes\mathcal K)^\sim$. It follows that $a\maj b$ in $(A\otimes\mathcal K)^\sim$, as desired.
\end{proof}

\begin{proposition}\label{expectation}
	Let $E\colon A\to A$ be a positive contractive map that is also trace decreasing, i.e.,
	$\tau(E(a))\leq \tau (a)$ for all $\tau\in \T(A)$ and all $a \in A_+$. 
	Then $E(a)\smajtau a$ for all $a\in A_+$.
\end{proposition}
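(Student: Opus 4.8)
The plan is to verify the defining inequalities of $E(a)\smajtau a$ directly: fix $\tau\in\T(A)$ and $t\in[0,\infty)$ and show $\tau\big((E(a)-t)_+\big)\leq\tau\big((a-t)_+\big)$. The key observation is that $E(a)-t$ sits below $E\big((a-t)_+\big)$ in the operator order, so its positive part is controlled by $E\big((a-t)_+\big)$ in trace, and then the trace-decreasing hypothesis, applied to $(a-t)_+$, finishes the job.

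Concretely, I would proceed as follows. First, from the scalar inequality $x\leq (x-t)_+ + t$ (valid for $x,t\geq 0$) one gets the operator inequality $a\leq (a-t)_+ + t\cdot 1$ in $A$ (or in $\widetilde A$ if $A$ is non-unital; note $(a-t)_+\in A$ regardless). Applying $E$ and using that $E$ is positive, linear and contractive, so $0\leq E(1)\leq 1$, yields $E(a)\leq E\big((a-t)_+\big) + t\cdot 1$, i.e.
\[
\big(E(a)-t\big)\ =\ E(a)-t\cdot 1\ \leq\ E\big((a-t)_+\big)\ =:\ c,\qquad c\in A_+.
\]
Second, since $c\geq 0$ and $E(a)-t\cdot 1\leq c$, Lemma~\ref{basicsmaj}(i) gives $(E(a)-t)_+\smaj c_+=c$. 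Third, I would record the elementary fact that $x\smaj y$ in $A_+$ implies $\tau(x)\leq\tau(y)$ for every $\tau\in\T(A)$: for a single convex combination $\tfrac1N\sum_i d_i y d_i^*$ with $\|d_i\|\leq 1$ one has $\tau\big(\tfrac1N\sum_i d_i y d_i^*\big)=\tfrac1N\sum_i\tau\big(y^{1/2}d_i^*d_iy^{1/2}\big)\leq\tau(y)$ because $d_i^*d_i\leq 1$ and $\tau$ is monotone on $A_+$, and the general case follows from lower semicontinuity of $\tau$. Applying this to $x=(E(a)-t)_+$ and $y=c$ gives $\tau\big((E(a)-t)_+\big)\leq\tau(c)=\tau\big(E((a-t)_+)\big)$. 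Finally, the trace-decreasing hypothesis applied to $(a-t)_+\in A_+$ gives $\tau\big(E((a-t)_+)\big)\leq\tau\big((a-t)_+\big)$, and chaining the inequalities over all $\tau$ and $t$ yields $E(a)\smajtau a$.

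The only step with any subtlety is the passage from the operator domination $E(a)-t\cdot 1\leq c$ to the tracial inequality $\tau\big((E(a)-t)_+\big)\leq\tau(c)$: a positive element dominating a selfadjoint element need not dominate its positive part as operators, so one cannot simply invoke monotonicity of $\tau$. The clean route is the one above (Lemma~\ref{basicsmaj}(i) together with trace-monotonicity under $\smaj$); alternatively one can pass to $A^{**}$, extend $\tau$ to a normal lower semicontinuous trace, compress $E(a)-t\cdot 1\leq c$ by the spectral projection $p=\chi_{(0,\infty)}(E(a)-t)$, and use $\tau(pcp)=\tau\big(c^{1/2}pc^{1/2}\big)\leq\tau(c)$. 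Everything else is routine functional calculus, and the non-unital case is handled by passing to the unitization.
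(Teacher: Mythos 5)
Your argument is correct and takes essentially the same route as the paper: both proofs begin from the operator inequality $E(a)-t\cdot 1\leq E((a-t)_+)$ and finish by applying the trace-decreasing hypothesis to $(a-t)_+$. The only difference is cosmetic: you pass from that inequality to the tracial estimate $\tau((E(a)-t)_+)\leq\tau(E((a-t)_+))$ by citing Lemma~\ref{basicsmaj}(i) together with a short trace-monotonicity observation for $\smaj$ (which you correctly prove directly, avoiding circular reliance on Proposition~\ref{easyimplication}), whereas the paper compresses by an $\epsilon$-cut-down $c^{1/2}$ with $c\in C^*(E(a))$ and lets $\epsilon\to 0$ using lower semicontinuity --- which is precisely the mechanism inside the proof of Lemma~\ref{basicsmaj}(i), so the two are the same technique.
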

\begin{proof}
	Let $t\in [0,\infty)$ and $\tau\in \T(A)$. Let $\epsilon>0$. 
	Since $E$ is positive and contractive we have that 
	\[
	E(a)-t\cdot 1\leq E(a-t\cdot 1)\leq E((a-t)_+).
	\] Let $c\in C^*(E(a))$ be a positive contraction   such that $ (E(a)-t)c=(E(a)-t-\epsilon)_+$. Then 
	\[
	(E(a)-t-\epsilon)_+\leq c^{\frac 1 2}E((a-t)_+)c^{\frac 1 2}.
	\]
Evaluating both sides on  $\tau$  and using that $E$ is trace decreasing we get that
	\[
	\tau((E(a)-t-\epsilon)_+))\leq \tau(E((a-t)_+))\leq \tau((a-t)_+).
	\]
	Letting $\epsilon \to 0$ and using that $\tau$ is lower semicontinuous
	we get the desired inequality.
\end{proof}

\begin{proposition}\label{easyimplication}
	Let $a,b\in A_+$.
		If $a\smaj b$,  then $a\smajtau b$.
\end{proposition}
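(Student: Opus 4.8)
The plan is to reduce the statement to the single assertion that $c\smaj b$ implies $\tau(c)\leq\tau(b)$ for all $\tau\in\T(A)$, where $c,b\in A_+$. Granting this, suppose $a\smaj b$. By Lemma~\ref{basicsmaj}(iii) we have $(a-t)_+\smaj (b-t)_+$ for every $t\in[0,\infty)$, and applying the reduced assertion with $c=(a-t)_+$ gives $\tau((a-t)_+)\leq\tau((b-t)_+)$ for all $\tau\in\T(A)$ and all $t\in[0,\infty)$; this is precisely $a\smajtau b$.

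To prove the reduced assertion I would first record that every $\tau\in\T(A)$ is monotone on $A_+$: if $x\leq y$ then $\tau(y)=\tau(x)+\tau(y-x)\geq\tau(x)$, using additivity of $\tau$ and the fact that $\tau$ takes values in $[0,\infty]$. Next, for any contraction $d'\in A$ one writes $d'bd'^*=(d'b^{1/2})(d'b^{1/2})^*$, so the trace identity gives $\tau(d'bd'^*)=\tau\big((d'b^{1/2})^*(d'b^{1/2})\big)=\tau(b^{1/2}d'^*d'b^{1/2})$; since $d'^*d'\leq 1$ we have $b^{1/2}d'^*d'b^{1/2}\leq b$, and monotonicity yields $\tau(d'bd'^*)\leq\tau(b)$. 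Using that $\tau$ is additive and $\R_+$-homogeneous, it follows that $\tau\big(\tfrac1N\sum_{i=1}^N d_ibd_i^*\big)\leq\tau(b)$ for any contractions $d_1,\dots,d_N\in A$. Finally, if $c\smaj b$, choose convex combinations $c_n=\tfrac1{N_n}\sum_i d_{i,n}bd_{i,n}^*$ with $c_n\to c$ in norm; lower semicontinuity of $\tau$ then gives $\tau(c)\leq\liminf_n\tau(c_n)\leq\tau(b)$.

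There is essentially no serious obstacle; the only point requiring care is that traces are permitted to be $[0,\infty]$-valued and merely lower semicontinuous, so one must check that the monotonicity step and the passage to the limit remain valid in this generality, which they do. (Alternatively, one can phrase the argument through Proposition~\ref{expectation}: each map $E=\tfrac1N\sum_i d_i(\,\cdot\,)d_i^*$ is positive, contractive, and trace decreasing by the computation above, hence $E(b)\smajtau b$; and the relation $\smajtau$ is closed under norm limits because the functionals $c\mapsto\tau((c-t)_+)$ are lower semicontinuous, so $a\smajtau b$ follows. I would likely include the direct argument as it is shortest, and remark on this alternative.)
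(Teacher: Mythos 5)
Your proof is correct, and it takes a modestly different route than the paper's. The paper treats all cut-offs $(\cdot-t)_+$ at once by factoring the convex combination as a positive, contractive, trace-decreasing map $E(y)=\sum_i t_i d_i y d_i^*$ and invoking Proposition~\ref{expectation}, whose own proof carries out the functional-calculus manipulation needed to compare $(E(b)-t-\epsilon)_+$ with $E((b-t)_+)$; the limit step for general $a\smaj b$ then uses lower semicontinuity exactly as you do. You instead first discharge the $t$-dependence by citing Lemma~\ref{basicsmaj}(iii), which says $a\smaj b\Rightarrow (a-t)_+\smaj (b-t)_+$, and then prove only the $t=0$ statement (that $\smaj$ is trace-decreasing) by a completely elementary computation from the trace identity $\tau(x^*x)=\tau(xx^*)$, monotonicity, and lower semicontinuity. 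The two arguments have essentially the same mathematical content --- the functional-calculus work you avoid by citing Lemma~\ref{basicsmaj}(iii) is precisely the work hidden inside Proposition~\ref{expectation} --- but yours isolates the key inequality $\tau(dbd^*)\leq\tau(b)$ in a way that is arguably more transparent and more self-contained. You even note the Proposition~\ref{expectation} route as an alternative, which is in fact the one the paper takes.
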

\begin{proof}
 First suppose that $a$ is exactly a convex combination of elements of the form $dbd^*$, with $\|d\|\leq 1$. Say $a=\sum_{i=1}^n t_i d_ibd_i^*$,
where $\|d_i\|\leq 1$ for all $i$, $0\leq t_i\leq 1$ for all $i$, and $\sum_{i=1}^n t_i=1$.
	Let $E\colon A\to A$ be defined as $E(y)=\sum_{i=1}^n t_i d_iyd_i^*$ for all $y\in A$. Then $E$ is positive, contractive, and trace decreasing. By  Proposition \ref{expectation}, 
	$a=E(b)\smajtau b$ as desired.
	
	Suppose now that $a,b\in A_+$ are arbitrary elements such that $a\smaj b$. Let $a_n\to a$ where $a_n$ is a finite convex combination of elements of the form
	$dbd^*$, with $\|d\|\leq 1$. Then $a_n\smajtau b$ for all $n$ by the previous case. Let $\tau\in \T(A)$
	and $t\in [0,\infty)$. Then  $\tau((a_n-t)_+)\leq \tau((b-t)_+)$ for all $n$ and $(a_n-t)_+\to (a-t)_+$. By the lower semicontinuity  $\tau$,  
	\[
	\tau((a-t)_+)\leq \liminf_n \tau((a_n-t)_+)\leq \tau((b-t)_+),\] 
	as desired. (What we have shown is that the set of elements tracially submajorized by $b$ is closed.)
\end{proof}

\section{Von Neumann algebra case}\label{vNcase}
In this section we work exclusively in the setting of von Neumann algebras. 
The main results of this section, Propositions \ref{vNsubmajorization} and \ref{vNmajorization},  characterize submajorization and majorization in a von Neumann algebra in terms of tracial submajorization. They are stepping stones towards proving the same results for all C*-algebras. (We  take-up this task in the next section.)

Throughout this section $M$ denotes a von Neumann algebra. We also fix the following notations: The center of $M$ is denoted by $Z$.  Elements of $Z$
are often regarded as continuous functions on $\widehat Z$ (the spectrum of $Z$). Given $a\in M$ we denote by $c_a\in Z$  the central carrier or central support projection of $a$.

\begin{lemma}\label{centrallambda}
Let $a,b\in M_+$ be  such that $a\smajtau b$. Let $\lambda\in Z_+$.
	The following are true:
	\begin{enumerate}[(i)]
		\item
		$\lambda a\smajtau \lambda b$	
		\item
		$(a-\lambda)_+\smajtau (b-\lambda)_+$.
		\item
		$a+\lambda\smajtau b+\lambda$
	\end{enumerate}	
	
	\begin{proof}
		(i) Let $\tau\in \T(M)$ and $t\in [0,\infty)$. We must show that 
		$\tau((\lambda a-t)_+)\leq \tau((\lambda b-t)_+)$.
		Suppose first that $\lambda=e$  is a central projection. 
		Then $\tau((ea-t)_+)=\tau(e(a-t)_+)$, and since $x\mapsto \tau(e\cdot x)$ is a trace in $\T(M)$, $\tau(e(a-t)_+)\leq \tau(e(b-t)_+)$. So $\tau((ea-t)_+)\leq \tau((eb-t)_+)$, as desired.
		
		Suppose now that $\lambda$ has finite spectrum. 
		Then   $\lambda=\sum_{i=1}^n \alpha_ie_i$, where $e_1,\ldots,e_n$ are pairwise orthogonal central projections and where $\alpha_1,\ldots,\alpha_n>0$
		are  positive scalars. We have
		\begin{align*}
			\tau((\lambda a-t)_+)&=\tau\Big(\Big(\sum_{i=1}^n\alpha_ie_ia-t\Big)_+\Big)\\
			&=\tau\Big(\sum_{i=1}^n\alpha_i\Big(e_ia-\frac{t}{\alpha_i}\Big)_+\Big) 
			\leq \tau\Big(\sum_{i=1}^n\alpha_i\Big(e_ib-\frac{t}{\alpha_i}\Big)_+\Big)
			=\tau((\lambda b -t)_+).
		\end{align*}

		Finally, suppose that $\lambda$ is an arbitrary positive central element. Choose an increasing sequence of positive  central elements $(\lambda_n)_n$ 
		each with finite spectrum and  such that $\lambda_n \nearrow \lambda$ in norm. We have already proven that 
		$
		\tau((\lambda_na-t)_+)\leq \tau((\lambda_nb-t)_+)
		$
		for all $n$. Observe that $(\lambda_na-t)_+\nearrow (\lambda a-t)_+$ and 
		$(\lambda_nb-t)_+\nearrow (\lambda b-t)_+$. So passing to the limit as $n\to \infty$ and using that $\tau$ is l.s.c.
		we get that  $\tau((\lambda a-t)_+)\leq \tau((\lambda b-t)_+)$, as desired.
		
		(ii) It suffices to show that $\tau((a-\lambda)_+)\leq \tau((b-\lambda)_+)$ for all $\tau\in \T(M)$. Choose a decreasing sequence of positive central elements $(\lambda_n)_n$ with finite spectrum and such that $\lambda_n\searrow \lambda$ in norm. Then $(a-\lambda_n)_+\nearrow (a-\lambda)_+$ and $(b-\lambda_n)_+\nearrow (b-\lambda)_+$. So, arguing as in (i), the proof is reduced to the case of $\lambda$ with finite spectrum.

		Say $\lambda=\sum_{i=1}^n \alpha_ie_i$ where $e_1,\ldots,e_n$ are pairwise orthogonal central projections adding up to 1 and $\alpha_i\geq 0$ are scalars.
		Then 
		\[
		\tau(e_i(a-\lambda)_+)=\tau(e_i(a-\alpha_i)_+)\leq \tau(e_i(b-\alpha_i)_+)=\tau(e_i(b-\lambda)_+),
		\]
		for all $i$.	Adding over all  $i$  we get the result.
		
		(iii) We can reduce the proof to the case of a $\lambda$ with finite spectrum by choosing an increasing sequence $(\lambda_n)_n$ such that $\lambda_n\nearrow \lambda$ in norm and arguing as in (i).  Passing to  central cut-downs $e_iM$, where $e_1,\ldots,e_n$ are central projections adding up to $1$, we are further reduced to the case that $\lambda$ is a nonnegative scalar.
		So assume that this is the case. Then $(a+\lambda-t)_+=a+(\lambda-t)$ if $t\leq \lambda$ and 
		$(a+\lambda-t)_+=(a-(t-\lambda))_+$ otherwise. This calculation shows that $a\smajtau b$ implies that $a+\lambda\smajtau b+\lambda$.
	\end{proof}	
\end{lemma}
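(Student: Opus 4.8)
The plan is to establish each of (i), (ii), (iii) by the same two-step reduction. First, one reduces to the case in which the central element $\lambda$ has finite spectrum, i.e. $\lambda=\sum_{i=1}^n\alpha_ie_i$ with $e_1,\dots,e_n$ pairwise orthogonal central projections and $\alpha_i\ge 0$ scalars; this is done by approximating $\lambda$ monotonically in norm by such elements and invoking lower semicontinuity of traces, which for a norm-increasing sequence $c_n\nearrow c$ yields $\tau(c)=\sup_n\tau(c_n)$ (combine lower semicontinuity with monotonicity of $\tau$). Second, for $\lambda$ of finite spectrum one splits along the central projections $e_i$ and uses that, for a central projection $e$, the map $x\mapsto\tau(ex)$ is again a lower semicontinuous trace on $M$ (the tracial identity following from $\tau\big((e^{1/2}x)^*(e^{1/2}x)\big)=\tau\big((e^{1/2}x)(e^{1/2}x)^*\big)$ together with centrality of $e$); this lets one reduce every inequality to the hypothesis $a\smajtau b$ tested against these rescaled traces at scalar cut-off values.

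Concretely, for (i) I would first record the functional-calculus identity $(ea-t)_+=e(a-t)_+$ for a central projection $e$ and $t\ge 0$, which with the previous paragraph handles $\lambda=e$; then for $\lambda=\sum_i\alpha_ie_i$ with $\alpha_i>0$, orthogonality gives $(\lambda a-t)_+=\sum_i\alpha_ie_i(a-t/\alpha_i)_+$, so applying $\tau$, using additivity, and applying the projection case to each $\tau(e_i\,\cdot\,)$ at cut-off $t/\alpha_i$ yields $\tau((\lambda a-t)_+)\le\tau((\lambda b-t)_+)$; finally, for general $\lambda\in Z_+$ take finite-spectrum $\lambda_n\nearrow\lambda$ in norm, so $\lambda_na\nearrow\lambda a$ and $\lambda_nb\nearrow\lambda b$, hence $(\lambda_na-t)_+\nearrow(\lambda a-t)_+$ and $(\lambda_nb-t)_+\nearrow(\lambda b-t)_+$, and pass the inequality $\tau((\lambda_na-t)_+)\le\tau((\lambda_nb-t)_+)$ to the limit by lower semicontinuity.

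For (ii), the identity $\big((a-\lambda)_+-t\big)_+=(a-\lambda-t)_+$ for $t\ge 0$ (valid since the central $\lambda$ commutes with $a$), together with $\lambda+t\cdot 1\in Z_+$, shows it suffices to prove $\tau((a-\lambda)_+)\le\tau((b-\lambda)_+)$ for all $\tau\in\T(M)$ and all $\lambda\in Z_+$. Approximating $\lambda$ from above by finite-spectrum central elements $\lambda_n\searrow\lambda$ makes $(a-\lambda_n)_+\nearrow(a-\lambda)_+$ and $(b-\lambda_n)_+\nearrow(b-\lambda)_+$, so by lower semicontinuity one reduces to $\lambda=\sum_i\alpha_ie_i$ with $\sum_ie_i=1$ and $\alpha_i\ge 0$; then $(a-\lambda)_+=\sum_ie_i(a-\alpha_i)_+$, and summing the inequalities $\tau(e_i(a-\alpha_i)_+)\le\tau(e_i(b-\alpha_i)_+)$ — each the hypothesis at scalar cut-off $\alpha_i$ for the trace $\tau(e_i\,\cdot\,)$ — gives the claim. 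For (iii), monotone approximation from below reduces again to finite-spectrum $\lambda$, and passing to the central cut-downs $e_iM$ — using that $a\smajtau b$ descends to $e_ia\smajtau e_ib$ there, since traces on $e_iM$ pull back along the surjective $*$-homomorphism $x\mapsto e_ix$ — reduces matters to $\lambda=s\cdot 1$ a nonnegative scalar. There, for cut-offs $t\ge s$ one has $(a+s-t)_+=(a-(t-s))_+$ and the inequality is immediate from $a\smajtau b$, while for $0\le t<s$ one has $(a+s-t)_+=a+(s-t)\cdot 1$ and the inequality reduces to $\tau(a)\le\tau(b)$, which is the $t=0$ instance of $a\smajtau b$ (and holds trivially as $\infty\le\infty$ when $\tau(1)=\infty$).

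I do not expect a genuine obstacle; the whole argument is the combination of the two reductions above with elementary identities for $x\mapsto(x-t)_+$ on commuting selfadjoint elements. The only points demanding a little care are the additivity/homogeneity bookkeeping when $\tau$ attains the value $\infty$ (relevant in the scalar case of (iii), where one adds $(s-t)\cdot 1$), and the verification that tracial submajorization is inherited by central cut-downs, which holds because $*$-homomorphisms pull traces back to traces and $x\mapsto ex$ is a $*$-homomorphism onto $eM$ for central $e$.
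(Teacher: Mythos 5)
Your proposal is correct and follows essentially the same route as the paper's proof: the same two-stage reduction (general $\lambda\in Z_+$ to finite spectrum to projections or scalars) via monotone approximation in norm and lower semicontinuity, the same functional-calculus identities under central orthogonality, and the same use of $\tau(e\,\cdot\,)$ as a trace. You supply a few details the paper leaves implicit (the identity $((a-\lambda)_+-t)_+=(a-\lambda-t)_+$ explaining why (ii) reduces to $t=0$, the descent of tracial submajorization to central cut-downs in (iii), and the $\tau(1)=\infty$ case), but the method is the same.
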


\begin{proposition}\label{theform}
	Let $a,b\in M$ be positive elements with finite spectrum. 
	Then 
	\begin{equation}\label{formab}
		a=\sum_{i=1}^n \alpha_iP_i,\quad
		b=\sum_{i=1}^{n} \beta_iQ_i
	\end{equation}
	for some $(P_i)_{i=1}^n$ and $(Q_i)_{i=1}^n$, pairwise
	orthogonal projections in $M$ adding up to $1$  such that $P_i\sim Q_i$ for all $i$,
	and some decreasing sequences of positive central elements $(\alpha_i)_{i=1}^n$ and  $(\beta_i)_{i=1}^n$  
	such that $\|\alpha_i\|\leq \|a\|$ and $\|\beta_i\|\leq \|b\|$ for all $i$.
\end{proposition}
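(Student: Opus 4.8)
The plan is to induct on the number $m$ of distinct eigenvalues of $a$. Write the spectral decompositions $a=\sum_{j=1}^{m}s_jE_j$ and $b=\sum_{k=1}^{l}t_kF_k$ with $s_1>\cdots>s_m\geq 0$, $t_1>\cdots>t_l\geq 0$, and $(E_j)$, $(F_k)$ pairwise orthogonal projections summing to $1$. If $m=1$, then $a=s_1\cdot 1$ and one takes $n=l$, $P_i=Q_i=F_i$, $\alpha_i=s_1$, $\beta_i=t_i$. For the inductive step, apply the comparison theorem for projections to $E_1$ and $F_1$ to get a central projection $z$ with $zE_1\precsim zF_1$ and $(1-z)F_1\precsim (1-z)E_1$; since equivalences over orthogonal central summands add up, it suffices to treat $zM$ and $(1-z)M$ separately, and by symmetry we may work inside $zM$ assuming $zE_1\precsim zF_1$ (and abbreviate the cut‑downs again by $M$, $E_1$, etc.). Choose $Q_1\leq F_1$ with $Q_1\sim E_1=:P_1$, strip off the summand $s_1P_1$ of $a$ and $t_1Q_1$ of $b$, and consider the leftovers $a'=a-s_1P_1=\sum_{j\geq 2}s_jE_j\in(1-P_1)M(1-P_1)$, which has at most $m-1$ distinct eigenvalues and norm at most $\|a\|$, and $b'=b-t_1Q_1\in(1-Q_1)M(1-Q_1)$, of norm at most $\|b\|$. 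The inductive hypothesis applies to $a'$ and $b'$ once they sit inside a common von Neumann algebra, and after conjugating $b'$ by a partial isometry this reduces to the single equivalence $1-P_1\sim 1-Q_1$.

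When $M$ is finite this is automatic, since the center‑valued trace $T$ satisfies $T(P_1)=T(Q_1)$, hence $T(1-P_1)=T(1-Q_1)$; the induction then closes by applying the hypothesis to the transported pair inside $(1-P_1)M(1-P_1)$, conjugating back, prepending the summands $s_1P_1$, $t_1Q_1$, and padding the two lists arising from $zM$ and $(1-z)M$ to a common length $n$ with zero projections. The resulting central coefficients are decreasing, because the new leading coefficients $s_1$, $t_1$ dominate the inductively produced ones ($\|a'\|<s_1$ and $\|b'\|\leq t_1$), and they are bounded by $\|a\|$, $\|b\|$; re‑expressing the central coefficients of a corner as central elements of $M$ while preserving monotonicity is routine.

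For a general $M$, decompose $M$ by a central projection into its finite part and its properly infinite part; the finite part is done. On the properly infinite part the equivalence $1-P_1\sim 1-Q_1$ can genuinely fail — Murray–von Neumann equivalence does not pass to complements in properly infinite algebras — and overcoming this is the main obstacle. The remedy is to peel differently: split every $E_j$ and $F_k$ along the central projection separating its finite and properly infinite parts, collect the finite parts of the $E_j$'s into one projection $G$, which is finite (a finite sum of finite projections is finite), so that $aG$ lives in the finite corner $GMG$ and is handled by the finite case, while on $1-G$ every spectral projection of $a$ is properly infinite; do the same with $H$, $1-H$ for $b$. For the properly infinite portions one uses that a properly infinite projection is equivalent to its central support, so after passing to the atoms of the finite Boolean algebra generated by all the relevant central supports and finite/infinite‑separating central projections, each spectral projection in play is equivalent to the ambient atom; the needed equivalences $P_i\sim Q_i$ then reduce to matching central supports and equalizing multiplicities by splitting properly infinite projections (or carving finite subprojections out of them), which is always possible since such a projection absorbs copies of itself. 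The delicate point is reconciling, over a given atom, the decomposition coming from the finite corner with the one coming from the properly infinite part, since the auxiliary projections $G$ and $H$ need not agree there; but this is controlled throughout by the comparison theorem, and gluing over all atoms and over $zM$, $(1-z)M$, together with the usual padding to a common length, finishes the proof.
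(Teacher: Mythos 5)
Your finite‑case argument is essentially the paper's Case~1: peel off the top spectral projection of $a$, find an equivalent subprojection $F_1'\le F_1$, use finiteness of $E_1$ to upgrade $E_1\sim F_1'$ to a unitary conjugation (via $1-E_1\sim 1-F_1'$), peel, and recurse. This part is fine, including the observations that $\|a'\|<s_1$ and $\|b'\|\le t_1$ keep the coefficients decreasing and bounded.

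The properly infinite case, however, has two genuine gaps. First, after cutting to a central atom and collecting the finite $E_j$'s into $G$ and the finite $F_k$'s into $H$, you cannot apply the finite case in $GMG$: $a$ commutes with $G$ but $b$ does not, and similarly $a$ does not commute with $H$. There is no corner in which both $a$ and $b$ live, so the problem does not split. You flag this as ``the delicate point'' and assert it is ``controlled by the comparison theorem,'' but the comparison theorem only tells you how $G$ and $H$ compare as projections; it does not reconcile the two decompositions, and indeed the whole point of the paper's argument is to avoid ever needing such a reconciliation. Second, the step ``a properly infinite projection is equivalent to its central support'' is not true in a general von Neumann algebra — it requires countable decomposability (two properly infinite projections with equal central carrier are equivalent when they are countably decomposable, cf.\ \cite[Theorem 6.3.4]{KadRing}, but not otherwise, e.g.\ in $B(H)$ for nonseparable $H$). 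The paper makes no separability hypothesis and, in Case~2, carefully avoids any such claim by deducing $1-E_1\sim 1$ directly from $E_1\precsim 1-E_1$ and proper infiniteness, and in the remaining subcase $1-E_1\precsim E_1$ it uses the tailored decomposition $E_1=E_1'+E_1''$, $F_1=F_1'+F_1''$ with $E_1'\sim F_1'\sim 1$, $E_1''\sim 1-F_1$, $F_1''\sim 1-E_1$ to still produce a unitary. Processing one spectral projection at a time — rather than splitting all of them by their finite/properly infinite central parts simultaneously — is what makes the induction close; your global splitting is where the argument breaks down. To repair the proposal you would need to abandon the $G$/$H$ device and instead imitate the paper's local treatment of $E_1$, including the Case~2(b) decomposition.
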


\begin{proof}
	Since $a$ and $b$ have finite spectrum,
	we have decompositions
	\begin{equation}\label{startform}
		a = \sum_{i=1}^l \mu_i E_i,\quad
		b = \sum_{j=1}^m \nu_j F_j
	\end{equation}
	where $(E_i)_{i=1}^l$ and $(F_j)_{j=1}^m$ are pairwise orthogonal projections adding  up to 1, 
	and $(\alpha_i)_{i=1}^m$ and 
	$(\beta_j)_{j=1}^m$  are nonnegative  scalars.  We further assume that both sequences 
	have been arranged in decreasing order.

	We will prove the  representation for $a$ and $b$ in \eqref{formab} by induction on $l + m$.
	The base case is  $l + m = 2$, i.e., $l=m=1$.
	In this case both $a$ and $b$ are scalars multiples of the identity.
	The desired representation has already been achieved.
	
	Suppose that the desired representation  is true for all pairs $a$ and $b$ as in \eqref{startform}
	such that $l + m$ is less than a given number.
	Now suppose that $l + m$ is that given number.
	Observe that if $(e_k)_{k=1}^N$  are central projections adding up to 1 and 
	the desired representation has been obtained for $e_ka$ and $e_kb$ in 
	$e_kM$ for all $k$   then  adding up these representations---adding zero terms if necessary so that they have the same number of terms---we get the desired  representation   for $a$
	and $b$. Now recall  that there is
	a central projection $e$ such that $eE_1\precsim eF_1$ and $(1-e)F_1\precsim (1-e)E_1$ (\cite[Theorem 6.2.7]{KadRing}).
	Hence, reducing the proof to $eM$ and $(1-e)M$,  we can assume that $E_1$ and $F_1$ are Murray-von Neumann comparable. By symmetry, it suffices to assume that
	$E_1\precsim F_1$. Recall also that for any projection
	$P\in M$ there exists a central projection $e$ such that $eP$ is a finite projection and $(1-e)P$
	is properly infinite (\cite[Proposition 6.3.7]{KadRing}). Applying this to $E_1$, and  reducing the proof to each central cut-down, we can assume that $E_1$ is either finite or properly infinite. 
	
	\emph{Case 1: $E_1$  is finite}.  Let us find $F_1'\leq F_1$ such that $E_1\sim F_1'$.
	Since $E_1$ is finite,  there exists a unitary $u$ such that $uE_1u^*=F_1'$ (\cite[Exercise 6.9.7]{KadRing}). 
	Since it is sufficient to obtain the desired representation for $uau^*$ and $b$, let us rename $uau^*$
	as $a$ and assume that $E_1=F_1'$. Let 
	\[
	a'=\sum_{i=2}^l \mu_i E_i,\quad b'=\nu_1(F_1-F_1')+\sum_{j=2}^m \nu_jF_j. 
	\]
	Notice that the total number of projections supporting $a'$ and $b'$ is now $l+m-1$. We can thus apply the induction hypothesis in the von Neumann algebra $(1-F_1')M(1-F_1')$ to get
	\[
	a'=\sum_{i=1}^n \alpha_iP_i,\quad
	b'=\sum_{i=1}^n \beta_iQ_i.
	\]
	We  also have by induction that $\alpha_1\leq \|a'\|$ and $\beta_1\leq \|b'\|$.
	The map $x\mapsto (1-F_1')x$ is a surjective homomorphism from $Z$ to
the center of $(1-F_1')M(1-F_1')$ (by \cite[Theorem 5.4.1]{sinclair-smith}).
Thus, 	the decreasing central elements $(\alpha_i)_{i=1}^n$ and $(\beta_i)_{i=1}^n$ in $(1-F_1')M(1-F_1')$ can be lifted to central elements in $Z$. Moreover, as is clear for any surjective map between abelian von Neumann algebras,  the  decreasing order and the inequalities  $\alpha_1\leq \|a'\|$ and $\beta_1\leq \|b'\|$ can be maintained after this lifting. Let us continue to denote these central liftings 
	by $\alpha_i$ and $\beta_i$.  Notice that $\mu_1\geq \mu_2=\|a'\|\geq \alpha_i$ for all $i$ and $\nu_1\geq \|b'\|\geq \beta_i$ for all $i$. So 
	\[
	a=\mu_1E_1+\sum_{i=1}^n\alpha_iP_i,\quad
	b=\nu_1F_1'+\sum_{i=1}^n \beta_iQ_i
	\]
	are the desired representations for $a$ and $b$.
	
	\emph{Case 2: $E_1$   is properly infinite}. We can find a central projection $e$ such that
	$eE_1\precsim e(1-E_1)$ and $(1-e)(1-E_1)\precsim (1-e)E_1$. By passing to the corresponding central cut-down, we arrive at two cases: 
	
	\emph{Case 2(a): $E_1\precsim 1-E_1$}. Let us again find $F_1'\leq F_1$
	such that $E_1\sim F_1'$. Let us moreover choose $F_1'$ such that $F_1'\precsim 1-F_1'$. We can easily achieve this exploiting that $E_1$ is properly infinite. We claim that $1-E_1\sim 1\sim 1-F_1'$. Indeed, say
	$E_1'\leq 1-E_1$ is such that $E_1\sim E_1'$. Since we have assumed that
	$E_1$ is properly infinite, $E_1+E_1'\sim E_1$.
	Hence  
	\[
	1=(1-E_1-E_1')+E_1'+E_1\sim (1-E_1-E_1')+E_1'=1-E_1.
	\]
	We prove similarly that $1\sim 1-F_1'$, thereby establishing our claim. 
	From  $E_1\sim F_1'$ and   $1-E_1\sim 1-F_1'$ we again deduce---as in the case where $E_1$ is finite---that there exists a unitary $u$ such that $uE_1u^*=F_1'$. We can now continue arguing  as in the case where $E_1$ is finite to complete the induction step.
	
	\emph{Case 2(b): $1-E_1\precsim E_1$}. 
	Since $E_1$ is properly infinite, $E_1\sim 1$. (Proof:  We have  $E_1\leq 1$. 
	So, by Cantor-Bernstein, it suffices to show that $1\precsim E_1$. Indeed,
	\[
	1=(1-E_1)+E_1\precsim E_1\oplus E_1\sim E_1.)
	\] 
	Moreover, since $E_1\precsim F_1$, we have $F_1\sim 1$ as well. We can thus decompose $E_1$
	and $F_1$ as follows: $E_1=E_1'+E_1''$ and $F_1=F_1'+F_1''$, where $E_1',E_1'',F_1',F_1''$ 
	are projections  such that $E_1'\sim F_1'\sim 1$, $E_1''\sim 1-F_1$, and $F_1''\sim 1-E_1$.
	Notice that $E_1'\sim F_1'$ and that
	\[
	1-E_1'=(1-E_1)+E_1''\sim F_1''+(1-F_1)=1-F_1'.
	\]
	So  there exists a unitary $u$ such that $uE_1'u^*=F_1'$. It suffices to find the desired representations for $uau^*$ and $b$. Let us relabel $uau^*$ as $a$ and assume that 
	$E_1'=F_1'$. We have that
	\[
	a=\mu_1 F_1'+\mu_1 E_1''+\sum_{i=2}^l \mu_i E_i,
	\]
	while $b$ has the form
	\[
	b=\nu_1 F_1'+\nu_1 F_1''+\sum_{j=2}^m \nu_j F_j.
	\]
	It is thus clear that it suffices to find the desired representations for 
	\[
	a'=\mu_1E_1''+\sum_{i=2}^l \mu_i E_i,\quad
	b'=\nu_1F_1''+\sum_{j=2}^m \nu_j F_j
	\] 
	in the von Neumann algebra  $(1-F_1')M(1-F_1')$ and then lift the central coefficients to $M$
	(as in Case 1 above). Notice that the number of projections supporting $a'$ and $b'$ is still $l+m$. However, repeating the arguments used above we will find ourselves in either Case 1 or Case 2(a). More specifically, working in the von Neumann algebra $(1-F_1')M(1-F_1')$, we can find central projections $e_1,e_2,e_3,e_4$ adding up to the unit $1-F_1'$  and  such that  
\begin{enumerate}[(1)]
	\item
	 either  $e_iE_1''\precsim e_iF_1''$ or  $e_iF_1''\precsim e_iE_1''$ for all $i=1,2,3,4$,
\item
 $e_iE_1''$ is either finite or properly infinite for all $i=1,2,3,4$.
 \end{enumerate}
 Passing to the algebra $e_i(1-F_1')M(1-F_1')$, let us assume first that
	$E_1''\precsim F_1''$. 
	Then 
	\[
	E_1''\precsim F_1''\sim 1-E_1=(1-E_1')-E_1'' = (1-F_1')-E_1''.
	\] 
	So we can continue arguing as in Cases 1 and 2(a). Similarly, if $F_1''\precsim E_1''$, then 
	$F_1''\precsim (1-F_1')-F_1''$ so again we can continue arguing as in Cases 1 and 2(a).
	This completes the induction.
\end{proof}

\begin{lemma}\label{tracialineq1}
	Let $a,b\in M_+$ be  positive elements with finite spectrum represented  as in \eqref{formab} of Proposition \ref{theform}.
	If $a\smajtau b$, then 
	\begin{equation}\label{partialsmaj}
		\sum_{i=1}^k \alpha_iP_i\smajtau \sum_{i=1}^k \beta_iQ_i
	\end{equation}
	for $k=1,\dots,n$. 
\end{lemma}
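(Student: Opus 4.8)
The plan is to fix $k\in\{1,\dots,n\}$ and prove directly that $a_k:=\sum_{i=1}^k\alpha_iP_i$ is tracially submajorized by $b_k:=\sum_{i=1}^k\beta_iQ_i$; that is, fixing $\tau\in\T(M)$ and $t\in[0,\infty)$, I bound $\tau((a_k-t)_+)$ above by $\tau((b_k-t)_+)$ by a single application of the hypothesis $a\smajtau b$ at a carefully chosen central cut level. Put $R:=\sum_{i=1}^kP_i$ and $S:=\sum_{i=1}^kQ_i$, and adopt the convention $\alpha_{n+1}=\beta_{n+1}=0$, so that $k=n$ reduces to the hypothesis. The first ingredient is a projection fact: since the $P_i$ are pairwise orthogonal, the $Q_i$ are pairwise orthogonal, and $P_i\sim Q_i$ for all $i$, summing the implementing partial isometries gives $R\sim S$ in $M$; consequently $\tau(cR)=\tau(cS)$ for every $c\in Z_+$ (apply $\tau(x^*x)=\tau(xx^*)$ to $x=c^{1/2}v$, where $v\in M$ implements $R\sim S$, using that $c$ is central).

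Next I choose the cut level $\mu:=\beta_{k+1}\vee t\in Z_+$, a positive central element with $\mu\ge t$, and record two elementary functional-calculus relations, verified coefficientwise on $\widehat Z$ using that $(\beta_i)$ and $(\alpha_i)$ are decreasing:
\[
(b_k-t)_+=(b-\mu)_++(\mu-t)S,\qquad (a_k-t)_+\le (a-\mu)_++(\mu-t)R.
\]
The first is an exact identity: for $i\le k$ one has $\beta_i\ge\beta_{k+1}$, so where $\beta_i\ge t$ one has $\beta_i\ge\mu$ and $(\beta_i-t)_+=(\beta_i-\mu)+(\mu-t)$, while where $\beta_i<t$ both sides vanish (there $\beta_{k+1}\le\beta_i<t$, so $\mu=t$); for $i>k$, $\beta_i\le\beta_{k+1}\le\mu$, so $(\beta_i-\mu)_+=0$. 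The second uses only the scalar inequality $(c-t)_+\le(c-s)_++(s-t)$ for $c\ge0$, $s\ge t\ge0$, applied with $s=\mu$ on the indices $i\le k$, together with $(\alpha_i-\mu)_+\ge0$ for $i>k$. It is essential that $\mu$ is taken from the ``$b$-side'': this makes the first relation an equality, whereas the choice $\alpha_{k+1}\vee t$ would give an inequality there, and in the wrong direction for the argument below.

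Finally, apply $\tau$ to the second relation: $\tau((a_k-t)_+)\le\tau((a-\mu)_+)+\tau((\mu-t)R)$. By Lemma \ref{centrallambda}(ii), $a\smajtau b$ gives $(a-\mu)_+\smajtau(b-\mu)_+$, and evaluating this relation at the parameter value $0$ yields $\tau((a-\mu)_+)\le\tau((b-\mu)_+)$. Combining this with $\tau((\mu-t)R)=\tau((\mu-t)S)$ and with the first (equality) relation,
\[
\tau((a_k-t)_+)\le\tau((b-\mu)_+)+\tau((\mu-t)S)=\tau((b_k-t)_+),
\]
as required. There is no deep obstacle here; the only delicate point is the arithmetic of $+\infty$: if $\tau(R)=\tau(S)=\infty$ and $\mu\ne t$ on the relevant part of $\widehat Z$, the middle terms equal $+\infty$ and the right-hand side is already $+\infty$, so the inequality is trivial; otherwise all quantities are finite and additivity of $\tau$ applies without comment. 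The substance of the argument is the choice of $\mu$ from the $b$-side and the two displayed relations it produces.
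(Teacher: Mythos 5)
Your proof is correct, and it takes a genuinely different and arguably cleaner route than the paper's. The paper proves \eqref{partialsmaj} by induction on $k$: it first conjugates by a unitary to force $P_i=Q_i$, establishes the base case $k=1$ by a contradiction argument with $\|ea\|>\|eb\|$ on a suitable central cut-down, and then handles the inductive step by splitting into the cases $\alpha_k\geq\beta_k$ and $\alpha_k\leq\beta_k$ via central projections, using Lemma~\ref{centrallambda}(ii) at the level $\beta_{k+1}$ to pass to the hereditary corner $(P_1+\cdots+P_k)M(P_1+\cdots+P_k)$ and then Lemma~\ref{centrallambda}(iii) to shift back by $\beta_{k+1}$. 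Your argument bypasses induction, conjugation, and the case split entirely: for fixed $k,\tau,t$ you choose the cut level $\mu=\beta_{k+1}\vee t$ tuned to the $b$-side, verify the exact identity $(b_k-t)_+=(b-\mu)_++(\mu-t)S$ and the inequality $(a_k-t)_+\leq(a-\mu)_++(\mu-t)R$ coefficientwise, and close the loop with the trace equality $\tau(cR)=\tau(cS)$ coming from $R\sim S$ and a single application of Lemma~\ref{centrallambda}(ii). The core idea — cut at $\beta_{k+1}$ to isolate the top $k$ terms — is the same as in the paper, but your one-shot formulation with the level $\beta_{k+1}\vee t$ avoids both the reduction to corners and the need to re-add central constants, giving an explicit inequality for each fixed $\tau$ and $t$ rather than an inductive reduction. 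Both proofs are comparable in length; yours is slightly more self-contained in that it never invokes Lemma~\ref{hereditary} or Lemma~\ref{centrallambda}(iii).
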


\begin{proof}
	Since $P_i\sim Q_i$ for all $i$ and both sets of projections add up to 1, there exists a unitary $u$ such that $uQ_iu^*=P_i$ for all $i$. 
	Let us relabel $ubu^*$ as $b$  and assume that $P_i=Q_i$ for all $i$.
	
	We prove the lemma by induction on $k$. 
	Let us first prove that $\alpha_1c_{P_1}\leq \beta_1c_{P_1}$, which clearly implies the case $k=1$. 
	Passing to the central cut-down $c_{P_1}M$ if necessary we may assume that $c_{P_1}=1$ (since $a\smajtau b$ implies that $ac_{P_1}\smajtau bc_{P_1}$ in $c_{P_1}M$.) Suppose for the sake of contradiction that $\alpha_1\nleq \beta_1$. Then there exists a  projection $e\in Z$ and a scalar $\epsilon>0$
	such that $\alpha_1e\geq \beta_1e+\epsilon e$.  Since the central coefficients 
	$(\alpha_i)_{i=1}^n$ and $(\beta_i)_{i=1}^n$ are decreasing, we deduce 
	that $\|ea\|>\|eb\|$. But  this contradicts that $ea\smajtau eb$. Therefore,  $\alpha_1\leq \beta_1$.
	
	Suppose that the lemma is  true for $k-1$. 
	To prove \eqref{partialsmaj} it suffices to do it on each central cut-down $e_iM$ of a partition of unity by central projections $e_1,\ldots,e_N$. Since $Z$ is an abelian von Neumann algebra, given any two   positive elements $\alpha,\beta\in Z$ it is possible to find a   projection $e\in Z$ such that $e\alpha\geq e\beta$ and $(1-e)\alpha\leq (1-e)\beta$. Thus, we can reduce the proof to two cases: $\alpha_k\geq \beta_k$ or $	\alpha_k\leq \beta_k$. The second  case follows at once from the induction hypothesis.
	Let us assume that $\alpha_k\geq \beta_k$. We have that $(a-\beta_{k+1})_+ \smajtau (b -\beta_{k+1})_+$, by Lemma \ref{centrallambda} (ii). Hence
	\begin{align*}
		\sum_{i=1}^k (\alpha_i-\beta_{k+1})P_i &\leq \sum_{i=1}^{n}(\alpha_i-\beta_{k+1})_+P_i\\
&=(a-\beta_{k+1})_+ \smajtau  (b-\beta_{k+1})_+= \sum_{i=1}^{k}(\beta_i-\beta_{k+1})P_i.
	\end{align*}
	The above tracial submajorization  holds in the hereditary subalgebra $(P_1+\cdots +P_k)M(P_1+\cdots +P_k)$ (by Lemma \ref{hereditary} (ii)). Since 
	$\beta_{k+1}(P_1+\cdots P_{k})$ is a central element of this von Neumann algebra, we can add it  on both sides 
	by Lemma \ref{centrallambda} (iii). This yields  \eqref{partialsmaj}. 
\end{proof}		

\begin{lemma}
	Let $a,b,c\in M_+$ be  such that $a\maj c$ and $b\maj c$. Then for any central element $0\leq \lambda \leq 1$ we have that $\lambda a+(1-\lambda)b\maj c$.
\end{lemma}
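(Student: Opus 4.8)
The plan is to reduce to the case where the central coefficient $\lambda$ has finite spectrum and then to split $M$ into orthogonal central corners on which $\lambda$ becomes a scalar. First I would note that, $\lambda$ being a positive contraction in the von Neumann algebra $Z$, it is a norm limit of positive central elements $\lambda_n$ of finite spectrum with $0\le\lambda_n\le 1$. Since $\|(\lambda_n-\lambda)(a-b)\|\le\|\lambda_n-\lambda\|\,\|a-b\|\to 0$, we have $\lambda_n a+(1-\lambda_n)b\to\lambda a+(1-\lambda)b$ in norm, so --- the set $\overline{\co\{ucu^*\mid u\in\U(M)\}}$ being closed --- it suffices to handle each $\lambda_n$. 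Thus I may assume $\lambda=\sum_{i=1}^k\mu_ie_i$ with $e_1,\dots,e_k$ pairwise orthogonal central projections summing to $1$ and $\mu_i\in[0,1]$.

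Next I would transfer the hypotheses to the corners $e_iM$. For a central projection $e$ the map $x\mapsto ex$ is a surjective $\ast$-homomorphism $M\to eM$ that is contractive (hence continuous), preserves convex combinations, and satisfies $\{eu\mid u\in\U(M)\}=\U(eM)$ (a unitary $v\in\U(eM)$ is the image of $v+(1-e)$). Applying this map to the relation $a\in\overline{\co\{ucu^*\mid u\in\U(M)\}}$ and using the identity $eucu^*=(eu)(ec)(eu)^*$, one gets $e_ia\maj e_ic$ in $e_iM$, and likewise $e_ib\maj e_ic$ in $e_iM$. Since $\mu_i$ is now a \emph{scalar} in $[0,1]$ and $\overline{\co\{v(e_ic)v^*\mid v\in\U(e_iM)\}}$ is convex, I conclude $e_i(\lambda a+(1-\lambda)b)=\mu_i(e_ia)+(1-\mu_i)(e_ib)\maj e_ic$ in $e_iM$ for every $i$.

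Finally I would glue these corner approximations back together. Write $d=\lambda a+(1-\lambda)b$. Given $\epsilon>0$, for each $i$ pick a finite convex combination of unitary conjugates of $e_ic$ in $e_iM$ within $\epsilon$ of $e_id$; after rounding the weights to a common denominator and repeating conjugates, I may assume all of these have the equal-weight form $\frac1N\sum_{j=1}^N v_{i,j}(e_ic)v_{i,j}^*$ with the same $N$ and $v_{i,j}\in\U(e_iM)$. Setting $w_j=\sum_{i=1}^k v_{i,j}$ produces unitaries of $M$, and since $c$ commutes with each central $e_i$ and $v_{i,j}\in e_iM$ the cross terms vanish, so $w_jcw_j^*=\sum_i v_{i,j}(e_ic)v_{i,j}^*$. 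Hence $d-\frac1N\sum_j w_jcw_j^*=\sum_i\big(e_id-\frac1N\sum_j v_{i,j}(e_ic)v_{i,j}^*\big)$ is a sum whose $i$-th summand lies in the corner $e_iMe_i$; as these corners are mutually orthogonal, the norm of the sum is the maximum of the summand norms, which is $<\epsilon$. Letting $\epsilon\to 0$ gives $d\in\overline{\co\{ucu^*\mid u\in\U(M)\}}$, i.e. $\lambda a+(1-\lambda)b\maj c$. I expect this last gluing step to be the main obstacle: one must merge unitaries living in different central cut-downs into genuine unitaries of $M$ while keeping the number of conjugates uniform across the corners and controlling the error, which is exactly what the orthogonality of the corners and the passage to a common equal-weight form are for; the earlier steps are routine.
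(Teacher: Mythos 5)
Your proof is correct and follows essentially the same strategy as the paper's: a norm-limit reduction to the case where $\lambda$ has finite spectrum, followed by passage to the central corners $e_iM$ on which $\lambda$ is a scalar, and finally a gluing step. The paper treats the gluing as self-evident ("it suffices to show that $e_i(\alpha_i a+(1-\alpha_i)b)\maj e_ic$ in $e_iM$ for all $i$"), whereas you spell it out; incidentally, one can avoid the rounding-to-a-common-denominator step by taking the product convex combination $w_J=\sum_i v_{i,j_i}$ with weight $t_J=\prod_i t_{i,j_i}$ over multi-indices $J=(j_1,\dots,j_k)$, which reproduces the corner averages exactly rather than up to a rounding error.
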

\begin{proof}
	By a simple limiting argument it suffices to consider the case that $\lambda$ has finite spectrum.
	Say $\lambda=\sum_{i=1}^n \alpha_ie_i$ where $e_1,\ldots,e_n$ are pairwise orthogonal central projections adding up to 1 and $\alpha_i\in [0,1]$ for all $i$. In order  to show  that $\lambda a+(1-\lambda)b\maj c$ it suffices to show that
	$
	e_i(\alpha_i a+(1-\alpha_i)b)\maj e_ic
	$ in $e_iM$ for all $i$. But $e_ia,e_ib\maj e_ic$ for all $i$ and $e_i\alpha_i a+e_i(1-\alpha_i)b$ is a scalar convex combination of $e_ia$ and $e_ib$. The lemma is thus proved.
\end{proof}

\begin{lemma}\label{pinching}Let   $P,Q\in M$ be orthogonal projections
	and $\mu,\nu\in Z_+$. 
	There exists $\rho\in Z_+$ such that $\min(\mu,\nu)\leq \rho\leq \max(\mu,\nu)$   and such that for any central element $0\leq \lambda\leq 1$ we have
	\[
	\mu'P+\nu'Q\maj \mu P+\nu Q,
	\] 
	where
	\begin{align*}
		\mu' &=\mu\lambda+(1-\lambda)\rho,\\
		\nu'&=\nu\lambda+(1-\lambda)\rho.
	\end{align*}
\end{lemma}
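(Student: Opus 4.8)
The plan is to reduce the claim to producing a single ``flat'' central majorant, and then to extract that majorant from the Dixmier averaging property. First, observe the identity
\[
\mu'P+\nu'Q=\lambda(\mu P+\nu Q)+(1-\lambda)\rho(P+Q),
\]
which holds whenever $\mu'=\mu\lambda+(1-\lambda)\rho$ and $\nu'=\nu\lambda+(1-\lambda)\rho$ (here $Z$ is abelian, so the central factors commute with everything). By the preceding lemma on convex combinations of majorants, applied with the trivial majorization $\mu P+\nu Q\maj\mu P+\nu Q$ and with $\rho(P+Q)\maj\mu P+\nu Q$, it therefore suffices to exhibit \emph{one} $\rho\in Z_+$ satisfying $\min(\mu,\nu)\le\rho\le\max(\mu,\nu)$ and $\rho(P+Q)\maj\mu P+\nu Q$; that single $\rho$ then works for every central $\lambda\in[0,1]$.

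To produce $\rho$, put $R:=P+Q$, which is a projection because $P\perp Q$, and pass to the corner $RMR$, a von Neumann algebra with unit $R$. Majorization in $RMR$ implies majorization in $M$: a unitary $u$ of $RMR$ extends to the unitary $u+(1-R)$ of $M$, and this extension conjugates elements of $RMR$ exactly as $u$ does. Set $b=\mu P+\nu Q\in RMR$. Since $(\mu-\min(\mu,\nu))P+(\nu-\min(\mu,\nu))Q\ge0$, and symmetrically with $\max$, we have $\min(\mu,\nu)R\le b\le\max(\mu,\nu)R$, with both bounds lying in $Z(RMR)$. By the Dixmier averaging property of the von Neumann algebra $RMR$, the norm-closed convex hull of $\{ubu^*\mid u\in\U(RMR)\}$ meets $Z(RMR)$; choose $\rho_0$ in this intersection. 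Then $\rho_0\maj b$ in $RMR$, hence $\rho_0\maj b$ in $M$. Moreover every conjugation $u(\cdot)u^*$, $u\in\U(RMR)$, fixes the central bounds, so $\min(\mu,\nu)R\le ubu^*\le\max(\mu,\nu)R$ for all such $u$, and these inequalities survive convex combinations and norm limits; hence $\min(\mu,\nu)R\le\rho_0\le\max(\mu,\nu)R$.

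Finally, since $Z(RMR)=Z(M)R$, write $\rho_0=\rho R$ with $\rho\in Z(M)_+$, and---replacing $\rho$ by $(\rho\wedge\max(\mu,\nu))\vee\min(\mu,\nu)$, which does not alter $\rho R$ in view of the bounds just established---arrange $\min(\mu,\nu)\le\rho\le\max(\mu,\nu)$. Then $\rho(P+Q)=\rho_0\maj\mu P+\nu Q$, and the reduction from the first paragraph completes the proof. I expect the only real difficulty to be in the second step: what is actually needed is a \emph{central} element sitting inside the closed convex hull of the unitary orbit of $b$, and this is precisely what the Dixmier property hands over---after which nothing beyond the obvious norm bounds on $b$ has to be tracked. (A more computational route would instead split $Z$ according to the Murray--von Neumann comparison of $P$ and $Q$ and according to whether $RMR$ is finite or properly infinite, take $\rho$ to be the center-valued trace of $b$ on the finite part and the value of $b$ on the relatively infinite projection on the remaining part, and verify $\rho(P+Q)\maj b$ directly by spreading a subprojection of the larger of $P$ and $Q$ along a long sequence of unitary conjugates; the Dixmier route, however, dispatches all cases at once.)
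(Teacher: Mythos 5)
Your proof is correct and follows essentially the same route as the paper's: apply Dixmier's approximation theorem in the corner $(P+Q)M(P+Q)$ to obtain a central $\rho$ there with $\rho(P+Q)\maj\mu P+\nu Q$, lift it to $Z(M)$ while preserving the bounds, and then invoke the immediately preceding lemma on central convex combinations of two majorants. The paper leaves the lift-to-$Z(M)$ step as a one-line remark, whereas you spell out the truncation argument; that is the only difference and it is one of exposition, not substance.
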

\begin{proof}
	By Dixmier's approximation theorem (\cite[Theorem 8.3.5]{KadRing}) applied in the von Neumann algebra  $(P+Q)M(P+Q)$ we have that
	\[
	\rho P+\rho Q\maj  \mu P+\nu Q,
	\]
	for some $\min(\mu,\nu)\leq \rho\leq \max(\mu,\nu)$ in the center of $(P+Q)M(P+Q)$. We can lift $\rho$
	to an element in the center of $M$ satisfying the same inequalities. Let  $\lambda\in Z$ be such that $0\leq \lambda\leq 1$. Then, by the previous lemma,
	\[
	(\mu \lambda+\rho(1-\lambda))P+(\nu \lambda+\rho(1-\lambda))Q\maj \mu P+\nu Q,
	\]
	as desired.
\end{proof}

\begin{remark}  
	In the case that $M$ is finite one can show that $\rho=\frac{\mu E(P)+\nu E(Q)}{E(P+Q)}$, where $E\colon M\to Z$ is the center-valued trace.
\end{remark}

In the following proposition we assume that  $M$ is a finite von Neumann algebra. We denote by   $E\colon M\to Z$ the center valued trace of $M$.

\begin{proposition}\label{finitealg}
	Suppose that $M$ is a finite von Neumann algebra. Let $a,b\in M_+$ be positive elements of the form \eqref{formab} in Proposition \ref{theform}.  
	If
	\begin{equation}\label{finitecondition}
		\sum_{i=1}^k \alpha_iE(P_i)\leq \sum_{i=1}^k\beta_i E(Q_i)
	\end{equation}
	for all $k=1,\dots,n$, then $a\smaj b$.
\end{proposition}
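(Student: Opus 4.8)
The plan is to realize $a$ as the final term of a finite chain of elementary modifications of $b$, each of which preserves the relation of being submajorized by $b$ and which together transform $b$ into $a$.

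I would begin by normalizing the data. Since $\sum_iP_i=\sum_iQ_i=1$ and $P_i\sim Q_i$ for all $i$, a unitary $u\in M$ conjugates the $Q_i$ onto the $P_i$; as $ubu^*$ and $b$ submajorize one another, it suffices to treat $ubu^*$ in place of $b$, so one may assume $Q_i=P_i$ for every $i$. Then $a=\sum_i\alpha_iP_i$ and $b=\sum_i\beta_iP_i$ with the same projections, and writing $e_i:=E(P_i)\in Z_+$---note that $E(P_i)=E(Q_i)$ because equivalent projections have equal centre-valued trace in the finite algebra $M$---the hypothesis \eqref{finitecondition} reads $\sum_{i=1}^k\alpha_ie_i\le\sum_{i=1}^k\beta_ie_i$ for $k=1,\dots,n$, with $\sum_ie_i=1$.

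Next I would isolate two elementary moves acting on positive elements of the form $c=\sum_i\gamma_iP_i$, $\gamma_i\in Z_+$, each preserving submajorization by a fixed $b$. A \emph{decrease} replaces some $\gamma_i$ by $\gamma_i'$ with $0\le\gamma_i'\le\gamma_i$: the new element $c'$ satisfies $0\le c'\le c$, hence $c'\smaj c$ by Lemma~\ref{basicsmaj}(i), so $c'\smaj b$ whenever $c\smaj b$. A \emph{pinch} applies Lemma~\ref{pinching} to two of the orthogonal projections $P_j,P_l$ and the scalars $\gamma_j,\gamma_l$: for a central parameter $\lambda\in[0,1]$ it replaces $(\gamma_j,\gamma_l)$ by $(\gamma_j\lambda+(1-\lambda)\rho,\ \gamma_l\lambda+(1-\lambda)\rho)$ with $\rho\in Z_+$ the central weighted mean furnished by that lemma, and Lemma~\ref{pinching} gives that the modified two-term part $\mu'P_j+\nu'P_l$ majorizes, hence submajorizes, the original $\gamma_jP_j+\gamma_lP_l$; adjoining the untouched summand $\sum_{i\ne j,l}\gamma_iP_i$ via Lemma~\ref{orthosum}, the whole modified element is submajorized by $c$, and so by $b$ if $c\smaj b$. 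A pinch preserves $\sum_i\gamma_ie_i$ (by the weighted-mean property of $\rho$), so decreases are genuinely needed to pass from $b$, with $\sum_i\beta_ie_i=E(b)$, to $a$, with the possibly smaller $\sum_i\alpha_ie_i=E(a)$.

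It then remains to show that $a$ can be reached from $b$ by a finite chain of decreases and pinches; granting this, $a\smaj b$ follows by transitivity (one may alternatively reach $a$ only in the limit, using closedness of $\{c\in M_+\mid c\smaj b\}$, as in the proof of Proposition~\ref{easyimplication}). This is the von Neumann-algebraic counterpart of the elementary fact that weak majorization of real vectors is generated by coordinate decreases and $T$-transforms, and I would prove it by induction on the number $n$ of projections. In the inductive step one first splits the centre $Z$ by projections so that all relevant comparisons among the $\alpha_i$ and the $\beta_i$ become definite---legitimate since the problem decomposes over a partition of unity by central projections, exactly as in the proofs of Lemmas~\ref{centrallambda} and~\ref{tracialineq1}---and then, working separately in each central cut-down $zM$, applies the scalar combinatorics of weak majorization: a bounded number of decreases and pinches brings one coordinate of the current element into agreement with the corresponding coordinate of $a$, after which one restricts to the corner spanned by the remaining $P_l$---whose centre-valued trace is $E$ renormalized by a positive central scalar, so that the hypothesis is inherited---and invokes the inductive hypothesis for $n-1$ projections. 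I expect this coordination of the central splittings with the classical weak-majorization combinatorics, together with checking that the partial-sum hypotheses survive each reduction, to be the only real obstacle; the analytic inputs (Lemmas~\ref{orthosum}, \ref{basicsmaj} and~\ref{pinching}) are already in hand.
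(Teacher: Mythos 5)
Your plan is essentially the paper's: normalize to $Q_i=P_i$ by a unitary, use Lemma~\ref{pinching} (pinches) together with the monotonicity from Lemma~\ref{basicsmaj}(i) (decreases) as the generating moves, decompose over a central partition of unity, and induct on $n$. Where the proposal leaves a real gap is precisely where the paper's proof does its work: you assert that after a single central splitting ``making all comparisons among the $\alpha_i$ and the $\beta_i$ definite,'' a bounded sequence of decreases and pinches reduces $n$ by one via ``scalar combinatorics.'' That reduction does not go through as stated. The pinching parameter $\rho$ from Lemma~\ref{pinching}, and hence the pinched coefficients $\beta_1',\beta_2'$, are genuine central elements depending on $E(P_1),E(P_2)$; they are not scalars on any of your preliminary central cut-downs, so a fixed finite splitting in advance cannot turn the problem into a vector-majorization computation. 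The paper instead chooses $\lambda$ explicitly---writing the constraint $\beta_1'\geq\alpha_1$ as $\kappa\lambda\geq\gamma$ with $\kappa=\beta_1-\rho$, $\gamma=\alpha_1-\rho$, and setting $\lambda=\gamma_+/\kappa$---and then uses extremal disconnectedness of $\widehat Z$ to split \emph{after} the pinch into clopen pieces where either $\beta_1'=\alpha_1$ (inductive progress) or $\beta_1'=\beta_2'$, in which latter case it re-pinches against $\beta_{k+1}'$, with the process terminating because the string of equal leading coefficients can only grow, ending at the trivial case $\beta_1'=\cdots=\beta_n'\geq\alpha_1$ where $a\leq b$. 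One also needs the preliminary reduction to $c_{P_i}=1$ (not mentioned in your sketch) so that, e.g., $\alpha_1 E(P_1)\leq\beta_1 E(P_1)$ actually forces $\alpha_1\leq\beta_1$. You correctly flag the ``coordination of the central splittings with the classical weak-majorization combinatorics'' as the main obstacle, but the proposal does not overcome it, and this is the content of the proof.
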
	
\begin{proof}
	Conjugating $b$ by a unitary	we may assume that $P_i=Q_i$ for all $i$.
	Passing to central cut-downs $e_jM$ for suitable projections $e_1,\ldots,e_N\in Z$ that partition the unit we may assume that $c_{P_i}=1$ for all $i$.
	Assuming these simplifications, we prove the proposition   by induction on $n$. More specifically, we  will show by induction on $n$ that if $P_1,\ldots,P_n$ are pairwise orthogonal projections
	in a finite von Neumann algebra such that $c_{P_i}=1$ for all $i$, and 
	$(\alpha_i)_{i=1}^n$ and $(\beta_i)_{i=1}^n$ are decreasing positive central elements such that  
	\begin{equation}\label{finitecondition2}
	\sum_{i=1}^k \alpha_iE(P_i)\leq \sum_{i=1}^k\beta_i E(P_i) \hbox{ for all }k=1,\ldots,n,
	\end{equation}
	then $a=\sum_{i=1}^n\alpha_iP_i\smaj \sum_{i=1}^n\beta_iP_i=b$. We do not assume (mostly as a matter of convenience), that the projections $P_i$ add up to 1.

	Consider the case $n=1$. From the inequality  \eqref{finitecondition2} we get that
	$\alpha_1E(P_1)\leq \beta_1E(P_1)$.
	Since  $c_{P_1}=1$ this implies that $\alpha_1\le \beta_1$, which in turn implies that $a\leq b$. By Lemma \ref{basicsmaj} (i), $a\smaj b$ as desired.
	
	Suppose now, by induction, that the desired result   is valid whenever the number of projections is less than $n$.
Consider the case of $n$ projections. 
	Let us apply Lemma \ref{pinching} to $\beta_1 P_1+\beta_2P_2$ with a suitable $0\leq \lambda\leq 1$ (to be specified soon)
	so as to obtain  $\rho\in Z$ and $\beta_1'P_1+\beta_2'P_2$ majorized by $\beta_1 P_1+\beta_2P_2$. Since $\beta_1\geq\beta_2$
	we have $\beta_1\geq \rho\geq \beta_2$ and $\beta_1'\geq \beta_2'$. Let us choose $\lambda$ such that the $\widehat Z $ (the spectrum of $Z$)
	partitions into two clopen sets satisfying that  
	\begin{enumerate}
		\item[(C1)]	
		$\beta_1'=\alpha_1$ on the first set
		\item[(C2)]
		$\beta_1'=\beta_2'\geq \alpha_1$ on the second set
	\end{enumerate}
	To see that this is possible, notice that the inequality $\beta_1'\geq \alpha_1$, put in terms of $\lambda$, has the   form
	\[
	\kappa \lambda\geq \gamma,
	\]
	for some $\kappa\in Z_+$ and some $\gamma\in Z_{\sa}$ such that 
	$\kappa\geq \gamma$  (in fact,  $\kappa=\beta_1-\rho\in Z_+$ and $\gamma=\alpha_1-\rho\in Z_{\sa}$). Let us choose $\lambda=\gamma_+/\kappa$, where the fraction is defined to be zero outside the set $\overline{\{x\in \widehat Z\mid \kappa(x)>0\}}$. (Recall that we regard  elements of $Z$ as continuous functions on its spectrum $\widehat Z$). The quotient $\gamma_+/\kappa$ is well defined in $Z$ since $\kappa\geq \gamma_+$ and  $Z$ is an abelian von Neumann algebra. Observe that $0\leq \lambda\leq 1$. Let us partition  $\widehat Z$   into the sets  $\overline{\{x\in \widehat Z\mid \lambda(x)>0\}}$ and its complement. These sets are clopen since $\widehat Z$ is  extremally disconnected. On the first set we have that $\kappa\lambda=\gamma$, which, put back in terms of $\beta_1'$, implies that $\beta_1'=\alpha_1$. Thus, we are in case (C1) above. On the second set we have that $\lambda=0$. This implies that  $\beta_1'=\beta_2'\geq \alpha_1$; i.e,  we are in case (C2). Thus, $\lambda$ is as desired.
	
	Let \[
	b'=\beta_1'P_1+\beta_2'P_2+\sum_{i>2}\beta_iP_i.\] 
	Then \eqref{finitecondition2} continues to hold for $a$ and $b'$. Indeed, for $k=1$ because $\beta_1'\geq \alpha_1$, and for
	$k>1$ because
	\[
	\beta_1'E(P_1)+\beta_2'E(P_2)= \beta_1E(P_1)+\beta_2E(P_2).
	\]
	Since $b'\maj b$, in order to prove the proposition it suffices to  show that $a\smaj b'$. So let us rename $b'$ as $b$, $\beta_1'$ as $\beta_1$, and $\beta_2'$
	as   $\beta_2$. 
	
	We can restrict to the two clopen sets described above and prove the proposition in each case.  (In other words, if $e_1,e_2\in Z$ are the central projections corresponding to these sets, then $e_1a$ and $e_1b$ continue to satisfy \eqref{finitecondition2} in $e_1M$ (keep in mind that the center valued trace of $e_1M$ is $e_1E(\cdot)$) and similarly for $e_2a$ and $e_2b$ in $e_2M$. Moreover, it suffices to show that $e_ia\smaj e_i b$ in $e_iM$ for $i=1,2$.) We claim that after restricting to the first set
	we are done by induction. Indeed, from \eqref{finitecondition2}, and keeping in mind that $\beta_1=\alpha_1$ on this set, we obtain that
	the elements
	\[
	a'=\sum_{i=2}^n \alpha_iP_i\hbox{ and }b''=\sum_{i=2}^n \beta_iP_i
	\]
	satisfy the induction hypothesis. So $a'\smaj b''$.  By Lemma \ref{hereditary} this   relation holds in    the hereditary subalgebra $(P_2+\cdots+P_n)M(P_2+\cdots+P_n)$.
	Therefore,  
	\[
	a=\alpha_1P_1+a'\smaj \beta_1P_1+b''=b,\] 
	as desired.

	Let us restrict to the second set where $\beta_1=\beta_2\geq \alpha_1$. Suppose more generally that for some $1<k\leq n$ we have that 
	$\beta_1=\cdots=\beta_k\geq \alpha_1$.   Assume first that $k<n$. Let us apply Lemma \ref{pinching} to $\beta_1(P_1+\cdots+P_k)+\beta_{k+1}P_{k+1}$ yielding
	the element $\beta_1'(P_1+\cdots+P_k)+\beta_{k+1}'P_{k+1}$ majorized by $\beta_1(P_1+\cdots+P_k)+\beta_{k+1}P_{k+1}$. We choose $0\leq \lambda\leq 1$
	such that there exist  two clopen sets such that 
	\begin{enumerate}
		\item[(C1')]
		$\beta_1'=\alpha_1$ on the first set,
		\item[(C2')]
		$\beta_1'=\beta_{k+1}'\geq \alpha_1$, on the second set.
	\end{enumerate}
	Such a choice is possible by the discussion above.
		Observe that the conditions in \eqref{finitecondition2} continue to hold for $a$ and 
	\[
	b'=\beta_1'(P_1+\cdots+P_k)+\beta_{k+1}'P_{k+1}+\sum_{i=k+2}^n \beta_iP_i.
	\]
They hold	for $l\leq k$  because $\beta_1'\geq \alpha_1$ and for $l\geq k+1$ because
	\[
	\beta_1'E(P_1+\cdots+P_k)+\beta_{k+1}'E(P_{k+1})=\beta_1E(P_1+\cdots+P_k)+\beta_{k+1}E(P_{k+1}).
	\]
	We have already shown how to deal  with  the set where $\beta_1'=\alpha_1$  using the induction hypothesis. It remains to consider the case when $k=n$, i.e., 
	$\beta_1=\cdots=\beta_n\geq \alpha_1$. 	But in this case we clearly have that $a\leq b$.  So, by Lemma \ref{basicsmaj} (i), $a\smaj b$.
\end{proof}

\begin{proposition}\label{finitevNsmaj}
	Suppose that $M$ is a  finite von Neumann algebra. Let  $a,b\in M_+$.
	If $a\smajtau b$, then $a\smaj b$.
\end{proposition}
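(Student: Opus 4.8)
The plan is to reduce to the case that $a$ and $b$ have finite spectrum, and then to feed the structural decomposition of Proposition \ref{theform} together with the partial-sum estimates of Lemma \ref{tracialineq1} into Proposition \ref{finitealg}.

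For the reduction, fix $\epsilon>0$. Using the Borel functional calculus in $M$, round the spectrum of $a$ down, and the spectrum of $b$ up, to a sufficiently fine finite grid; this produces positive elements $a',b'\in M_+$ with finite spectrum such that $0\leq a'\leq a$, $b\leq b'$, $\|a-a'\|<\epsilon$ and $\|b-b'\|<\epsilon$. From $a'\leq a$ and $b\leq b'$ we get $a'\smaj a$ and $b\smaj b'$ by Lemma \ref{basicsmaj}(i), hence $a'\smajtau a$ and $b\smajtau b'$ by Proposition \ref{easyimplication}; combining these with the hypothesis $a\smajtau b$ and the evident transitivity of $\smajtau$ yields $a'\smajtau b'$. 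Granting the proposition for elements with finite spectrum, $a'\smaj b'$, so there are contractions $d_1,\dots,d_N\in M$ with $\|a'-\frac1N\sum_{i=1}^N d_ib'd_i^*\|<\epsilon$. Since $\|d_ib'd_i^*-d_ib d_i^*\|\leq\|b'-b\|<\epsilon$ and $\|a-a'\|<\epsilon$, this gives $\|a-\frac1N\sum_{i=1}^N d_ib d_i^*\|<3\epsilon$, and as $\epsilon>0$ is arbitrary, $a\smaj b$.

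So suppose $a$ and $b$ have finite spectrum. By Proposition \ref{theform} we may write $a=\sum_{i=1}^n\alpha_iP_i$ and $b=\sum_{i=1}^n\beta_iQ_i$ with $(P_i)$, $(Q_i)$ pairwise orthogonal projections summing to $1$, $P_i\sim Q_i$ for all $i$, and $(\alpha_i)$, $(\beta_i)$ decreasing sequences of positive central elements. Lemma \ref{tracialineq1} then gives $\sum_{i=1}^k\alpha_iP_i\smajtau\sum_{i=1}^k\beta_iQ_i$ for each $k=1,\dots,n$. To pass from these tracial submajorizations to the inequalities \eqref{finitecondition} required by Proposition \ref{finitealg}, I use the observation that for every bounded positive linear functional $\phi$ on the center $Z$, the composition $\phi\circ E\colon M_+\to[0,\infty)$ is a bounded, hence lower semicontinuous, trace on $M$, since $E$ is a conditional expectation onto $Z$ with $E(x^*x)=E(xx^*)$ for all $x\in M$. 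Evaluating $\sum_{i=1}^k\alpha_iP_i\smajtau\sum_{i=1}^k\beta_iQ_i$ on $\phi\circ E$ at $t=0$, and using that the $\alpha_i,\beta_i$ are central so that $E(\alpha_iP_i)=\alpha_iE(P_i)$, gives $\phi\big(\sum_{i=1}^k\alpha_iE(P_i)\big)\leq\phi\big(\sum_{i=1}^k\beta_iE(Q_i)\big)$ for all such $\phi$; since positive functionals detect the order in the von Neumann algebra $Z$, we conclude $\sum_{i=1}^k\alpha_iE(P_i)\leq\sum_{i=1}^k\beta_iE(Q_i)$ for $k=1,\dots,n$. Proposition \ref{finitealg} now yields $a\smaj b$, finishing the proof.

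Essentially all of the hard work has already been carried out in the earlier results (the decomposition of Proposition \ref{theform}, the partial-sum inequalities of Lemma \ref{tracialineq1}, and the hands-on construction in Proposition \ref{finitealg}), so no serious obstacle remains here. The only points requiring care are choosing the finite-spectrum approximations $a',b'$ monotone in the correct directions, so that tracial submajorization is preserved through the approximation, and recognizing that $\phi\circ E$ is a trace, which is exactly what converts the a priori weaker tracial submajorization into the center-valued trace inequality feeding Proposition \ref{finitealg}.
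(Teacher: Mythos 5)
Your proof is correct and follows essentially the same route as the paper's: reduce to the finite-spectrum case, decompose via Proposition \ref{theform}, extract the partial-sum tracial inequalities from Lemma \ref{tracialineq1}, convert them to the center-valued trace inequalities \eqref{finitecondition} by composing $E$ with positive functionals on $Z$ (the paper uses point evaluations $\delta_x\circ E$), and conclude with Proposition \ref{finitealg}. Your monotone rounding $a'\leq a$, $b\leq b'$ is a slightly cleaner way to handle the approximation step than the paper's $(a'-2\epsilon)_+$ bookkeeping, but the substance is the same.
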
	
\begin{proof}
	We can reduce the proof to the case that $a$ and $b$ have finite spectrum. For suppose $\|a-a'\|<\epsilon$ and $\|b-b'\|<\epsilon$ for some $a'$ and $b'$ of finite spectrum and some $\epsilon>0$. Then, relying on Lemma \ref{basicsmaj}, we deduce that  $(a'-2\epsilon)_+\smaj (a-\epsilon)_+$ and 
	$(b-\epsilon)_+\smaj b'$. Hence    $(a'-2\epsilon)_+\smajtau b'$. Suppose  we have shown that $(a'-2\epsilon)_+\smaj b'$. Then, again using Lemma \ref{basicsmaj}, 
	we obtain that $(a-4\epsilon)_+\smaj (a'-3\epsilon)_+\smaj(b'-\epsilon)_+\smaj b$.
	Since $\epsilon>0$ can be arbitrarily small,  we arrive at $a\smaj b$, as desired.
	So let us assume that $a$ and $b$ have finite spectrum.

Express $a$ and $b$ in the form \eqref{formab} of Proposition \ref{theform}:
	\[
	a=\sum_{i=1}^n\alpha_iP_i,\quad
	b=\sum_{i=1}^n\beta_iP_i.
	\]
(We have conjugated $b$ by a unitary so that the projections in $a$ and $b$ are the same.) We can 
	take central cut-downs and reduce to the case that $c_{P_i}=1$ for all $i=1,\dots,n$.  From $a\smajtau b$ we deduce from  Lemma \ref{tracialineq1} that 
	\[
	\tau\Big(\sum_{i=1}^k \alpha_iP_i\Big)\leq \tau\Big(\sum_{i=1}^k \beta_iP_i\Big)
	\]
	for all $\tau\in \T(M)$ and all $k=1,\ldots,n$. Letting $\tau$ range through traces of the form $\delta_x\circ E$, where $\delta_x$ is a point evaluation on the center, 
	we deduce that  \eqref{finitecondition} from Proposition \ref{finitealg} holds. The desired result now follows from Proposition \ref{finitealg}.
\end{proof}

Recall that $E\colon M\to Z$ denotes the center valued trace of $M$ (whenever $M$ is assumed to be a finite von Neumann algebra).
 
\begin{proposition}\label{finitealgmaj}
Suppose that $M$ is a finite von Neumann algebra. Let $a,b\in M_+$ be positive contractions of the form \eqref{formab} in Proposition \ref{theform}.  
	Let $r\geq 0$.  If
	\begin{enumerate}
		\item [(a)] $\displaystyle \sum_{i=1}^k (\alpha_{i}-r)_+E(P_i)\leq \sum_{i=1}^k \beta_iE(Q_i)$ for all $k=1,\dots,n$,
	\end{enumerate}	
	and  
	\begin{enumerate}
		\item [(a')] $\displaystyle \sum_{i=k}^n (1-\alpha_{i}-r)_+E(P_i)\leq \sum_{i=k}^n (1-\beta_i)E(Q_i)$ for all $k=1,\dots,n$,
	\end{enumerate}	
	then there exists $b'\in M_+$ such that $b'\maj b$ and   $\|a-b'\|\leq r$.
\end{proposition}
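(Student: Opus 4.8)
The plan is to recast the conclusion as a center-valued combinatorial problem about the coefficients $\alpha_i,\beta_i$ and to solve it by an induction on $n$ that parallels the proof of Proposition \ref{finitealg}, with Lemma \ref{pinching} as the engine. By conjugating $b$ by a unitary I may assume $Q_i=P_i$ for all $i$, and by passing to central cut-downs I may assume $c_{P_i}=1$ for all $i$ — the same reductions made in the proofs of Propositions \ref{finitealg} and \ref{finitevNsmaj}, which affect neither hypothesis, nor the distance bound, nor the relation $\maj$. It then suffices to produce decreasing positive central elements $\gamma_1\geq\cdots\geq\gamma_n$ with
\[
(\alpha_i-r)_+\leq\gamma_i\leq\alpha_i+r\qquad(i=1,\dots,n)
\]
such that $b':=\sum_{i=1}^n\gamma_i P_i$ satisfies $b'\maj b$: the displayed inequalities give $b'\geq 0$ and $-r\cdot 1\leq a-b'\leq r\cdot 1$, hence $\|a-b'\|\leq r$. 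For the majorization it is natural to build $b'$ directly as an iterated pinching of $b$, so that $b'\maj b$ is automatic; in particular the equality of center-valued traces $\sum_i\gamma_i E(P_i)=\sum_i\beta_i E(Q_i)$ will come out of the construction rather than being imposed by hand.

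The construction itself is an induction on $n$ following Proposition \ref{finitealg}: one pinches the current top block of equal coefficients together with the next coefficient via Lemma \ref{pinching} (combined with the lemma preceding it, on convex combinations of elements majorizing a common element), choosing the central parameter so that $\widehat Z$ partitions into clopen pieces on each of which one of finitely many model situations occurs — the current top coefficient has been brought down to one of its box endpoints, whereupon $\gamma_1$ is frozen and one recurses with one fewer coefficient inside $(P_2+\cdots+P_n)M(P_2+\cdots+P_n)$ (Lemma \ref{hereditary}), adding $\gamma_1 P_1$ back at the end; or the top block has grown by one and one recurses there. The role of the two hypotheses is transparent at the level of this combinatorics, which is the classical feasibility criterion for box-constrained majorization: hypothesis (a) — which implies $(a-r)_+\smaj b$ by Proposition \ref{finitealg} — is exactly what makes the lower constraints $\gamma_i\geq(\alpha_i-r)_+$ compatible with the head-sum inequalities $\sum_{i\leq k}\gamma_i E(P_i)\leq\sum_{i\leq k}\beta_i E(P_i)$ forced by $b'\maj b$, while hypothesis (a') — which, after reversing the indexing, implies $(1-a-r)_+\smaj 1-b$ by Proposition \ref{finitealg} — makes the upper constraints $\gamma_i\leq\alpha_i+r$ compatible with the complementary tail-sum inequalities. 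The two sides are exchanged by the involution $x\mapsto 1-x$, which sends $a,b,b'$ to $1-a,1-b,1-b'$, interchanges (a) and (a'), and preserves $\maj$; I would invoke this symmetry to avoid duplicating the argument.

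The main obstacle, relative to Proposition \ref{finitealg}, is that there only one family of partial-sum inequalities had to be tracked through the induction, whereas here one must keep control of both (a) and (a') — equivalently of both halves of $a-r\leq b'\leq a+r$ — after every pinching and every passage to a clopen piece or a corner; this forces a richer case analysis (a pinch of the top pair may now terminate because the larger coefficient meets its lower box endpoint, because the smaller one meets its upper box endpoint, or because the two coincide). Concretely, the key technical step will be the analogue of the verification in Proposition \ref{finitealg} that the modified data still satisfies \eqref{finitecondition2}: one must check that on each clopen piece produced by a pinch, the new coefficients together with the unchanged $r$ and the $E(P_i)$ still satisfy both (a) and (a') for the smaller value of $n$, which rests on $\alpha_1\geq\cdots\geq\alpha_n$ and the elementary inequalities $(\alpha_i-r)_+\leq\alpha_i+r$ and $(1-\alpha_i-r)_+\geq 1-\alpha_i-r$. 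The base case $n=1$ is immediate: there $a=\alpha_1\cdot 1$ and $b=\beta_1\cdot 1$ with $\alpha_1,\beta_1\in Z$, and (a) together with (a') (using that $b$ is a contraction) amount to $(\alpha_1-r)_+\leq\beta_1\leq\alpha_1+r$, so already $\|a-b\|\leq r$ and one takes $b'=b$.
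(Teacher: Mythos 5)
Your framework (unitary conjugation, central cut-downs, induction on $n$ driven by Lemma \ref{pinching}) matches the paper, and your base case is correct. But the key stopping case in the pinch is misidentified, and with it the mechanism that makes the induction close. In the paper, the central parameter $\lambda$ is chosen so that $\widehat Z$ partitions into clopen pieces on each of which one of three things happens: (C1) $\beta_1'=(\alpha_1-r)_+$; (C3) $\beta_1'=\beta_2'$; or (C2) the $k=2$ tail-sum inequality of (a') \emph{becomes an equality}, i.e.\ \eqref{equaltrace}. Your description of the middle case as ``the smaller coefficient meets its upper box endpoint $\alpha_2+r$'' is not the same thing; the saturating constraint is a partial-sum inequality involving the whole tail, and it can allow $\beta_2'$ to pass $\min(1,\alpha_2+r)$ when there is slack in (a') at $k\geq 3$. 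More importantly, in case (C2) the equality \eqref{equaltrace} is precisely what gives $E_P(a'')=E_P(b'')$ for $a''=\sum_{i\geq 2}\min(1,\alpha_i+r)P_i$ and $b''=\sum_{i\geq 2}\beta_iP_i$, which is what allows the paper to recurse with $r=0$ on the corner $PMP$. You state that the recursion proceeds with ``the unchanged $r$,'' but that does not work: after freezing $\gamma_1$ one has no control of the $k=1$ inequality in (a) for the sub-problem without the trace-equality, and the induction does not close.

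Your reformulation --- find \emph{decreasing} $\gamma_i\in[(\alpha_i-r)_+,\alpha_i+r]$ with $b'=\sum\gamma_iP_i\maj b$ --- is also a stronger claim than what the paper proves or uses. In case (C2) the paper produces $b'=\beta_1P_1+\sum_{i\geq 2}\min(1,\alpha_i+r)P_i$, whose coefficients need not be decreasing (take $\alpha_1=\alpha_2$ with $\beta_1<\alpha_1+r$); the majorization $b'\maj b$ is not checked by a partial-sum criterion on $(\gamma_i)$ but is obtained directly from $a''\maj b''$ in $PMP$ plus the inclusion $\U(PMP)\hookrightarrow\U(M)$. So the monotone-box feasibility you assert as the target is not what the construction delivers and is not obviously achievable. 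Finally, the $x\mapsto 1-x$ symmetry you hope to use to cut the work in half reverses the index ordering as well as swapping (a) and (a'), so it maps a pinch of the leading pair to a pinch of the trailing pair and does not collapse the case distinction inside a single pinch; the paper handles (C1) and (C2) as genuinely different structures.
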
	
\begin{proof}
	Conjugating $b$ by a unitary	we may assume that $P_i=Q_i$ for all $i$.
Passing to central cut-downs $e_jM$, for suitable projections $e_1,\ldots,e_N\in Z$ that partition the unit, we may also assume that $c_{P_i}=1$ for all $i$.
		We will  proceed by induction on $n$ under the additional assumptions that $P_i=Q_i$ and $c_{P_i}=1$ for all $i$.

		If $n=1$ then
	$a=\alpha_1\cdot 1$ and $b=\beta_1 \cdot 1$ are multiples of the identity.
	From condition (a)  we  deduce that $(\alpha_1-r)_+\leq \beta_1$ whereas from (a') we deduce that $(1-\alpha_1-r)_+\leq 1-\beta_1$. Together they imply that $\|\alpha_1-\beta_1\|\leq r$.
	
	Let us assume now by induction that the proposition  is true when the number of projections $P_i$ is  less than a
	given $n$. Let $a$ and $b$ be as in the statement of the lemma. 
	From condition (a) with $k=1$ and from $c_{P_1}=1$ we deduce that $\beta_1\geq (\alpha_1-r)_+$.
	Just as we did before in the proof of Proposition \ref{finitealg}, let us apply  Lemma \ref{pinching}  in $\beta_1P_1+\beta_2P_2$ with a suitable  central element $0\leq \lambda\leq 1$
	(to be specified soon) so as to obtain $\rho\in Z_+$ and an element $\beta_1'P_1+\beta_2'P_2$ majorized by $\beta_1P_1+\beta_2P_2$. We have that 
	\[
	\beta_1\geq \beta_1'\geq \rho\geq \beta_2'\geq \beta_2,
	\]
	and that
	\[
	\beta_1E(P_1)+\beta_2E(P_2)=\beta_1'E(P_1)+\beta_2'E(P_2).
	\] 
	Let
	\[
	b'=\beta_1'P_1+\beta_2'P_2+\sum_{i=3}^n\beta_iP_i.
	\] 
Then for any 
	$\lambda\in Z$ such that $0\leq \lambda\leq 1$ the inequalities in (a), applied now to $a$ and $b'$,   hold  except possibly for $k=1$. The inequalities in (a') also  hold for $a$ and $b'$, except possibly for $k=2$. 
	Let us choose $\lambda$ such that  a each point of the spectrum of $Z$ either $\lambda=0$ or one of these two inequalities, $k=1$ in (a) or $k=2$ in (a'),  becomes an equality while the other one remains valid. More specifically, we choose a central element $0\leq \lambda\leq 1$ 
	such that the center is partitioned into three clopen sets satisfying the following conditions:
	\begin{enumerate}
		\item[(C1)]
		$\beta_1'=(\alpha_1-r)_+$, $\beta_1'\geq \beta_2'$, and  
		\[
		\sum_{i=2}^n (1-\alpha_{i}-r)_+E(P_i)\leq (1-\beta_2')E(P_2)+\sum_{i=3}^n (1-\beta_i)E(P_i)
		\] 
		on the first set,
		\item[(C2)]
		$\beta_1'\geq (\alpha_1-r)_+$, $\beta_1'\geq \beta_2'$, and  
		\begin{equation}\label{equaltrace}
			\sum_{i=2}^n (1-\alpha_{i}-r)_+E(P_i)=(1-\beta_2')E(P_2)+\sum_{i=3}^n (1-\beta_i)E(P_i)
		\end{equation} 
		on the second set,
		\item[(C3)]
		$\beta_1'\geq (\alpha_1-r)_+$, $\beta_1'=\beta_2'$, and  
		\[
		\sum_{i=2}^n (1-\alpha_{i}-r)_+E(P_i)\leq (1-\beta_2')E(P_2)+\sum_{i=3}^n (1-\beta_i)E(P_i)
		\] 
		on the third set.
	\end{enumerate}	
	To see that such a choice of $\lambda$  is possible, notice first that the inequalities 
	\[
	\beta_1'\geq (\alpha_1-r)_+
	\] 
	and 
	\[
	\sum_{i=2}^n (1-\alpha_{i}-r)_+E(P_i)\leq (1-\beta_2')E(P_2)+\sum_{i=3}^n (1-\beta_i)E(P_i),
	\]
	when put in terms of $\lambda$, take the general form 
	\[
	\kappa_1 \lambda\geq \gamma_1\hbox{ and }
	\kappa_2 \lambda\geq \gamma_2
	\]
	for some $\kappa_1,\kappa_2\in Z_+$ and $\gamma_1,\gamma_2\in Z_{sa}$ such that $\kappa_1\geq \gamma_1$ and $\kappa_2\geq \gamma_2$ (i.e., the inequalities are valid for $\lambda=1$). (In fact, $\kappa_1=\beta_1-\rho,$ $\gamma_1=(\alpha_1-r)_+-\rho$,  $\kappa_2=(\rho-\beta_2)E(P_2)$,
	and 
	\[
	\gamma_2=\sum_{i\geq 2}(1-\alpha_i-r)_+E(P_i)-\sum_{i\geq 2}(1-\beta_i)E(P_i)-(1-\rho)E(P_2).)
	\]	
	Let us choose
	\[
	\lambda=\max((\gamma_1)_+/\kappa_1,(\gamma_2)_+/\kappa_2).
	\] 
	These fractions are well defined in $Z$  because $Z$ is an abelian von Neumann algebra and $\kappa_1\geq (\gamma_1)_+$ and $\kappa_2\geq (\gamma_2)_+$.	Let us show that $\lambda$ is as desired. It is clear that $0\leq \lambda\leq 1$. Exploiting that
$\widehat Z$ is extremally disconnected, let us partition $\widehat Z$ into four clopen sets $X_1,X_2,X_3,X_4$ such that 
$\gamma_1\leq 0$ and $\gamma_2\leq 0$ on $X_{1}$,
$\gamma_1\geq 0$ and $\gamma_2\leq 0$ on $X_{2}$,
$\gamma_1\leq 0$ and $\gamma_2\geq 0$ on $X_{3}$, and
$\gamma_1\geq 0$ and $\gamma_2\geq 0$ on $X_{4}$.
It is straightforward to check that $\lambda=0$ on $X_1$. Thus, on this set we find ourselves in   case (C3) above. It can also be checked that   $\kappa_2\lambda=\gamma_2$ on $X_2$  and  $\kappa_1\lambda=\gamma_1$ on $X_3$. This values of $\lambda$ yield cases (C1) and (C2) above, respectively. 
Finally, partition $X_4$ into two clopen sets such that $\gamma_1\kappa_2\geq \gamma_2\kappa_1$ on one set and $\gamma_1\kappa_2\leq \gamma_2\kappa_1$ on the second. On the first of these sets we have that $\kappa_1\lambda=\gamma_1$ and on the other that $\kappa_2\lambda=\gamma_2$ (yielding again cases (C1) and (C2) above).

	Since $b'\maj b$, it suffices to prove that $a\maj b'$. Equivalently, it suffices  to prove the proposition with $b'$ in place of $b$. So  let us rename $\beta_1'$ and $\beta_2'$ as $\beta_1$ and $\beta_2$ and now assume that the conditions (C1)--(C3) for the three clopen sets described above hold for $\beta_1$ and $\beta_2$.
	
	Let us show that on the clopen sets  satisfying (C1) and (C2) we can argue by induction. 
	Indeed, restricting to the first set (while retaining the same names for our variables) we have that 
	\begin{align*}
	\sum_{i=2}^k (\alpha_{i}-r)_+E(P_i) &\leq \sum_{i=2}^k \beta_iE(Q_i)\hbox{ for all }k=2,\dots,n,\\
 \sum_{i=k}^n (1-\alpha_{i}-r)_+E(P_i) &\leq \sum_{i=k}^n (1-\beta_i)E(Q_i)\hbox{ for all }k=2,\dots,n.
 \end{align*}
Thus	
	\[
	a''=\sum_{i=2}^n\alpha_iP_i\hbox{ and }b''=\sum_{i=2}^n\beta_1P_i
	\] 
	satisfy the conditions (a) and (a') in the algebra $PMP$, where 
	$P=P_2+\cdots+P_n$. 
	(To see this we use that the center valued trace $E_P\colon PMP\to PZ$ can be computed to be $E_P(x)=\frac{E(x)}{E(P)}P$.)
	Hence,  by the 
	induction hypothesis applied in $PMP$, there exists
	$b'''\in PMP$  majorized by $b''$ and within $r$ distance of $a''$. The element $\beta_1P_1+b''$ is within $r$ of $a$ and $\beta_1P_1+b''\maj b$.
This proves  the induction step.
	
	Suppose now that we are in the second set. Consider the elements 
	\[
	a''=\sum_{i=2}^n(1- (1-\alpha_{i}-r)_+)P_i, \quad b''=\sum_{i=2}^n \beta_iP_i
	\]
	in $PMP$, where $P=P_2+\cdots+P_n$. From the conditions (a') applied to $a$ and $b$  we get that $a''$ and $b''$ satisfy  the conditions (a') with $r=0$. Moreover, from \eqref{equaltrace} we deduce that the center-valued traces
	of these two elements agree, i.e., $E_P(a'')=E_P(b'')$ (recall that we have relabeled $\beta_2'$ as $\beta_2$, so \eqref{equaltrace} is now valid with $\beta_2$ in place of $\beta_2'$). This in turn implies  that $a''$ and $b''$ satisfy the conditions (a) with $r=0$ as well. By the induction hypothesis with $r=0$  applied in the von Neumann algebra $PMP$  we get that
	$a''\maj b''$ in $PMP$.  Notice that 
	\[
	1- (1-\alpha_{i}-r)_+=\min(1,\alpha_i+r).
	\] 
	From this we easily deduce that $a''$ is within a distance $r$ of $\sum_{i=2}^n \alpha_iP_i$. Now, from the condition (a') applied to $a$ and $b$ with $k=1$, and keeping the equality \eqref{equaltrace} in mind, we deduce that $(\alpha_1+r)E(P_1)\geq \beta_1E(P_1)$.  This implies that $\alpha_1+r\geq \beta_1$ (since $c_{P_1}=1$, which implies that the subset 
	of $\widehat Z$ where  $E(P_1)$ is strictly positive is dense in $\widehat Z$). Similarly, 	from the condition (a) with $k=1$ we deduce  that  $\beta_1\geq \alpha_1-r$.
	So $\|\alpha_1-\beta_1\|\leq r$. Therefore, $\beta_1P_1+a''$ is within a distance $r$ of $a$ and  $\beta_1P_1+a''\maj b$. This again proves the induction step in this case.
	
	Let us examine now the third set, where $\beta_1=\beta_2$ while the conditions (a) and (a') remain valid.  Suppose more generally that 
	for some $k=2,\ldots,n$ we have that $\beta_1=\cdots=\beta_k$ while the conditions (a) and (a') are valid.  Suppose first that $k<n$. Let us apply Lemma \ref{pinching} to the element
	$\beta_1(P_1+\cdots +P_k)+\beta_{k+1}P_{k+1}$ with a suitable central element $0\leq \lambda\leq 1$ (to be specified soon). Call $\beta_1'(P_1+\cdots +P_k)+\beta_{k+1}'P_{k+1}$ the resulting element.
	As before, we can choose $\lambda$ such that the conditions (a) and (a')
	remain valid for $a$ and 
	\[
	b'=\beta_1'(P_1+\cdots+P_k)+\beta_{k+1}'P_{k+1}+\sum_{i>k+1} \beta_iP_i
	\]
	and such  that either one of the following three cases occurs after restricting to suitable clopen sets that partition $\widehat Z$: 
	\begin{enumerate}
		\item[(C1')]
		$\beta_1'=(\alpha_1-r)_+$, 
		\item[(C2')]	
		\begin{equation}\label{equaltrace2}
			\sum_{i=k+1}^n (1-\alpha_{i}-r)_+E(P_i)= (1-\beta_{k+1}')E(P_{k+1})+\sum_{i=k+2}^n (1-\beta_i)E(P_i),
		\end{equation}
		\item[(C3')]
		$\beta_1'=\beta_{k+1}'$.
	\end{enumerate} 
	Let us rename $\beta_1'$ and $\beta_{k+1}'$ as $\beta_1$ and $\beta_{k+1}$, respectively. We have already dealt with the first of these three cases. The second is dealt with similarly as before: The elements
	\[
	a''=\sum_{i=k+1}^n (1-(1-\alpha_{i}-r)_+)P_i\hbox{ and }b''=\sum_{i=k+1}^n\beta_iP_i 
	\]
	satisfy the induction hypotheses with $r=0$ in the von Neumann algebra $PMP$, where 
	$P=P_{k+1}+\cdots+P_n$. On the other hand, keeping in mind the equality \eqref{equaltrace2}, we deduce that 
	\[
	a'''=\sum_{i=1}^k\alpha_iP_i, \quad b'''=\sum_{i=1}^k \beta_iP_i
	\]
	satisfy conditions (a) and (a') with the same $r$ in the von Neumann algebra $(1-P)M(1-P)$.
	We can thus apply the induction hypothesis in both cases to get the desired result.
	
	The remaining case to be considered is when $k=n$, i.e.,  $\beta_1=\cdots=\beta_n$ and the conditions (a) and (a') are valid. From the condition (a) with $k=1$ we deduce that
	$\beta_1+r\geq \alpha_1$, while from the condition (a') with $k=n$ we deduce that $\alpha_n+r\geq \beta_n$ (here we use that $c_{P_i}=1$ for all $i$). This clearly implies that $\|\alpha_i-\beta_i\|\leq r$ for all $i$ implying that
	$\|a'-b'\|\leq r$, as desired.
\end{proof}

\begin{proposition}\label{finitevNmaj}
	Suppose that $M$ is a finite von Neumann algebra. Let $r\geq 0$. If $a,b\in M_+$ are contractions such that $(a-r)_+\smajtau b$
	and $(1-a-r)_+\smajtau 1-b$, then $a$ is within a distance $r$
	of  $\overline{\co\{ubu^*\mid u\in \U(M)\}}$.
\end{proposition}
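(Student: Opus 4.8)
The plan is to prove the statement first when $a$ and $b$ have finite spectrum, using Proposition \ref{theform}, Lemma \ref{tracialineq1} and Proposition \ref{finitealgmaj}, and then to remove the finite‑spectrum hypothesis by a perturbation argument.

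\emph{Finite-spectrum case.} First I would write $a=\sum_{i=1}^n\alpha_iP_i$ and $b=\sum_{i=1}^n\beta_iQ_i$ simultaneously in the form \eqref{formab} of Proposition \ref{theform}, and (as in the proofs of Propositions \ref{finitealg} and \ref{finitealgmaj}) pass to central cut-downs $e_jM$ so as to assume $c_{P_i}=1$ for all $i$; this is harmless since the conclusion localises over a central partition of unity. Because the $(\alpha_i)$ are decreasing, $(a-r)_+=\sum_{i=1}^n(\alpha_i-r)_+P_i$ is again of the form \eqref{formab} (paired with $b$). Hence Lemma \ref{tracialineq1} applied to $(a-r)_+\smajtau b$ gives $\sum_{i=1}^k(\alpha_i-r)_+P_i\smajtau\sum_{i=1}^k\beta_iQ_i$ for every $k$; evaluating these on the traces $\delta_x\circ E$ ($x\in\widehat Z$) at $t=0$, exactly as in the proof of Proposition \ref{finitevNsmaj}, yields condition (a) of Proposition \ref{finitealgmaj}. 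Since $1-a=\sum_{i=1}^n(1-\alpha_i)P_i$ has \emph{increasing} coefficients, after relabelling the index set by $i\mapsto n+1-i$ the elements $(1-a-r)_+$ and $1-b$ are of the form \eqref{formab}; feeding $(1-a-r)_+\smajtau 1-b$ through Lemma \ref{tracialineq1}, evaluating again on the $\delta_x\circ E$ at $t=0$, and undoing the relabelling produces condition (a'). Proposition \ref{finitealgmaj} now gives $b'\in M_+$ with $b'\maj b$ and $\|a-b'\|\le r$; as $b'\maj b$ means $b'\in\overline{\co\{ubu^*\mid u\in\U(M)\}}$, the element $a$ lies within distance $r$ of that set.

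\emph{Reduction to the finite-spectrum case.} Given $\epsilon>0$, pick positive contractions $a',b'\in M$ of finite spectrum with $\|a-a'\|<\epsilon$ and $\|b-b'\|<\epsilon$. By Lemma \ref{basicsmaj}(ii), $(a'-r-\epsilon)_+\smaj(a-r)_+$ and $(1-a'-r-\epsilon)_+\smaj(1-a-r)_+$; and from $b\le b'+\epsilon\cdot 1$ and $1-b\le(1-b')+\epsilon\cdot 1$, Lemma \ref{basicsmaj}(i) gives $b\smaj b'+\epsilon\cdot 1$ and $1-b\smaj(1-b')+\epsilon\cdot 1$. Chaining these through the hypotheses using Proposition \ref{easyimplication} and the transitivity of $\smajtau$, and then using that $c\smajtau d+\epsilon\cdot 1$ implies $(c-\epsilon)_+\smajtau d$ (replace $t$ by $t+\epsilon$ in the defining inequalities), one obtains $(a'-(r+2\epsilon))_+\smajtau b'$ and $(1-a'-(r+2\epsilon))_+\smajtau 1-b'$. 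The finite-spectrum case thus puts $a'$ within $r+2\epsilon$ of $\overline{\co\{ub'u^*\mid u\in\U(M)\}}$; since $\|ub'u^*-ubu^*\|<\epsilon$ for every $u\in\U(M)$ and $\|a-a'\|<\epsilon$, the element $a$ lies within $r+4\epsilon$ of $\overline{\co\{ubu^*\mid u\in\U(M)\}}$. Letting $\epsilon\to0$ completes the proof.

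The genuine difficulty lies in the finite-spectrum case, and more precisely in matching the output of Lemma \ref{tracialineq1} — applied once with $(a-r)_+$ in place of $a$, and once with $1-b$ in place of $b$ — against the precise forms of conditions (a) and (a') in Proposition \ref{finitealgmaj}; the order reversal forced by the increasing coefficients $(1-\alpha_i-r)_+$ is the step most prone to error. The remaining ingredients — the central cut-down reduction, the reassembly over a central partition of unity, and the perturbation estimates — are routine.
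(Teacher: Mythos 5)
Your proof is correct and follows essentially the same route as the paper: reduce to the finite-spectrum case by perturbation (the paper does this first, you do it second, but the content is identical), put $a$ and $b$ in the simultaneous form of Proposition \ref{theform}, extract conditions (a) and (a') of Proposition \ref{finitealgmaj} from the two hypotheses via Lemma \ref{tracialineq1} applied to the center-valued trace, and invoke Proposition \ref{finitealgmaj}. Your index-reversal observation for deriving (a') from $(1-a-r)_+\smajtau 1-b$ and your tracking of the $\epsilon$'s are both sound (you end up with $r+4\epsilon$ rather than the paper's slightly sharper bookkeeping, but the limit $\epsilon\to 0$ makes this immaterial).
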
	

\begin{proof}
	Let $\epsilon>0$.	 Let $a'$ and $b'$ be positive contractions of finite spectrum   such that $\|a-a'\|<\epsilon/2$ and $\|b-b'\|<\epsilon/2$. Then,  using Lemma \ref{basicsmaj},  we find that  $(a'-r-\epsilon)_+\smajtau b'$ and 
	$(1-a'-r-\epsilon)\smajtau 1-b'$ (see the proof of Proposition \ref{finitevNsmaj}). Let us express $a'$ and $b'$ in the form of \eqref{formab} from Proposition \ref{theform}:
	\[
	a'=\sum_{i=1}^n \alpha_iP_i,\quad b'=\sum_{i=1}^n \beta_i Q_i.
	\]
	Conjugating $b'$ by a unitary  assume that $Q_i=P_i$ for all $i$. Cutting down the center by central projections  we assume that
	$c_{P_i}=1$ for all $i$.  By Lemma \ref{tracialineq1},  from $(a'-r-\epsilon)_+\smajtau b'$ we deduce  that
	\begin{enumerate}
		\item [(a)] $\displaystyle \sum_{i=1}^k (\alpha_{i}-r-\epsilon)_+E(P_i)\leq \sum_{i=1}^k \beta_iE(P_i)$ for all $k=1,\dots,n$,
	\end{enumerate}	
	and from $(1-r-\epsilon-a')_+\smajtau (1-b')$ that
	\begin{enumerate}
		\item [(a')] $\displaystyle \sum_{i=k}^n (1-\alpha_{i}-r-\epsilon)_+E(P_i)\leq \sum_{i=k}^n (1-\beta_i)E(P_i)$ for all $k=1,\dots,n$,
	\end{enumerate}	
	By Proposition \ref{finitealgmaj}, there exists $b''$ majorized by $b'$ and within $r+\epsilon$ distance of $a'$. Since $\epsilon$ can be arbitrarily small, this proves the proposition.
\end{proof}

We now proceed to extend Propositions \ref{finitevNsmaj} and \ref{finitevNmaj} to arbitrary von Neumann algebras. This is accomplished in Propositions \ref{vNsubmajorization}
and \ref{vNmajorization} below.

\begin{lemma}\label{tracialineq2}
	Let $a,b\in M_+$ be  as follows:
	\[
	a=\sum_{i=1}^n \alpha_iP_i,\quad b=\sum_{i=1}^n\beta_iP_i,
	\]
	where $(P_i)_{i=1}^n$ are orthogonal projections adding up to 1 and such that $c_{P_i}=1$ for all $i$ and where $(\alpha_i)_{i=1}^n$ and $(\beta_i)_{i=1}^n$ are decreasing nonnegative scalar  coefficients. Suppose that $a\smajtau b$.
	Then
	
	\begin{enumerate}[(a)]
		\item
		For all traces $\tau\in \T(M)$ and all $k=1,\dots,n$ we have		
		\[
		\tau\Big(\sum_{i=1}^k \alpha_iP_i\Big)\leq \tau\Big(\sum_{i=1}^k \beta_iP_i\Big).
		\]

		\item 
		$\alpha_1\leq \beta_1$. 
		
		\item 
		For each $k=2,\dots, n$ 	
		if $\alpha_k>\beta_k$  then $P_k\propto \sum_{i<k} P_i$.
		(Here $P\propto Q$ means that $P\smaj Q^{\oplus N}$ for some $N$.)
	\end{enumerate}
\end{lemma}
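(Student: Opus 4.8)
The plan is to dispatch (a) and (b) immediately and concentrate on (c). For (a): nonnegative scalars are in particular positive central elements, the sequences $(\alpha_i)_{i=1}^n$ and $(\beta_i)_{i=1}^n$ are decreasing, the elements $a,b$ have finite spectrum, and the projections appearing in $a$ and $b$ coincide (so trivially $P_i\sim P_i$); hence $(a,b)$ is a pair of the form \eqref{formab} of Proposition \ref{theform}, and Lemma \ref{tracialineq1} applies and yields $\sum_{i=1}^k\alpha_iP_i\smajtau\sum_{i=1}^k\beta_iP_i$ for each $k$. Reading off the defining inequality of $\smajtau$ at $t=0$ gives (a). For (b): I first note that $a\smajtau b$ forces $\|a\|\leq\|b\|$. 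Indeed, for $t\geq\|b\|$ we have $(b-t)_+=0$, so $\tau((a-t)_+)\leq\tau((b-t)_+)=0$ for every $\tau\in\T(M)$, and applying this to $\tau=\tau_{\{0\}}$ (the trace equal to $\infty$ off $0$) forces $(a-t)_+=0$; letting $t\downarrow\|b\|$ gives $\|a\|\leq\|b\|$. Since $c_{P_1}=1$ forces $P_1\neq 0$, we have $\|a\|=\alpha_1$ and $\|b\|=\beta_1$, and (b) follows.

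For (c), fix $k\in\{2,\dots,n\}$ with $\alpha_k>\beta_k$ and put $Q=\sum_{i<k}P_i$. The point is to apply the defining inequality of $a\smajtau b$ at the single value $t=\beta_k$. Since $(\beta_i)$ is decreasing, $(b-\beta_k)_+=\sum_{i<k}(\beta_i-\beta_k)P_i\leq\beta_1Q$, whereas $(a-\beta_k)_+=\sum_{i=1}^n(\alpha_i-\beta_k)_+P_i\geq(\alpha_k-\beta_k)P_k$; using that traces are monotone on $M_+$, this gives
\[
(\alpha_k-\beta_k)\,\tau(P_k)\leq\beta_1\,\tau(Q)\qquad\text{for every }\tau\in\T(M).
\]
Taking $\tau=\tau_J$, where $J$ is the closed two-sided ideal of $M$ generated by $Q$, the right-hand side vanishes, and since $\alpha_k-\beta_k>0$ we conclude $\tau_J(P_k)=0$, i.e.\ $P_k\in J$. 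It then remains to pass from ``$P_k$ lies in the closed two-sided ideal generated by the projection $Q$'' to ``$P_k\propto Q$''. For this I would use the standard facts that such a $P_k$ is Cuntz-subequivalent to $Q^{\oplus N}$ for some $N\in\N$ and that Cuntz subequivalence of projections is implemented by a partial isometry: writing $P_k=v^*v$ with $vv^*\leq Q^{\oplus N}$ and $\|v\|\leq 1$, we get $P_k\leq v^*Q^{\oplus N}v$, whence $P_k\smaj v^*Q^{\oplus N}v\smaj Q^{\oplus N}$ by Lemma \ref{basicsmaj}(i) and the transitivity of $\smaj$, that is, $P_k\propto Q$.

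The main (indeed only) obstacle is this last implication in (c): extracting $P_k\propto Q$ from the membership of $P_k$ in the closed ideal generated by $Q$. Everything else is immediate from the definitions, from Lemma \ref{tracialineq1}, and from the crucial—yet elementary—point that the ideal traces $\tau_J$ are already available in $\T(M)$, which is exactly what lets the argument go through even when $M$ is, say, a type III factor, where no nontrivial finite or semifinite trace exists.
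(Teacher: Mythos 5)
Your argument is correct but differs from the paper's in how it closes part (c). For (c), the paper first passes through Lemma \ref{tracialineq1} to obtain $\sum_{i\le k}\alpha_iP_i\smajtau\sum_{i\le k}\beta_iP_i$, evaluates at an arbitrary $\tau$, subtracts $\beta_k$ to derive $\tau(P_k)\le N\,\tau(\sum_{i<k}P_i)$ for all $\tau\in\T(M)$, and then invokes \cite[Theorem 8.4.3(vii)]{KadRing} to conclude $P_k\propto\sum_{i<k}P_i$. You instead read the defining inequality of $a\smajtau b$ directly at the single value $t=\beta_k$, arrive at essentially the same tracial inequality, and then specialize to the ideal trace $\tau_J$ to deduce that $P_k$ lies in the closed two-sided ideal $J$ generated by $Q$. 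This use of $\tau_J$ is very much in the spirit of the paper, avoids the external citation, and handles the $\beta_k=0$ case without a separate remark. The one step you flag as an ``obstacle'' — that a projection $P_k$ in the norm-closed ideal generated by a projection $Q$ satisfies $P_k\precsim_{\mathrm{Cu}}Q^{\oplus N}$ for some finite $N$, and hence (in a von Neumann algebra) is dominated by $Q^{\oplus N}$ via a partial isometry — is indeed true, but it is the crux of your argument and deserves to be spelled out rather than waved through as a ``standard fact.'' One clean way: choose $z=\sum_{i=1}^{N}x_iQy_i$ in the algebraic ideal with $\|P_k-z\|<1/5$, observe that $z^*z\le C\,v^*Q^{\oplus N}v$ for some constant $C$ and some column $v$, so $z^*z\precsim_{\mathrm{Cu}}Q^{\oplus N}$; then by \cite[Lemma 2.2]{kirchberg-rordam0}, $(P_k-1/2)_+\precsim_{\mathrm{Cu}}z^*z$, and since $P_k$ is a projection this gives $P_k\precsim_{\mathrm{Cu}}Q^{\oplus N}$, which in a von Neumann algebra upgrades to a partial isometry as you describe. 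With that detail filled in, your route is a valid and arguably more self-contained alternative to the paper's appeal to Kadison–Ringrose.
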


\begin{proof}
	Conditions (a) and (b) follow at once from Lemma \ref{tracialineq1}.
	Suppose now that $\alpha_k>\beta_k>0$  for some $k$. 
	By Lemma \ref{tracialineq1}, $\sum_{i=1}^k\alpha_iP_i\smajtau\sum_{i=1}^{k}\beta_iP_i$.
	Hence,
	\[
	\tau\Big(\sum_{i=1}^k (\alpha_i-\beta_k)P_j\Big)\leq \tau\Big(\sum_{i=1}^{k-1}(\beta_i-\beta_k) P_i\Big)
	\]
	for all $\tau\in \T(M)$.
	Since we have assumed that $\alpha_k-\beta_k>0$  this implies 
	that 
	\[
	\tau(P_k)\leq N\tau(P_1+\cdots+P_{k-1})
	\] for all $\tau\in \T(M)$ and some suitable positive integer $N$ (e.g., $N\geq \frac{\beta_i-\beta_k}{\alpha_k-\beta_k}$ for all $i$). By \cite[Theorem 8.4.3 (vii)]{KadRing},  this implies that 
	$P_k\propto P_1+\cdots + P_{k-1}$.	
\end{proof}

We start with the  submajorization result. First, a lemma. 
\begin{lemma}\label{pismaj}
	Let $P_1,\ldots,P_n$ be pairwise orthogonal projections such that $P_1$
	is properly infinite and $P_i\precsim P_1$ for all $i$. Let $\alpha_1,\ldots,\alpha_n$
	be  central positive elements such that $\alpha_i\leq \alpha_1$ for all $i$. Then 
	\begin{align*}
		\sum_{i=1}^n\alpha_iP_i\smaj \alpha_1P_1.
	\end{align*} 
\end{lemma}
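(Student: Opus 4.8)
The plan is to reduce $\sum_{i=1}^n \alpha_i P_i \smaj \alpha_1 P_1$ to a finite number of single-summand estimates using Lemma~\ref{orthosum} (orthogonal sums of submajorizations), after first arranging for enough room inside $P_1$. Since $P_1$ is properly infinite and $P_i \precsim P_1$ for all $i$, I can find pairwise orthogonal subprojections $P_1^{(1)}, \dots, P_1^{(n)} \le P_1$ with $P_i \sim P_1^{(i)}$ for each $i$; indeed, because $P_1$ is properly infinite it absorbs $n$ orthogonal copies of itself, so $\sum_i P_1^{(i)}$ can be taken $\le P_1$ with each $P_1^{(i)} \sim P_1$ if desired, but $P_i \sim P_1^{(i)}$ suffices. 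The point is that each $\alpha_i P_i$ is then unitarily equivalent (hence submajorized, in fact majorized in the corner) to $\alpha_i P_1^{(i)}$, and $\alpha_i P_1^{(i)} \le \alpha_1 P_1^{(i)} \le \alpha_1 P_1$ using $\alpha_i \le \alpha_1$ (here $\alpha_i$ central, so $\alpha_i P_1^{(i)} \le \alpha_1 P_1^{(i)}$ is just the order on positive elements). By Lemma~\ref{basicsmaj}(i) the order relation is stronger than submajorization, so $\alpha_i P_i \smaj \alpha_1 P_1$ for each single $i$.

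The subtlety is that I cannot simply add these $n$ submajorizations by Lemma~\ref{orthosum}, because the left-hand sides $\alpha_i P_i$ are orthogonal (the $P_i$ are), but the right-hand sides are all the \emph{same} element $\alpha_1 P_1$, which is certainly not a sum of $n$ mutually orthogonal pieces in an obvious way. To fix this I instead peel off the summands one at a time and exploit properly-infiniteness to regenerate a fresh orthogonal copy of (a piece of) $\alpha_1 P_1$ at each stage. Concretely: write $P_1 = R_1 + R_1'$ with $R_1 \sim P_1 \sim R_1'$ (possible since $P_1$ is properly infinite). Then $\alpha_1 P_1 = \alpha_1 R_1 + \alpha_1 R_1'$ with the two summands orthogonal, $\alpha_2 P_2 \smaj \alpha_1 R_1'$ (as above, since $P_2 \precsim P_1 \sim R_1'$ and $\alpha_2 \le \alpha_1$), and $\sum_{i \ne 2} \alpha_i P_i$ — now involving $n-1$ summands with $P_1$ replaced by $R_1 \sim P_1$ still properly infinite — is, by induction on $n$, submajorized by $\alpha_1 R_1$. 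Since $\alpha_1 R_1 \perp \alpha_1 R_1'$ and $\sum_{i\ne 2}\alpha_i P_i \perp \alpha_2 P_2$, Lemma~\ref{orthosum} yields $\sum_{i=1}^n \alpha_i P_i \smaj \alpha_1 R_1 + \alpha_1 R_1' = \alpha_1 P_1$.

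The base case $n = 1$ is trivial ($\alpha_1 P_1 \smaj \alpha_1 P_1$). For the inductive step I should double-check the hypotheses transfer: with $P_1$ replaced by $R_1$, I need $R_1$ properly infinite (true, as $R_1 \sim P_1$ and proper infiniteness is preserved under Murray--von Neumann equivalence) and $P_i \precsim R_1$ for $i \ne 2$ (true, as $P_i \precsim P_1 \sim R_1$), and $\alpha_i \le \alpha_1$ still holds. One organizational point: to apply Lemma~\ref{orthosum} I technically want $a_1 a_2 = 0$ and $b_1 b_2 = 0$ as products of selfadjoints, which holds here since the supporting projections are orthogonal and the $\alpha_i$ are central (so they commute with everything and the products vanish).

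\begin{proof}
We argue by induction on $n$. The case $n=1$ is immediate. Suppose the statement holds for $n-1$ summands. Since $P_1$ is properly infinite, there exist orthogonal projections $R, R' \le P_1$ with $R + R' = P_1$ and $R \sim R' \sim P_1$. Both $R$ and $R'$ are again properly infinite, and since $P_i \precsim P_1 \sim R'$ we may pick a projection $S \le R'$ with $P_n \sim S$; let $u \in M$ be a unitary (or partial isometry extended to a unitary, using that $R'$ and $1-R'$ are large enough) implementing this, so that $\alpha_n P_n$ and $\alpha_n S$ are unitarily equivalent, hence $\alpha_n P_n \smaj \alpha_n S$. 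Since $\alpha_n \le \alpha_1$ and $\alpha_n, \alpha_1$ are central, $\alpha_n S \le \alpha_1 S \le \alpha_1 R'$, so by Lemma \ref{basicsmaj} (i), $\alpha_n S \smaj \alpha_1 R'$, and by transitivity $\alpha_n P_n \smaj \alpha_1 R'$.

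On the other hand, $P_1, P_2, \dots, P_{n-1}$ together with $R$ in place of $P_1$ satisfy the hypotheses of the lemma (with $n-1$ summands), since $R$ is properly infinite, $P_i \precsim P_1 \sim R$ for $i \le n-1$, and $\alpha_i \le \alpha_1$. Hence by the induction hypothesis,
\[
\alpha_1 R + \sum_{i=2}^{n-1} \alpha_i P_i \smaj \alpha_1 R.
\]
Wait --- more precisely, applying the inductive statement to the projections $R, P_2, \dots, P_{n-1}$ with central coefficients $\alpha_1, \alpha_2, \dots, \alpha_{n-1}$ gives
\[
\alpha_1 R + \sum_{i=2}^{n-1}\alpha_i P_i \smaj \alpha_1 R,
\]
so, recalling $P_1 = R + R'$ with $R \perp R'$ and $\alpha_1$ central,
\[
\sum_{i=1}^{n-1} \alpha_i P_i = \alpha_1 R + \alpha_1 R' + \sum_{i=2}^{n-1}\alpha_i P_i,
\]
which we handle as follows. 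The elements $\alpha_1 R + \sum_{i=2}^{n-1}\alpha_i P_i$ and $\alpha_n P_n$ are orthogonal (their supporting projections $R + P_2 + \cdots + P_{n-1}$ and $P_n$ are), and likewise $\alpha_1 R$ and $\alpha_1 R'$ are orthogonal. Since
\[
\alpha_1 R + \sum_{i=2}^{n-1}\alpha_i P_i \smaj \alpha_1 R, \qquad \alpha_n P_n \smaj \alpha_1 R',
\]
Lemma \ref{orthosum} yields
\[
\sum_{i=1}^{n} \alpha_i P_i = \Big(\alpha_1 R + \sum_{i=2}^{n-1}\alpha_i P_i\Big) + \alpha_n P_n \smaj \alpha_1 R + \alpha_1 R' = \alpha_1 P_1,
\]
completing the induction.
\end{proof}
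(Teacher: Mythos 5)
There is a genuine gap. In the final display you write
\[
\sum_{i=1}^{n} \alpha_i P_i = \Big(\alpha_1 R + \sum_{i=2}^{n-1}\alpha_i P_i\Big) + \alpha_n P_n,
\]
but the left side is $\alpha_1 P_1 + \sum_{i=2}^{n}\alpha_i P_i = \alpha_1 R + \alpha_1 R' + \sum_{i=2}^{n}\alpha_i P_i$, so you have silently dropped the term $\alpha_1 R'$. (The same slip is in your preamble: the inductive statement for $(R,P_3,\dots,P_n)$ gives $\alpha_1 R+\sum_{i\ge 3}\alpha_i P_i \smaj \alpha_1 R$, not $\sum_{i\ne 2}\alpha_i P_i \smaj \alpha_1 R$; these differ by $\alpha_1 R'$.) What Lemma~\ref{orthosum} actually yields from the two submajorizations you have established is the weaker statement $\alpha_1 R + \sum_{i=2}^{n}\alpha_i P_i\smaj \alpha_1 P_1$. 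To recover the lemma you would need the stronger input $\alpha_1 R' + \alpha_n P_n \smaj \alpha_1 R'$, and that is precisely the $n=2$ case of the lemma. It does not follow from $\alpha_n P_n \smaj \alpha_1 R'$ (which you proved), nor can you iterate the same split inside $R'$---that just pushes the missing piece one level deeper without terminating. The $n=2$ case requires the ``absorption'' idea: $R'+P_n\sim R'$ because $R'$ is properly infinite and $P_n\precsim R'$, so if $w$ is a partial isometry with $w^*w=R'$, $ww^*=R'+P_n$, then centrality of $\alpha_1$ gives $w(\alpha_1 R')w^*=\alpha_1(R'+P_n)\ge \alpha_1 R'+\alpha_n P_n$, whence $\alpha_1 R'+\alpha_n P_n\smaj \alpha_1 R'$.

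Once you notice that you need this partial-isometry absorption, the induction becomes unnecessary: the paper does it in a single step, for all $n$ at once. It writes $P_1=P_1'+Q_2+\cdots+Q_n$ with $P_1'\sim P_1$ and $Q_i\sim P_i$ (possible since $P_1$ is properly infinite and $P_i\precsim P_1$), takes a partial isometry $v$ with $vP_1'v^*=P_1$ and $vQ_iv^*=P_i$, and computes, using centrality of $\alpha_1$, that $v(\alpha_1 P_1)v^*=\alpha_1(P_1+\cdots+P_n)\ge \sum_i\alpha_i P_i$; Lemma~\ref{basicsmaj}(i) and transitivity of $\smaj$ finish the proof. Your decomposition into $R,R'$ is a reasonable instinct, but the key move---conjugating $\alpha_1 P_1$ by a partial isometry that spreads it over all the $P_i$, exploiting that $\alpha_1$ is central---is the part you omitted, and it is exactly what is missing from your argument.
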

\begin{proof}
	Let us write $P_1=P_1'+Q_2+\cdots+Q_n$, where $P_1',Q_2,\ldots,Q_n$ are pairwise orthogonal projections such that $P_1'\sim P_1$ and $Q_i\sim P_i$ for all $i\geq 2$.  Let $v\in M$ be a partial isometry such that 
	$vP_1'v^*=P_1$ and $vQ_iv^*=P_i$ for $i\geq 2$. Then
	\[
	v(\alpha_1P_1)v^*=\alpha_1vP_1'v^*+\sum_{i=2}^n\alpha_1vQ_iv^*=
	\sum_{i=1}^n\alpha_1P_i \geq \sum_{i=1}^n\alpha_iP_i.
	\] 
The result now follows from Lemma \ref{basicsmaj} (i).
\end{proof}

\begin{proposition}\label{vNsubmajorization}
	If $a,b \in M_+$ are such that  
 $a\smajtau b$, then $a\smaj b$.	
\end{proposition}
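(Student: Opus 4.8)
The plan is to reduce, via a sequence of simplifications, to a situation already handled by the finite von Neumann algebra case (Proposition \ref{finitevNsmaj}) together with the infinite-case lemma (Lemma \ref{pismaj}). First I would reduce to the case that $a$ and $b$ have finite spectrum, exactly as in the proof of Proposition \ref{finitevNsmaj}: using Lemma \ref{basicsmaj}, approximating $a$ and $b$ by finite-spectrum contractions $a'$ and $b'$ within $\epsilon$ preserves tracial submajorization up to the cutoffs $(a'-2\epsilon)_+ \smajtau b'$, and a proof that $(a'-2\epsilon)_+\smaj b'$ yields $a\smaj b$ after letting $\epsilon\to 0$. So I assume $a=\sum_{i=1}^n\alpha_iP_i$ and $b=\sum_{i=1}^n\beta_iP_i$ in the normal form of Proposition \ref{theform} (conjugating $b$ by a unitary so the projections coincide), and — cutting the center into pieces — I assume $c_{P_i}=1$ for all $i$, and that the $\alpha_i$, $\beta_i$ are \emph{scalars} in decreasing order.

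Next I would split $M=M_{\mathrm{fin}}\oplus M_{\infty}$ into its finite and properly infinite parts via a central projection; on the finite summand Proposition \ref{finitevNsmaj} already gives the conclusion, so the real work is the properly infinite case. Assume then $M$ is properly infinite. Using \cite[Theorem 6.3.7]{KadRing} and comparison, after further central cut-downs I may assume each $P_i$ is either finite or properly infinite. The key structural input from Lemma \ref{tracialineq2}(c) is that whenever $\alpha_k>\beta_k$ one has $P_k\propto P_1+\cdots+P_{k-1}$; combined with Lemma \ref{basicsmaj}, Lemma \ref{orthosum}, and Proposition \ref{reducetopositive} (to move terms around into orthogonal blocks) this should let me absorb the ``bad'' indices $k$ (those with $\alpha_k>\beta_k$) into the earlier block. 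The indices with $\alpha_k\le\beta_k$ are harmless since then $\alpha_kP_k\le\beta_kP_k$ and Lemma \ref{basicsmaj}(i) applies. When a leading projection $P_1$ is properly infinite and dominates the others, Lemma \ref{pismaj} handles an entire block $\sum_i\alpha_iP_i\smaj\alpha_1P_1$, and since $\alpha_1\le\beta_1$ (Lemma \ref{tracialineq2}(b)) we get $\alpha_1P_1\le\beta_1P_1\smaj b$.

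The main obstacle I anticipate is organizing the properly infinite case cleanly: one must interleave the block-absorption argument (using $P_k\propto\sum_{i<k}P_i$) with the finite-vs-properly-infinite dichotomy for the individual projections, because Lemma \ref{pismaj} requires a \emph{properly infinite} dominating projection, whereas some $P_i$ in a given block may be finite. I expect the right way through is an induction on $n$ analogous to Proposition \ref{finitealg}: peel off $P_1$, and on the central set where $\alpha_1<\beta_1$ strictly, or where $P_1$ is properly infinite, use Lemma \ref{pismaj} or Lemma \ref{basicsmaj} directly; on the complementary set invoke the induction hypothesis in $(1-P_1)M(1-P_1)$ after checking via Lemma \ref{tracialineq1} (and Lemma \ref{hereditary}) that the truncated elements still satisfy tracial submajorization there. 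A delicate point is that cutting down by $1-P_1$ may destroy $c_{P_i}=1$, so one re-partitions the center and re-enters the induction; since the number of projections strictly decreases this terminates. Gluing the central pieces back together with Lemma \ref{orthosum} completes the proof.
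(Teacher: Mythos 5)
Your proposal identifies the right ingredients — reduction to finite spectrum and scalar coefficients with $c_{P_i}=1$, the finite case via Proposition \ref{finitevNsmaj}, Lemma \ref{pismaj} for properly infinite blocks, and Lemma \ref{tracialineq2} for the structural constraints — but the organizational plan you sketch for the properly infinite case would not go through as stated, and it differs from what the paper actually does.

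The main difficulty is your proposal to ``peel off $P_1$'' and induct in $(1-P_1)M(1-P_1)$, in analogy with Proposition \ref{finitealg}. This analogy breaks down for two reasons. First, the hypothesis $a\smajtau b$ gives you, via Lemma \ref{tracialineq1}, the \emph{cumulative-from-the-top} inequalities $\sum_{i=1}^k\alpha_iP_i\smajtau\sum_{i=1}^k\beta_iP_i$; it does \emph{not} give you $\sum_{i\geq 2}\alpha_iP_i\smajtau\sum_{i\geq 2}\beta_iP_i$, so after discarding the $P_1$ term there is no tracial submajorization left to feed the induction hypothesis. Second, the pinching device (Lemma \ref{pinching} together with the choice of $\lambda$ so that $\beta_1'E(P_1)=\alpha_1E(P_1)$ on part of the center) that makes the $P_1$-cancellation legitimate in Proposition \ref{finitealg} depends essentially on the center-valued trace $E$, which is unavailable once $M$ is properly infinite.

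The paper's proof sidesteps this by a different organizational step that you do not mention: after the usual central cut-downs it further arranges for the projections $P_1,\dots,P_n$ to be \emph{pairwise Murray--von Neumann comparable} (so there is a well-defined ``largest'' one), each finite or properly infinite. It then takes $P_k$ to be the largest projection, necessarily properly infinite in the non-finite case, and inducts on $n$ as follows. If $k<n$, Lemma \ref{tracialineq1} gives $\sum_{i\le k}\alpha_iP_i\smajtau\sum_{i\le k}\beta_iP_i$ (a \emph{head}, not a tail, so the cumulative inequalities apply), and the induction hypothesis plus Lemma \ref{pismaj} (to absorb $\sum_{i\ge k}\alpha_iP_i$ into $\alpha_kP_k+\sum_{i>k}\beta_iP_i$) finishes. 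If $k=n$, either $\alpha_n\le\beta_n$ and the tail is trivial by Lemma \ref{basicsmaj}(i) together with induction on the first $n-1$ projections, or $\alpha_n>\beta_n$, in which case Lemma \ref{tracialineq2}(c) plus pairwise comparability forces $P_n\precsim P_{k'}$ for some $k'<n$, reducing to the case $k<n$. Your plan gestures at these pieces but lacks the pairwise-comparability reduction and the head-block organization that make the induction actually close.
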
	

\begin{proof}
	Arguing as in the proof of Proposition \ref{finitevNsmaj}  we can reduce the proof to the case	that $a$ and $b$
	have finite spectra. We then put them in the form \eqref{formab} from Proposition \ref{theform} assuming further that
	$P_i=Q_i$ for all $i$ (conjugating $b$ by a unitary if necessary):
	\begin{align*}
		a=\sum_{i=1}^n\alpha_iP_i,\quad
		b=\sum_{i=1}^n\beta_iP_i.
	\end{align*} 
Again arguments as in the proof of Proposition \ref{finitevNsmaj}  allow us to  assume that the coefficients $(\alpha_i)_{i=1}^n$ and $(\beta_i)_{i=1}^n$ have finite spectrum.

	Notice that if  $e$ is a central projection then the hypothesis of the theorem hold for 
	$ea$ and $eb$ in $eM$ (by Lemma \ref{centrallambda}).  
	On the other hand, if central projections $(e_j)_{j=1}^N$ partition the unit and we have proven the theorem for $e_ja$ and $e_jb$ in $e_jM$ for all $j$ then we conclude the same for $a$ and $b$. This allows us to make the following reductions:
	\begin{enumerate}[(1)]
		\item
		each $P_i$ is either finite or properly infinite for all $i$,
		\item
		the projections $P_i$ are pairwise orthogonal, pairwise Murray-von Neumann comparable, and add up to 1, 
		\item
		$c_{P_i}=1$ for all $i$.
	\end{enumerate}

	Recall that we have assumed that the central coefficients $(\alpha_i)_{i=1}^n$
	and $(\beta_i)_{i=1}^n$ have finite spectra. By passing to cut-downs of $M$ by central projections we can assume that
	these coefficients are scalars. Observe that the decreasing ordering of $(\alpha_i)_{i=1}^n$
	and $(\beta_i)_{i=1}^n$ is maintained by doing this and that properties (1)--(3) above are not destroyed in the process.  Thus,  we further assume that
	\begin{enumerate}
		\item[(4)]
		the coefficients $(\alpha_i)_{i=1}^n$
		and $(\beta_i)_{i=1}^n$ are decreasing scalars.
	\end{enumerate}

	We proceed by induction on the number of projections. If $n=1$ then Lemma \ref{tracialineq2} (b) implies that $\alpha_1\leq \beta_1$. Hence $a\leq b$.
	
	Let us consider the general case now.  
	The case when  all the projections $P_i$ are finite has already been dealt with in Proposition \ref{finitevNsmaj}.
	So let us  assume that  one of the projections is properly infinite. Let $P_k$ be a  projection larger than the rest in the Murray-von Neumann sense. By assumption, $P_k$ is properly infinite.

	\emph{Case $k<n$}.  
	From $a\smajtau b$ we know, by Lemma \ref{tracialineq1},  that
	$\sum_{i=1}^{k}\alpha_iP_i\smajtau\sum_{i=1}^{k}\beta_iP_i$. This relation also holds in the hereditary subalgebra $(P_1+\cdots+P_k)M(P_1+\cdots+P_k)$ (by Lemma \ref{hereditary}). 
By induction, 
	$\sum_{i=1}^k\alpha_iP_i\smaj\sum_{i=1}^k\beta_iP_i$. Hence
	\[
	\sum_{i=1}^k\alpha_iP_i+\sum_{i>k}\beta_iP_i\smaj b.
	\]
	But $\sum_{i=k}^n \alpha_i P_i\smaj \alpha_kP_k+\sum_{i>k}\beta_iP_i$,
	by Lemma \ref{pismaj}.  Hence,
	\[
	a= \sum_{i=1}^{n}\alpha_iP_i\smaj \sum_{i=1}^{k-1}\alpha_iP_i+\alpha_kP_k+\sum_{i>k}\beta_iP_i\smaj b.
	\]

	\emph{Case $k=n$.}
	Suppose that $P_n$ is the largest projection in the Murray-von Neumann sense (and it is properly infinite).
	If $\alpha_n>\beta_n$ then  from condition (c) of Lemma \ref{tracialineq2}    we get that $P_n\propto\bigoplus_{i<n}P_i$.  But we have assumed that the projections $P_i$
	are pairwise Murray-von Neumann comparable. So   $P_n\propto P_{k'}$ for some $k'<n$. The projection $P_{k'}$ is also properly infinite (since it cannot be finite). Hence,  $P_n\precsim P_{k'}$. 
	We are then in a case previously dealt with, since $P_{k'}$ is  properly infinite and  larger than the other projections. So let us assume that $\alpha_n\leq \beta_n$. 
	
	By Lemma \ref{tracialineq1} we have  $\sum_{i=1}^{n-1}\alpha_iP_i\smaj \sum_{i=1}^{n-1}\beta_iP_i$, which also holds in the hereditary subalgebra $(P_1+\cdots+P_{n-1})M(P_1+\cdots+P_{n-1})$ (by Lemma \ref{hereditary}). 
	Hence, by induction, 
	\[
	\sum_{i=1}^{n-1}\alpha_iP_i\smaj \sum_{i=1}^{n-1}\beta_iP_i.
	\]
	Since   $\alpha_n\leq \beta_n$ we get that $a\smaj b$, as desired.
\end{proof}

\begin{lemma}\label{Psmaj}
	Let $P$ be a properly infinite projection such that $P\sim 1$. Let $a,b\in (1-P)M(1-P)$ be positive contractions. 
	\begin{enumerate}[(i)]
		\item
		If $(1-P)-a\smaj (1-P)-b$ 
		then $\beta P+a\maj \beta P+b$ for any scalar 
		$\beta$ such that $a,b\leq \beta\leq 1$. 	
		\item
		If $a\smaj b$ then $a+\alpha P\maj b+\alpha P$
		for any scalar $\alpha\geq 0$ such that $a,b\geq \alpha(1-P)$. 
	\end{enumerate}	
	
\end{lemma}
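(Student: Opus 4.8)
The plan is to deduce (i) from (ii) by complementation, reduce (ii) to the case $\alpha=0$, and then prove that case by realizing the inequality $a\le b$ as a limit of convex combinations of unitary conjugates of $b$ inside $M$ — an internal version of the dilation used for Proposition \ref{smajstabilization}, made possible by $P$ being properly infinite with $P\sim1$.

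\emph{Reductions.} The map $z\mapsto 1-z$ is an affine homeomorphism of $M_{\sa}$ carrying the unitary orbit of $y$ onto that of $1-y$, so $x\maj y\iff 1-x\maj 1-y$ in $M$. If $1-P=0$ the lemma is trivial; assume $1-P\ne 0$. For (i), apply this with $x=\beta P+a$, $y=\beta P+b$: writing $\alpha=1-\beta$, $a'=(1-P)-a$, $b'=(1-P)-b$, one gets $1-x=\alpha P+a'$, $1-y=\alpha P+b'$, the hypothesis $(1-P)-a\smaj(1-P)-b$ becomes $a'\smaj b'$, and $a,b\le\beta$ forces $a',b'\ge\alpha(1-P)$; thus (i) is the instance of (ii) for $a',b',\alpha$. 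For (ii) (so $\alpha\le1$), put $\hat a=a-\alpha(1-P)\ge0$, $\hat b=b-\alpha(1-P)\ge0$, both in $(1-P)M(1-P)$. Then $a+\alpha P=\hat a+\alpha\cdot 1$ and $b+\alpha P=\hat b+\alpha\cdot 1$; since adding the scalar $\alpha\cdot 1$ commutes with unitary conjugation, convex combinations and closures, it suffices to prove $\hat a\maj\hat b$ in $M$. Writing $a$ as a limit of convex combinations $\frac1N\sum_i d_ibd_i^*$ with $\|d_i\|\le1$, $d_i\in(1-P)M(1-P)$, and using $\frac1N\sum_i d_ibd_i^*-\alpha(1-P)\le\frac1N\sum_i d_i\hat bd_i^*$ together with Lemma \ref{basicsmaj}(i) and the closedness of $\{y:y\smaj\hat b\}$, one obtains $\hat a\smaj\hat b$ in $(1-P)M(1-P)$. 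So everything reduces to: if $a,b\in\big((1-P)M(1-P)\big)_+$ and $a\smaj b$ in $(1-P)M(1-P)$, then $a\maj b$ in $M$.

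\emph{Proof of this claim.} \textbf{(A)} For $x\in M$, if $1-\mathrm{supp}(x^*x)$ and $1-\mathrm{supp}(xx^*)$ are Murray--von Neumann equivalent then $xx^*=u(x^*x)u^*$ for some $u\in\U(M)$: with the polar decomposition $x=v|x|$ one has $v^*v=\mathrm{supp}(x^*x)=:s$, $vv^*=\mathrm{supp}(xx^*)=:s'$, $v(x^*x)v^*=xx^*$; choosing a partial isometry $w$ with $w^*w=1-s$, $ww^*=1-s'$, the element $u=v+w$ is unitary and $u(x^*x)u^*=xx^*$. This applies whenever each of $1-\mathrm{supp}(x^*x)$, $1-\mathrm{supp}(xx^*)$ dominates a projection equivalent to $1$ (then both are $\sim1$ by Cantor--Bernstein). \textbf{(B)} Suppose first $a\le b$. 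Set $k=a^{1/2}$, $h=(b-a)^{1/2}\in(1-P)M(1-P)$, so $k^2+h^2=b$. Fix $N$. As $P$ is properly infinite, write $P=P_0+P_1+\dots+P_N$ with the $P_i$ pairwise orthogonal and $P_i\sim P\sim1$; pick $R_j\le P_j$ with $R_j\sim1-P$ (possible since $1-P\precsim1\sim P_j$) and partial isometries $W_j$ with $W_j^*W_j=1-P$, $W_jW_j^*=R_j$; put $x_j=k+W_jh\in M$. Because $R_j\le P$ gives $(1-P)W_j=0$, the cross terms in $x_j^*x_j$ vanish and $x_j^*x_j=k^2+h^2=b$, whereas
\[
x_jx_j^*=a+W_j(b-a)W_j^*+khW_j^*+W_jhk.
\]
Moreover $\mathrm{supp}(x_jx_j^*)\le(1-P)+R_j$, so $1-\mathrm{supp}(x_jx_j^*)\ge P-R_j\ge P_0\sim1$, while $\mathrm{supp}(x_j^*x_j)=\mathrm{supp}(b)\le1-P$; by (A) there is $v_j\in\U(M)$ with $v_jbv_j^*=x_jx_j^*$. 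Averaging,
\[
\frac1N\sum_{j=1}^N v_jbv_j^*=a+\frac1N\sum_{j=1}^N W_j(b-a)W_j^*+\frac1N\sum_{j=1}^N\bigl(khW_j^*+W_jhk\bigr).
\]
The middle sum has norm $\|b-a\|/N$ since the $R_j$ are orthogonal, and $W_j^*W_i=\delta_{ij}(1-P)$ gives $\bigl(\tfrac1N\sum_j W_j^*\bigr)\bigl(\tfrac1N\sum_j W_j^*\bigr)^*=\tfrac1N(1-P)$, hence $\bigl\|\tfrac1N\sum_j W_j^*\bigr\|=N^{-1/2}$ and the last sum has norm at most $2\|kh\|\,N^{-1/2}$. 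Letting $N\to\infty$ shows $a\in\overline{\co\{ubu^*:u\in\U(M)\}}$, i.e.\ $a\maj b$. For the general claim, each $d_ibd_i^*=x_ix_i^*$ with $x_i=d_ib^{1/2}\in(1-P)M(1-P)$ and $x_i^*x_i=b^{1/2}d_i^*d_ib^{1/2}\le b$; by (A) $x_ix_i^*$ is a unitary conjugate of $x_i^*x_i$ in $M$, and by (B) $x_i^*x_i\maj b$, so $d_ibd_i^*\maj b$; since $\{y:y\maj b\}$ is closed and convex, $a\maj b$.

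The reductions and (A) are routine; the crux is (B), the internal dilation of $a\le b$. The one point demanding care is that the off-diagonal error terms $khW_j^*$ tend to $0$ in norm rather than merely weakly — which is precisely what the relations $W_j^*W_i=\delta_{ij}(1-P)$ and the ensuing $N^{-1/2}$ bound provide.
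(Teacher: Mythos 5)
Your proof is correct, and it takes a genuinely different (and in one respect more self-contained) route than the paper's.

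The paper proves (i) first and then derives (ii) from it by complementation, whereas you reverse the order of that symmetric reduction; this by itself is cosmetic. The substantive difference is in how the core implication is established. The paper transforms the hypothesis into $\beta(1-P)-a\smaj\beta(1-P)-b$ in $(1-P)M(1-P)$, then invokes Proposition~\ref{smajstabilization} to upgrade this to majorization in $\big((1-P)M(1-P)\otimes\mathcal K\big)^\sim$, then uses the proper infiniteness of $P$ and $1-P\precsim P$ to embed $(1-P)M(1-P)\otimes\mathcal K$ hereditarily into $M$ and transfer the unitaries of the unitization to unitaries of $M$. You instead isolate the crux claim — $a\smaj b$ in $(1-P)M(1-P)$ implies $a\maj b$ in $M$ for positive $a,b$ — and prove it internally in $M$, replacing the external matrix algebra $M_n\big((1-P)M(1-P)\big)$ by explicit "matrix units" $W_j$ inside $PMP$ afforded by $P$ being properly infinite with $P\sim 1$. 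This is essentially the dilation computation from the proof of Proposition~\ref{smajstabilization}, carried out in place, with the ambient unit $1_M$ already available, which removes the (admittedly short but unwritten) step in the paper of transferring majorization from the unitization of the embedded stabilization back to $M$. The quantitative estimate $\big\|\tfrac1N\sum_j W_j^*\big\|=N^{-1/2}$, which controls the off-diagonal error, plays the same role as "changing $n$ and averaging" in the paper's Proposition~\ref{smajstabilization}. Your reduction of (ii) to the $\alpha=0$ case by subtracting the scalar $\alpha\cdot 1$ is also a mild streamlining relative to the paper's use of Lemma~\ref{basicsmaj}(iii) at the analogous point. In short: same underlying dilation idea, but your argument is self-contained within $M$ and avoids routing through the stabilization and its unitization, at the cost of spelling out the partial-isometry bookkeeping that the paper delegates to Proposition~\ref{smajstabilization}.
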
	
\begin{proof}
	(i) By Lemma \ref{basicsmaj},    
	\[
	((1-P)-a-t)_+\smaj ((1-P)-t-b)_+\] 
	for any $t\in [0,\infty)$. Choosing $t=1-\beta$ we obtain
	that $\beta(1-P)-a\smaj \beta(1-P)-b$ in $(1-P)M(1-P)$.   Since $1-P\precsim P$ and $P$ is properly infinite, we can
	find countably many orthogonal copies of $1-P$ in $PMP$. So $(1-P)M(1-P)\otimes \mathcal K$ embeds in $M$ mapping $(1-P)M(1-P)$ to itself. By Proposition \ref{smajstabilization},
	submajorization in a C*-algebra  is equivalent to majorization in the unitization of the stabilization
	of that C*-algebra. Hence 
	$\beta(1-P)-a\maj \beta(1-P)-b$ in $M$. So, 
	\[
	\beta-(\beta(1-P)-a)=\beta P+a\] 
	is majorized by $\beta P+b$ in $M$, as desired.
	
	(ii) By (i) applied to $a'=(1-P)-a$ and $b'=(1-P)-b$ with $\beta=1-\alpha$
	we get that 
	\[
	(1-\alpha)P+(1-P)-a\maj (1-\alpha)P+(1-P)-b.
	\] 
	Hence
	\[
	1-((1-\alpha)P+(1-P)-a)=a+\alpha P
	\] is majorized by $b+\alpha P$, as desired.
\end{proof}

\begin{proposition}\label{vNmajorization}
	Let $r\in [0,\infty)$.
	Let  $a,b\in M_+$ be contractions such that 
	$(a-r)_+\smajtau b$ and $(1-a-r)_+\smajtau 1-b$. Then $a$ 
	is within a distance $r$ of $\co\{ubu^*\mid u\in \U(M)\}$.
\end{proposition}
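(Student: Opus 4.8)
The plan is to mimic the passage from Proposition~\ref{finitevNsmaj} to Proposition~\ref{vNsubmajorization}, now carrying along two tracial submajorization conditions---one for the pair $(a,b)$ and one for the pair $(1-a,1-b)$---and invoking Proposition~\ref{finitevNmaj} on the finite part of $M$.

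First I would perform the standard reductions used in the proofs of Propositions~\ref{finitevNmaj} and~\ref{vNsubmajorization}. Approximating $a$ and $b$ within $\epsilon$ by positive contractions of finite spectrum and using Lemma~\ref{basicsmaj} to transfer the hypotheses with $r$ replaced by $r+\epsilon$ (letting $\epsilon\to 0$ at the end), I may assume that $a$ and $b$ have finite spectrum. By Proposition~\ref{theform} I write $a=\sum_{i=1}^n\alpha_iP_i$ and $b=\sum_{i=1}^n\beta_iQ_i$ with $P_i\sim Q_i$; conjugating $b$ by a unitary I may take $P_i=Q_i$, cutting $M$ down by central projections I reduce to $c_{P_i}=1$ for all $i$, and since the central coefficients have finite spectrum I reduce further to the case that $(\alpha_i)_{i=1}^n$ and $(\beta_i)_{i=1}^n$ are decreasing nonnegative scalars. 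Splitting $M$ once more along central projections, I may also assume, exactly as in the proof of Proposition~\ref{vNsubmajorization}, that each $P_i$ is either finite or properly infinite and that the $P_i$ are pairwise Murray--von Neumann comparable, so that some $P_k$ dominates all the others.

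Now split $M$ into its finite and properly infinite central summands. On the finite summand the assertion is exactly Proposition~\ref{finitevNmaj}. On the properly infinite summand the dominating projection $P_k$ is properly infinite, and I would argue by induction on $n$, following the pattern of the proofs of Propositions~\ref{vNsubmajorization} and~\ref{finitealgmaj}. For $n=1$ one has $a=\alpha_1 1$ and $b=\beta_1 1$, and the hypotheses $(\alpha_1-r)_+\leq\beta_1$ and $(1-\alpha_1-r)_+\leq 1-\beta_1$ give $\|\alpha_1-\beta_1\|\leq r$, so $b$ itself is within $r$ of $a$. For the inductive step, Lemmas~\ref{tracialineq1} and~\ref{tracialineq2} translate the two hypotheses into submajorization and comparison statements for the head truncations of $a,b$ and the tail truncations of $1-a,1-b$; one treats $P_k$ by passing, via Lemma~\ref{hereditary}, to the hereditary subalgebra generated by $P_k$ together with the smaller projections, applying the induction hypothesis there, and then absorbing the remaining coordinates against $\alpha_kP_k$ by means of Lemmas~\ref{pismaj} and~\ref{Psmaj}---Lemma~\ref{Psmaj} being precisely what lets one keep both the ``$a$-side'' and the ``$1-a$-side'' under control when $P_k\sim 1$. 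The case in which the dominating projection carries the smallest coefficient is handled, as in Proposition~\ref{vNsubmajorization}, by either dropping that coordinate (when $\alpha_n\leq\beta_n$, using Lemma~\ref{orthosum}) or, when $\alpha_n>\beta_n$, invoking Lemma~\ref{tracialineq2}(c) to produce a properly infinite $P_{k'}$ with $P_n\precsim P_{k'}$ and thereby reducing to the previous case.

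The step I expect to be the main obstacle is propagating \emph{both} tracial conditions simultaneously through the entire induction: in Proposition~\ref{vNsubmajorization} there is only one condition to track, whereas here the condition on $1-a,1-b$ must be carried through every hereditary cut-down and every absorption, and it is exactly this condition that interacts with the properly infinite projections. Lemma~\ref{Psmaj} is designed for this, but one still has to verify, at each stage, that the two families of inequalities---the tracial analogues of conditions (a) and (a') of Proposition~\ref{finitealgmaj}---survive restriction to the hereditary subalgebras in play, that enough proper infiniteness is available to invoke Lemmas~\ref{pismaj} and~\ref{Psmaj}, and that the element produced is a genuine convex combination of unitary conjugates of $b$ rather than merely a norm limit of such.
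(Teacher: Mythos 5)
Your outline matches the paper up through the standard reductions (finite spectrum via Proposition~\ref{theform}, unitary conjugation so $P_i=Q_i$, central cut-downs to reach $c_{P_i}=1$ and scalar decreasing coefficients, each $P_i$ finite or properly infinite and pairwise comparable, and dispatching the finite central summand via Proposition~\ref{finitevNmaj}). After that point, however, the structure you propose diverges from the paper's argument, and I think the divergence is a real gap rather than a stylistic choice.

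You propose an induction on $n$ for the properly infinite part, in which you cut away the dominating properly infinite $P_k$, invoke the inductive hypothesis on a hereditary corner, and then ``absorb'' the remaining coordinates using Lemmas~\ref{pismaj} and~\ref{Psmaj}. But the paper does \emph{not} induct on $n$ here at all: it uses the already established Proposition~\ref{vNsubmajorization} as a black box and then makes a direct construction. The reason this matters is precisely the difficulty you flag at the end of your proposal --- in the majorization problem both $(a-r)_+\smajtau b$ and $(1-a-r)_+\smajtau 1-b$ must be carried simultaneously, and when you cut away a properly infinite block the unit changes, so the ``$1-a$'' side is disrupted. Lemma~\ref{pismaj} is a one-sided absorption (submajorization only), and Lemma~\ref{Psmaj} requires the properly infinite piece you are keeping to be equivalent to the unit of the ambient algebra. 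A naive ``cut away $P_k$, induct, absorb'' scheme does not reconcile these two constraints, and you never say how you would. This is exactly where the paper does something you don't mention: when $|\alpha_k-\beta_k|\le r$ it \emph{splits} the dominating properly infinite projection $P_k=P_k'+P_k''$ with $P_k\sim P_k'\sim P_k''$, so that the left block $P_1+\cdots+P_{k-1}+P_k'$ and the right block $P_k''+P_{k+1}+\cdots+P_n$ each contain a copy of $P_k$ that is equivalent to the ambient unit of its corner; one then applies Proposition~\ref{vNsubmajorization} (not an inductive hypothesis for this proposition) to the relevant truncations and finishes each block with Lemma~\ref{Psmaj}, parts (ii) and (i) respectively. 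Without this splitting trick, the step you describe as ``absorbing the remaining coordinates against $\alpha_kP_k$'' does not obviously go through.

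Your treatment of the remaining case (which in the paper is $\beta_k>\alpha_k+r$) is also underspecified. You say you would invoke Lemma~\ref{tracialineq2}(c) to find a properly infinite $P_{k'}$ and ``reduce to the previous case,'' but the paper does more: it takes the \emph{largest} such $k'>k$, observes using conditions (b') and (c') that $\beta_{k'}>\alpha_{k'}+r$ is impossible, and then if $\alpha_{k'}>\beta_{k'}+r$ it writes down an explicit intermediate element $a'$ (truncating coefficients by $\pm r$ below $k$ and above $k'$ and leaving them alone in between) and checks directly, via a chain of applications of Lemmas~\ref{pismaj} and~\ref{Psmaj} on the three central blocks $P_1+\cdots+P_{k-1}+P_{k'}$, $P_k+\cdots+P_{k'}$, and $P_k+P_{k'+1}+\cdots+P_n$, that $b$ majorizes $a'$. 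That construction is not a reduction to the first case. Finally, your worry about the element being ``a genuine convex combination rather than a norm limit'' is a non-issue: the proposition only asserts that $a$ is within distance $r$ of the convex hull, and the proof only needs this up to an extra $\epsilon$ that is sent to $0$, which is consistent with producing elements of the closed convex hull at each stage.

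In short, you correctly identify the relevant reductions and the key lemmas, but the core combinatorial device that makes the two-sided hypothesis manageable in the presence of a properly infinite dominating projection --- the equal split $P_k=P_k'+P_k''$ --- is missing from your plan, and the proposed induction on $n$ is not what the paper does and is not filled in enough to be checked.
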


\begin{proof}
	Let $a,b\in M_+$ be as in the statement of the theorem. Let $\epsilon>0$.
	Let us find $a',b'\in M_+$, contractions with finite spectrum, such that 
	$\|a-a'\|<\epsilon/2$ and $\|b-b'\|<\epsilon/2$. Express them in the form
	\[
	a'=\sum_{i=1}^n \alpha_iP_i,\quad b'=\sum_{i=1}^n\beta_i Q_i
	\]
	as in \eqref{formab} of Proposition \ref{theform}. From 
	$(a-r)_+\smajtau b$ and $(1-a-r)_+\smajtau 1-b$ we deduce that  
	$(a'-r-\epsilon)_+\smajtau b'$ and 
	$(1-a'-r-\epsilon)_+\smajtau 1-b'$. Having proven the theorem for $a'$ and $b'$
	it is clear that, by letting $\epsilon\to 0$, we deduce the theorem for $a$ and $b$.
	So let us instead assume that $a$ and $b$, as in the statement of the theorem,  have finite spectra. Conjugating $b$ by a unitary, we may also assume that $P_i=Q_i$ for all $i$. We assume further that the central coefficients $(\alpha_i)_{i=1}^n$ and $(\beta_i)_{i=1}^n$ have finite spectra, which can be attained by a small enough approximation when moving from $a$, $b$ to $a'$, $b'$ respectively.
	
	Notice that if  $e$ is a central projection then the hypothesis of the theorem hold for 
	$ea$ and $eb$ in $eM$ (by Lemma \ref{centrallambda}).  
	On the other hand, if central projections $(e_j)_{j=1}^N$ partition the unit and we have proven the theorem for $e_ja$ and $e_jb$ in $e_jM$ for all $j$ then we conclude the same for $a$ and $b$. This allows us to make the following reductions:
	\begin{enumerate}[(1)]
		\item
		each projection $P_i$ is either finite or properly infinite for all $i$,
		\item
			the projections $P_i$ are pairwise orthogonal, pairwise Murray-von Neumann comparable, and add up to 1, 
		\item
		$c_{P_i}=1$ for all $i$.
	\end{enumerate}
	In the case that all the projections 
	$P_1,\ldots,P_n,$ are finite, the unit $1$  is 
	finite  and so the desired conclusion follows from Proposition \ref{finitevNmaj}. 
	Thus, we can make the following additional assumption:
	\begin{enumerate}
		\item[(4)]
		at least one of the projections $P_i$ is properly infinite.
	\end{enumerate}	  
	
	Recall that we have assumed that the central coefficients $(\alpha_i)_{i=1}^n$
	and $(\beta_i)_{i=1}^n$ have finite spectra. Passing to cut-downs of $M$ by central projections that partition the unit we can assume that
	these coefficients are scalars. Observe that the decreasing ordering of the coefficients $(\alpha_i)_{i=1}^n$
	and $(\beta_i)_{i=1}^n$ is maintained by doing this and that properties (1)--(4) above are not destroyed in the process.  Thus,  we further assume that
	\begin{enumerate}
		\item[(5)]
		the coefficients $(\alpha_i)_{i=1}^n$
		and $(\beta_i)_{i=1}^n$ are decreasing scalars.
	\end{enumerate}

	By Lemma \ref{tracialineq2},  $(a-r)_+\smajtau b$ implies the following conditions:
	\begin{enumerate}
		\item[(a)]
		$\tau(\sum_{i=1}^k(\alpha_i-r)_+P_i )\leq \tau(\sum_{i=1}^k\beta_iP_i)$
		for all $\tau\in \T(M)$ and $k=1,\dots,n$,
		\item[(b)]
		$\beta_1\geq \alpha_1-r$,
		\item[(c)]
		If for some $k\geq 2$ we have that 
		$\alpha_k -r> \beta_k$
		then $P_k\propto P_{k'}$ for some $k'<k$. That is, $P_k$ is Murray von Neumann smaller
		than finitely many copies of some $P_{k'}$ with $k'<k$. (Indeed, by Lemma \ref{tracialineq2}, $P_k\propto\bigoplus_{i<k}P_i$. But we have assumed that the projections $P_i$
		are pairwise Murray-von Neumann comparable. So $\bigoplus_{i<k}P_i\propto P_{k'}$ for some $k'<k$.)  
	\end{enumerate}
	Let us call the conditions stated above left-to-right conditions. One derives similar conditions
	from  $(1-a-r)_+\smajtau 1-b$. They take the form
	\begin{enumerate}
		\item[(a')]
		$\tau(\sum_{i=k}^n(1-\alpha_i-r)_+P_i)\leq \tau(\sum_{i=k}^n(1-\beta_i)P_i)$
		for all $\tau\in \T(M)$ and $k=1,\dots,n$,
		\item[(b')]
		$\beta_n\leq \alpha_n+r$,
		\item[(c')]
		If for some $k\leq n-1$  we have that $\alpha_k+r< \beta_k$
		then $P_k$ is Murray von Neumann smaller than finitely many copies of some $P_{k'}$  with $k'>k$.
	\end{enumerate}
	We'll call the above right-to-left conditions.

	Let $k=1,\ldots,n$ be the least index such that $P_k$ is  larger (in the Murray-von Neumann sense) than the other projections. By assumption $P_k$
	is also properly infinite. Notice that by conditions (b) and (c) we cannot have that
	$\alpha_k>\beta_k+r$. So either $|\alpha_k-\beta_k|\leq r$ or 
	$\beta_k>\alpha_k+r$. We consider these two cases next:

	\emph{Case $|\alpha_k-\beta_k|\leq r$}. Let us write $P_k=P_k'+P_k''$, where
	$P_k\sim P_k'\sim P_{k}''$. Consider the pair of elements 
	\[
	a'=\sum_{i=1}^{k-1}\max(\alpha_i-r,\beta_k)P_i+\beta_kP_k',\quad b'=\sum_{i=1}^{k-1} \beta_iP_i+\beta_kP_k'
	\]
	and the pair
	\[
	a''=\beta_kP_k''+\sum_{i=k+1}^n\min(\alpha_i+r,\beta_k)P_i, \quad b''=\beta_kP_k''+\sum_{i=k+1}^n\beta_iP_i.
	\]
	We claim that $a'\maj b'$ in $PMP$, where $P=P_1+\cdots+P_{k-1}+P_k'$,
	and that $a''\maj b''$ in $(1-P)M(1-P)$. Since $b=b'+b''$ and $\|a-(a'+a'')\|\leq r$, the desired result will follow from this claim.
	
	Let us prove that   $a'\maj b'$ in $PMP$. If $k=1$ this holds trivially, so assume that $k>1$. Let  $1\leq i_0\leq k-1$ be the largest index such that $\alpha_i-r\geq \beta_k$  and if there is no such index set $i_0=0$. From $(a-r)_+\smajtau b$ and Lemma \ref{tracialineq1} we get that
	\[
	\sum_{i=1}^{i_0} (\alpha_i-r)P_i\smajtau \sum_{i=1}^{i_0}\beta_iP_i.
	\]
	Furthermore, by Proposition \ref{vNsubmajorization}, the above relation is in fact a submajorization. On the other hand,
	\[
	\sum_{i=i_0+1}^{k-1}\beta_kP_i\leq \sum_{i=i_0+1}^{k-1}\beta_iP_i.
	\]
	Hence,   
	\[
	\sum_{i=1}^{k-1}\max(\alpha_i-r,\beta_k)P_i\smaj\sum_{i=1}^{k-1} \beta_iP_i.
	\]   
	That $a'\maj b'$ now follows from 
	Lemma \ref{Psmaj} (ii).
	
	The proof that $a''\maj b''$  in $(1-P)M(1-P)$ is entirely analogous (recall that we have written $P=P_1+\cdots+P_{k-1}+P_k'$):
	By Lemma \ref{Psmaj} (i), it suffices to check that  
	\[
	(1-P-P_k'')-\sum_{i=k+1}^n\min(\alpha_i+r,\beta_k)P_i
	\] is submajorized by  
	\[
	(1-P-P_k'')-\sum_{i=k+1}^n\beta_iP_i.
	\] 
	To check this, let $i_0\geq k+1$ be the largest index such that 
	$\alpha_i+r\geq \beta_k$. Then 
	\[
	\sum_{i=k+1}^{i_0} (1-\beta_k)P_i\leq \sum_{i=k+1}^{i_0} (1-\beta_i)P_i.
	\]
	On the other hand,  from $(1-a-r)_+\smajtau 1-b$ and Lemma \ref{tracialineq1} we get that
	\[
	\sum_{i=i_0+1}^n(1-\alpha_i-r)P_i\smajtau \sum_{i=i_0+1}^n(1-\beta_i)P_i.
	\]
	Moreover, by Proposition \ref{vNsubmajorization}, this relation is of submajorization. 
	Hence 
	\[
	\sum_{i=k+1}^n(1-\min(\alpha_i+r,\beta_k))P_i\smaj\sum_{i=k+1}^n(1-\beta_i)P_i,
	\] 
	as desired.

	\emph{Case $\beta_k> \alpha_k+r$}. 
	By condition (c'), there must exist an index $k'>k$ such that $P_{k'}$ is also properly infinite
	and larger than every other projection. Let $k'$ be the largest such index. 
	Notice that we cannot have that $\beta_{k'}>\alpha_{k'}+r$ by conditions (b') and (c') from the right-to-left conditions.
	So we must have that either $|\alpha_{k'}-\beta_{k'}|\leq r$
	or that $\alpha_{k'}> \beta_{k'}+r$. The first of these two cases has already been dealt with. So let us assume that  $\alpha_{k'}>\beta_{k'}+r$.
	
	We claim that $b$ majorizes
	\[
	a'=\sum_{i=1}^{k-1} (\alpha_i-r)_+P_i+\sum_{i=k}^{k'}\alpha_iP_i+\sum_{i=k'+1}^n(\alpha_i+r)P_i.
	\]
	Since $a'$ is within a distance $r$ of $a$, this is sufficient to complete  the proof of this case.
Let us prove our claim.	 Notice first that, as argued in the previous paragraphs, from Lemma \ref{Psmaj} (i) we obtain the majorization
	\begin{equation}\label{maj3}
		\beta_kP_k+\sum_{i=k'+1}^n (\alpha_i+r)P_i\maj \beta_kP_k+\sum_{i=k'+1}^n \beta_iP_i.
	\end{equation}
	in $(P_k+P_{k'+1}+\cdots+P_n)M(P_k+P_{k'+1}+\cdots+P_n)$. Similarly, from Lemma \ref{Psmaj} (ii) we obtain that
	\begin{equation}\label{maj4}
		\sum_{i=1}^{k-1}(\alpha_i-r)P_i+\beta_{k'}P_{k'}\maj \sum_{i=1}^{k-1}\beta_iP_i+\beta_{k'}P_{k'}.
	\end{equation}
	in $(P_1+\cdots+P_{k-1}+P_{k'})M(P_1+\cdots+P_{k-1}+P_{k'})$.
	We will be done once we have shown that
	\[
	\sum_{i=k}^{k'}\alpha_iP_i\maj \sum_{i=k}^{k'}\beta_iP_i,
	\]
	in $(P_k+\cdots+P_{k'})M(P_k+\cdots+P_{k'})$. Let us show this. We have 
	\[
	\sum_{i=k}^{k'-1}\alpha_iP_i\smaj \sum_{i=k}^{k'-1}\beta_iP_i,\]
	by Lemma \ref{pismaj}.
	So 
	\[
	\sum_{i=k}^{k'-1}\alpha_iP_i+\beta_{k'}P_{k'}\maj \sum_{i=k}^{k'}\beta_iP_i,
	\] by Lemma \ref{Psmaj} (ii).
	Repeating the same argument, symmetrically, 
	\[
	\sum_{i=k+1}^{k'}(1-\alpha_i)P_i\smaj \sum_{i=k+1}^{k'-1}(1-\alpha_i)P_i+(1-\beta_{k'})P_{k'},
	\] 
	by Lemma \ref{pismaj}. So 
	\[
	\sum_{i=k}^{k'}\alpha_iP_i\maj \sum_{i=k}^{k'-1}\alpha_iP_i+\beta_{k'}P_{k'},
	\]
	by Lemma \ref{Psmaj} (i).
\end{proof}

\section{Majorization and submajorization in C*-algebras}\label{proofofmain}

\begin{proposition}\label{hahn-banacharg}
	Let $A$ be a C*-algebra.  Let $a,b\in A$.
	\begin{enumerate}[(i)]
		\item
		The distance from  $a$ to $\co\{dbd^*\mid d\in A^{**},\, \|d\|\leq 1\}$ is equal to the distance from $a$ to $\co\{dbd^*\mid d\in A,\, \|d\|\leq 1\}$.
		\item
		Suppose that $A$ is unital. Then the distance from $a$ to 
		$\co\{ubu^*\mid u\in \U(A^{**})\}$ is equal to the distance from $a$
to 	$\co\{ubu^*\mid u\in \U(A)\}$.
	\end{enumerate}
\end{proposition}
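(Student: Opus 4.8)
The plan is to run the classical Hahn--Banach argument of Day, reducing each equality to a statement about a single bounded linear functional and then descending from $A^{**}$ to $A$ by means of Kaplansky's density theorem. Since the distance from a point to a set coincides with the distance to its norm closure, throughout I would replace the convex hulls by their closures. The inequality ``$\leq$'' in both (i) and (ii) is immediate: $A\subseteq A^{**}$ and $\U(A)\subseteq \U(A^{**})$, so the convex hull formed inside $A$ is contained in the one formed inside $A^{**}$, and the distance to the larger set is no greater.

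For the reverse inequality I would invoke the elementary duality formula: for a nonempty closed convex subset $C$ of a Banach space $X$ and a point $x_0\in X$,
\[
\mathrm{dist}(x_0,C)=\sup\Big\{\mathrm{Re}\,\varphi(x_0)-\sup_{c\in C}\mathrm{Re}\,\varphi(c)\ :\ \varphi\in X^*,\ \|\varphi\|\leq 1\Big\},
\]
whose nontrivial direction follows by separating $x_0$ from the open convex set $C+r\cdot(\text{open unit ball})$ whenever $r<\mathrm{dist}(x_0,C)$, and whose other direction is just $\mathrm{Re}\,\varphi(x_0)-\mathrm{Re}\,\varphi(c)\leq\|x_0-c\|$. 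I would apply the nontrivial direction with $X=A^{**}$, $x_0=a$, and $C$ the closed convex hull over $A$ (so $C\subseteq A$). Given $\varphi\in(A^{**})^*$ with $\|\varphi\|\leq 1$, put $\varphi_0:=\varphi|_A\in A^*$; since $A^*$ is the predual of $A^{**}$, there is a \emph{normal} functional $\tilde\varphi$ on $A^{**}$ extending $\varphi_0$ with $\|\tilde\varphi\|=\|\varphi_0\|\leq 1$, and because $a\in A$ and $C\subseteq A$ we have $\mathrm{Re}\,\varphi(a)=\mathrm{Re}\,\tilde\varphi(a)$ and $\sup_{c\in C}\mathrm{Re}\,\varphi(c)=\sup_{c\in C}\mathrm{Re}\,\tilde\varphi(c)$.

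The crux is then the claim that, for every normal functional $\psi$ on $A^{**}$,
\[
\sup\{\mathrm{Re}\,\psi(dbd^*)\ :\ d\in A^{**},\ \|d\|\leq 1\}=\sup\{\mathrm{Re}\,\psi(dbd^*)\ :\ d\in A,\ \|d\|\leq 1\},
\]
together with the analogous identity, when $A$ is unital, for $d$ ranging over $\U(A^{**})$, resp.\ $\U(A)$. Granting this, and using that the supremum of an affine function over a convex hull equals its supremum over the generating set, one finds that $\mathrm{Re}\,\tilde\varphi(a)-\sup_{c\in C}\mathrm{Re}\,\tilde\varphi(c)$ equals $\mathrm{Re}\,\tilde\varphi(a)-\sup_{c\in D}\mathrm{Re}\,\tilde\varphi(c)$, where $D$ is the closed convex hull over $A^{**}$, and hence is at most $\mathrm{dist}(a,D)$ by the easy direction of the formula in $A^{**}$; taking the supremum over all admissible $\varphi$ gives $\mathrm{dist}(a,C)\leq\mathrm{dist}(a,D)$, as wanted, and (ii) is entirely parallel. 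To prove the claim, ``$\geq$'' is trivial; for ``$\leq$'', given a contraction $d\in A^{**}$ the Kaplansky density theorem supplies a net of contractions $d_\lambda\in A$ with $d_\lambda\to d$ in the strong$^*$ topology, whence $d_\lambda b d_\lambda^*\to dbd^*$ strong$^*$ (multiplication is jointly strong$^*$-continuous on bounded sets), hence $\sigma$-weakly, and normality of $\psi$ gives $\mathrm{Re}\,\psi(dbd^*)=\lim_\lambda\mathrm{Re}\,\psi(d_\lambda b d_\lambda^*)$, bounded above by the right-hand side. For the unitary version one instead needs a net of \emph{unitaries} of $A$ converging strong$^*$ to a given $u\in\U(A^{**})$: writing $u=e^{ih}$ with $h\in(A^{**})_{\sa}$, $\|h\|\leq\pi$ (Borel functional calculus), Kaplansky yields $h_\lambda\in A_{\sa}$ with $\|h_\lambda\|\leq\pi$ and $h_\lambda\to h$ strong$^*$; then $u_\lambda:=e^{ih_\lambda}\in\U(A)$ (here the unitality of $A$ enters) and $u_\lambda\to u$ strong$^*$.

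The step I expect to require the most care is this last one --- checking that $h\mapsto e^{ih}$ carries bounded strong$^*$-convergent nets to strong$^*$-convergent nets, equivalently that $\U(A)$ is strong$^*$-dense in $\U(A^{**})$ --- which one settles by a $3\epsilon$ estimate, using that the exponential series converges in norm uniformly on the ball of radius $\pi$ while each monomial $h\mapsto h^n$ is strong$^*$-continuous on bounded sets. The remaining ingredients (joint strong$^*$-continuity of multiplication on bounded sets, strong$^*$-continuity of normal functionals, Kaplansky density) are standard, as is the Day--Hahn--Banach bookkeeping that ties them together.
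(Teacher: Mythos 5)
Your proof is correct and runs on the same underlying mechanism as the paper's: Kaplansky density to approximate contractions (resp.\ unitaries) of $A^{**}$ by those of $A$ in the strong$^*$ topology, joint strong$^*$-continuity of multiplication on bounded sets, weak$^*$-continuity of functionals in the predual $A^*$, and Hahn--Banach. The organization differs slightly: the paper fixes one finite convex combination $\frac1n\sum d_ibd_i^*$ with $d_i\in A^{**}$ achieving distance $<r+\epsilon$, Kaplansky-approximates each $d_i$ term by term, and then applies Hahn--Banach separation to the resulting net-indexed set against the open ball $B_{r+\epsilon}(0)$; you instead invoke the abstract duality formula for distance, reduce to one normal functional at a time (using that $\varphi\in(A^{**})^*$ and its normal part $\tilde\varphi$ agree on $A\ni a,\,C$), and then verify the single-functional supremum identity via Kaplansky. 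Your version has the minor advantage of isolating the normal-functional reduction as a clean step, and it reproves Kaplansky density for unitaries (which the paper cites as known) via Borel functional calculus and strong$^*$-continuity of the exponential on bounded sets; both are valid. The only point worth stating explicitly, which you do gesture at, is that $C$ is a closed convex subset of $A^{**}$ because $A$ is isometrically embedded and complete, hence norm-closed, in $A^{**}$.
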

\begin{proof}
	(i) It is clear that   the distance from $a$ to 
	$\co\{dbd^*\mid d\in A,\, \|d\|\leq 1\}$ is greater than or equal to the distance from $a$
	to 	$\co\{dbd^*\mid d\in A^{**},\, \|d\|\leq 1\}$. Denote the latter distance by $r$. Let $\epsilon>0$.  Suppose that 
	\[
	\Big\|a-\frac 1 n\sum_{i=1}^n d_ibd_i^*\Big\|<r+\epsilon
	\]	
	for some contractions $d_1,\ldots,d_n\in A^{**}$. For each $i=1,\ldots,n$ let us find a net of contractions $(d_{i,\lambda})_{\lambda}$ in $A$
	such that $d_{i,\lambda}\to d_i$ in the ultrastrong* topology.
	Such a net exists by Kaplansky's density theorem. Then the ultrastrong* closure of the set
	\[
	\Big\{a-\frac 1 n\sum_{i=1}^n d_{i,\lambda}bd_{i,\lambda}^*\mid \lambda\Big\}
	\]
	intersects the ball $B_{r+\epsilon}(0)$. By Hahn-Banach's theorem,  the convex hull of this  set also intersects that ball.  A convex combination of  elements of this set  again has the form $a-a'$ with $a'$ a convex combination of  elements of the form $dbd^*$ with $d\in A$  a contraction.
	
	(ii) It is clear that   the distance from $a$ to 
	$\co\{ubu^*\mid u\in \U(A)\}$ is greater than or equal to the distance from $a$
	to 	$\co\{ubu^*\mid u\in \U(A^{**})\}$. Denote the latter distance by $r$. Let $\epsilon>0$.  Suppose that
	\[
	\Big\|a-\frac 1 n\sum_{i=1}^n u_ibu_i^*\Big\|<r+\epsilon
	\]	
	for some unitaries 
	 $u_i\in A^{**}$.  By Kaplansky's density theorem for unitaries, there exist  nets of unitaries $(u_{i,\lambda})_{\lambda} $ in $A$ converging to $u_i$ in the  ultrastrong* topology. Then the ultrastrong* closure of the set
	 \[
	 \Big\{a-\frac 1 n\sum_{i=1}^n u_{i,\lambda}bu_{i,\lambda}^*\mid \lambda\Big\}
	 \]
	 intersects the ball $B_{r+\epsilon}(0)$. This implies that the convex hull of this  set also intersects that ball.  But a convex combination of  elements of this set  again has the form $a-a'$ with $a'$ a convex combination of  elements of the form $ubu^*$ with  $u\in \U(A)$.
\end{proof}

\begin{theorem}\label{mainsubmaj}
	Let $A$ be a  C*-algebra.  Let $a,b\in A_{\sa}$. The distance from $a$ to the set $\co\{ dbd^*\mid d\in A,\, \|d\|\leq 1\}$ is equal to the infimum $r\in [0,\infty)$ such that
\begin{align}
\tau((a-r-t)_+) &\leq \tau((b-t)_+)\hbox{ for all $t\in [0,\infty)$ and all $\tau\in \T(A)$,}\label{submajcond1}\\
\tau((-a-r-t)_+) &\leq \tau((-b-t)_+)\hbox{ for all $t\in [0,\infty)$ and all $\tau\in \T(A)$.}\label{submajcond2}
\end{align}		
Moreover, if $r$ is such infimum then $(a-r)_+-(a+r)_-\smaj b$.
\end{theorem}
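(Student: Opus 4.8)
The plan is to reduce the statement to the von Neumann algebra $M=A^{**}$ via Proposition~\ref{hahn-banacharg}(i)---which identifies the distance in question with the distance from $a$ to $\co\{dbd^*\mid d\in M,\ \|d\|\leq 1\}$---and then to invoke Proposition~\ref{vNsubmajorization}. Throughout I would use the elementary functional-calculus identities $((x-s)_+-t)_+=(x-s-t)_+$ and $(x_+-t)_+=(x-t)_+$, valid for $s,t\geq 0$, and the fact that submajorization is preserved under $x\mapsto -x$.

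I would first show the distance is at least the infimum. Let $s$ denote the distance, let $\epsilon>0$, and pick a contractive convex combination $b'=\frac1N\sum_i d_ibd_i^*$ with $r'':=\|a-b'\|<s+\epsilon$. Since $b\leq b_+$ we have $b'\leq\frac1N\sum_i d_ib_+d_i^*$, so Lemma~\ref{basicsmaj}(i) gives $b'_+\smaj\frac1N\sum_i d_ib_+d_i^*\smaj b_+$, while Lemma~\ref{basicsmaj}(ii) gives $(a-r'')_+\smaj b'_+$; by transitivity $(a-r'')_+\smaj b_+$. Proposition~\ref{easyimplication} then yields $(a-r'')_+\smajtau b_+$, i.e.\ $\tau((a-r''-t)_+)\leq\tau((b-t)_+)$ for all $\tau\in\T(A)$, $t\geq 0$; using $r''<s+\epsilon$ and monotonicity of $u\mapsto(a-u-t)_+$ we get $\tau((a-(s+\epsilon)-t)_+)\leq\tau((b-t)_+)$, and the same argument applied to $-a,-b$ handles \eqref{submajcond2}. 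Thus $s+\epsilon$ satisfies \eqref{submajcond1}--\eqref{submajcond2} for every $\epsilon>0$, so the infimum is $\leq s$.

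For the reverse inequality and the final assertion, assume $r$ satisfies \eqref{submajcond1}--\eqref{submajcond2}. Any $\sigma\in\T(M)$ restricts to a lower semicontinuous trace on the norm-closed subspace $A$, hence $\sigma|_A\in\T(A)$; since \eqref{submajcond1}--\eqref{submajcond2} only involve elements of $A$, they persist for all $\sigma\in\T(M)$. Rewriting them with the identities above, they say exactly that $(a-r)_+\smajtau b_+$ and $(-a-r)_+\smajtau b_-$ in $M$. Proposition~\ref{vNsubmajorization} promotes these to $(a-r)_+\smaj b_+$ and $(-a-r)_+\smaj b_-$ in $M$, hence $-(-a-r)_+\smaj -b_-$. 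As $(a-r)_+$ is orthogonal to $(-a-r)_+$ and $b_+$ is orthogonal to $b_-$, Lemma~\ref{orthosum} gives
\[
(a-r)_+-(-a-r)_+\ \smaj\ b_+-b_-=b\quad\text{in }M,
\]
i.e.\ $(a-r)_+-(a+r)_-\smaj b$ in $M$ since $(a+r)_-=(-a-r)_+$. Because $(a-r)_+-(a+r)_-\in A$, Proposition~\ref{hahn-banacharg}(i) upgrades this to a submajorization in $A$; and $\|a-((a-r)_+-(a+r)_-)\|\leq r$ by functional calculus, so $a$ lies within $r$ of $\co\{dbd^*\mid\|d\|\leq 1\}$. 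Hence the distance is $\leq r$ for every such $r$.

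Combining, the distance equals the infimum. The set of admissible $r$ is nonempty (large $r$ work) and closed (if $r_n\searrow r$ then $(a-r_n-t)_+\to(a-r-t)_+$ in norm, and lower semicontinuity of traces preserves the inequalities), so the infimum is itself admissible, and the previous paragraph then yields $(a-r)_+-(a+r)_-\smaj b$. I expect the only real subtleties to be the passage between $\T(A)$ and $\T(M)$ (handled by the trivial observation that norm--lower semicontinuity restricts) and the two applications of the Hahn--Banach reduction; once Proposition~\ref{vNsubmajorization} is in hand, everything else is bookkeeping with Lemmas~\ref{basicsmaj} and \ref{orthosum}.
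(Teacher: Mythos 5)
Your proof is correct and follows essentially the same route as the paper: the lower bound via Lemma \ref{basicsmaj} and Proposition \ref{easyimplication}, the reformulation of \eqref{submajcond1}--\eqref{submajcond2} as $(a-r)_+\smajtau b_+$ and $(a+r)_-\smajtau b_-$, and the upper bound via passage to $A^{**}$, Proposition \ref{vNsubmajorization}, and Proposition \ref{hahn-banacharg}(i). The only cosmetic deviations are that you combine the two submajorizations in $A^{**}$ (via Lemma \ref{orthosum}) before descending to $A$, whereas the paper descends each piece first and then invokes Proposition \ref{reducetopositive}, and that you spell out the (correct) closedness argument showing the infimum is itself admissible, which the paper leaves implicit.
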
	
\begin{proof}
Let $\tilde r\in (0,\infty)$ be such that  $\|a-b'\|<\tilde r$ for some  $b'\smaj b$. Then $(a-\tilde r)_+\smaj b'_+$ by Lemma \ref{basicsmaj} (ii). Also, $b'\smaj b$ implies that $(b')_+\smaj b_+$,   by Proposition \ref{reducetopositive}. Hence 
$(a-\tilde r)_+\smaj b_+$. Starting from $\|(-a)-(-b')\|<\tilde r$ and following the same line of reasoning we obtain that $(a+\tilde r)_-\smaj b_-$. Since submajorization implies tracial submajorization (Proposition \ref{easyimplication}), 
$(a-\tilde r)_+\smajtau b_+$ and $(a+\tilde r)_-\smajtau b_-$. These relations translate at once into \eqref{submajcond1} and \eqref{submajcond2} (for the number $\tilde r$). 
	
	Assume now that \eqref{submajcond1}--\eqref{submajcond2} hold for some 
	$r\in [0,\infty)$.  
	Let us show that $(a-r)_+ - (a+r)_-\smaj b$. Since the distance from $a$ to $(a-r)_+-(a+r)_-$ is $r$,
	this will complete the proof of the theorem.
	As remarked above, \eqref{submajcond1}--\eqref{submajcond2}
 can be restated as saying that $(a-r)_+\smajtau b_+$ and $(a+ r)_-\smajtau b_-$.   In view of Proposition \ref{reducetopositive}, it remains to show that $(a-r)_+\smaj b_+$ and $(a+r)_-\smaj b_-$. This boils down to showing    if $c,d\in A_+$ are such that   $c \smajtau d$ then  $c\smaj d$.  Let us prove this. It is  clear from $c\smajtau d$ in $A$ that $c\smajtau d$ in the von Neumann algebra $A^{**}$ (indeed, in any C*-algebra containing $A$), since traces 
	in $\T(A^{**})$ restrict to traces in $\T(A)$. Then, by Proposition \ref{vNsubmajorization}, 
	$c\smaj d$ in $A^{**}$.  Finally, by Proposition \ref{hahn-banacharg} (i), 
	$c\smaj d$ in $A$, as desired.
	\end{proof}	

\begin{theorem}\label{thmdistance}
	Let $A$ be a unital C*-algebra. Let $a,b\in A$ be selfadjoint elements. Then 
	the distance from $a$ to $\co\{ubu^*\mid u\in \U(A)\}$ is equal to the infimum $r\in [0,\infty)$
	such that
		\begin{align}
		 \tau((a-r-t)_+) &\leq \tau((b-t)_+)\hbox{ for all $t\in \R$ and all $\tau\in \T(A)$,}\label{smajconds1}\\
		 \tau((-a-r-t)_+ &\leq \tau((-b-t)_+)\hbox{ for all $t\in \R$ and all $\tau\in \T(A)$.}\label{smajconds2}
		 \end{align}
\end{theorem}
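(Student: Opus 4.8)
The plan is to follow the same template as the proof of Theorem~\ref{mainsubmaj}: use the Hahn--Banach argument of Proposition~\ref{hahn-banacharg}(ii) to pass to the von Neumann algebra $A^{**}$, then invoke Proposition~\ref{vNmajorization} in place of Proposition~\ref{vNsubmajorization}. Write $S$ for the set of $r\in[0,\infty)$ satisfying \eqref{smajconds1}--\eqref{smajconds2}. Since $(a-r'-t)_+\le(a-r-t)_+$ whenever $r\le r'$ (and likewise for $-a$), the set $S$ is upward closed, and lower semicontinuity of traces makes it closed under decreasing limits, so $S=[\inf S,\infty)$. Hence it is enough to prove: (I) if $r\in S$ then the distance from $a$ to $\co\{ubu^*\mid u\in\U(A)\}$ is at most $r$; and (II) if that distance is strictly less than $r$ then $r\in S$. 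The first gives $\mathrm{dist}\le\inf S$, the second $\inf S\le\mathrm{dist}$.

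For (II) I would argue directly in $A$, using the elementary fact that $x\le y$ in $A_{\sa}$ forces $\tau(x_+)\le\tau(y_+)$ for every $\tau\in\T(A)$, which is immediate from Lemma~\ref{basicsmaj}(i) (giving $x_+\smaj y_+$) together with Proposition~\ref{easyimplication} (take $t=0$ in the definition of $\smajtau$). Suppose $\|a-a'\|<r$ with $a'=\frac1n\sum_{i=1}^nu_ibu_i^*$. Lemma~\ref{basicsmaj}(ii) gives $(a-r)_+\smaj a'_+$, and since $a'\smaj b$, Proposition~\ref{reducetopositive} gives $a'_+\smaj b_+$; thus $(a-r)_+\smaj b_+$, so Lemma~\ref{basicsmaj}(iii) and Proposition~\ref{easyimplication} yield \eqref{smajconds1} for all $t\ge0$. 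For $t<0$ one uses $a-r<a'$, which gives $a-r-t<a'-t$ and hence $\tau((a-r-t)_+)\le\tau((a'-t)_+)$ by the auxiliary fact; combining this with $a'-t=\frac1n\sum_iu_i(b-t)u_i^*\smaj b-t$ (which by Proposition~\ref{reducetopositive} gives $(a'-t)_+\smaj(b-t)_+$, hence $\tau((a'-t)_+)\le\tau((b-t)_+)$) produces \eqref{smajconds1} for negative $t$ as well. Applying the same argument to $-a$ and $-b$ gives \eqref{smajconds2}, so $r\in S$.

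For (I) I would first reduce to a von Neumann algebra: by Proposition~\ref{hahn-banacharg}(ii) it suffices to bound the distance from $a$ to $\co\{ubu^*\mid u\in\U(A^{**})\}$, and every trace in $\T(A^{**})$ restricts to one in $\T(A)$, so the hypotheses \eqref{smajconds1}--\eqref{smajconds2} over $\T(A)$ entail them over $\T(A^{**})$. Assume then that $A=M$ is a von Neumann algebra, and after an affine substitution $a\mapsto\alpha a+\beta$, $b\mapsto\alpha b+\beta$ with $\alpha>0$ --- under which the distance, the whole family of trace inequalities, and the parameter $r$ all rescale compatibly --- assume further that $a,b$ are positive contractions. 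It then remains to check that \eqref{smajconds1}--\eqref{smajconds2} imply the two hypotheses $(a-r)_+\smajtau b$ and $(1-a-r)_+\smajtau 1-b$ of Proposition~\ref{vNmajorization}: via the functional-calculus identity $((w)_+-s)_+=(w-s)_+$ for $s\ge0$, the first unwinds to exactly \eqref{smajconds1} with $t\ge0$, and, using $(x)_+=(-x)_-$, the second unwinds to $\tau((a+r+t)_-)\le\tau((b+t)_-)$ for $t\ge-1$, a special case of the rewritten \eqref{smajconds2}. Proposition~\ref{vNmajorization} then puts $a$ within distance $r$ of $\co\{ubu^*\mid u\in\U(M)\}$, and reversing the affine substitution completes the proof. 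Since the substantive analytic work already resides in Propositions~\ref{hahn-banacharg}, \ref{vNsubmajorization} and \ref{vNmajorization}, I expect the only genuine obstacle to be this last translation step --- matching the symmetric $t\in\R$ tracial inequalities against the asymmetric one-sided hypotheses of Proposition~\ref{vNmajorization} (phrased with $(a-r)_+$ and $1-b$), keeping precise track of which cut-points are actually used and of the interplay between $(\cdot)_+$ and $(\cdot)_-$.
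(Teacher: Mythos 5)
Your proposal is correct and follows essentially the same strategy as the paper: reduce to positive contractions via an affine substitution, pass to $A^{**}$ via Proposition~\ref{hahn-banacharg}(ii), and match the tracial inequalities against the hypotheses of Proposition~\ref{vNmajorization} for the nontrivial direction, while the converse direction is handled elementarily through submajorization and Proposition~\ref{easyimplication}. The only cosmetic difference is in direction (II): where you bound $\tau((a-r-t)_+)\le\tau((a'-t)_+)\le\tau((b-t)_+)$ uniformly in $t$ via $a-r\le a'$ and $a'-t\smaj b-t$, the paper first establishes $(a-\tilde r)_+\smajtau b$ (which handles $t\ge0$) and then extends to $t<0$ using the inequality $(a-\tilde r-t)_+\le (a-\tilde r)_+ - t$ together with positivity of $b$; both routes are valid and of comparable length.
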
	
\begin{proof}
If we replace $a$  by $a+s\cdot 1 $ and $b$  by
$b+s\cdot 1$ for some $s\in \R$ then neither the infimum $r$ satisfying  
\eqref{smajconds1}--\eqref{smajconds2}
nor the distance from $a$ to $\co\{ubu^*\mid u\in \U(A)\}$ is changed.   Thus, by choosing a sufficiently large $s$ we may assume that $a$ and $b$ are positive.	
A simple calculation also shows  that if we replace $a$ by $a/s'$ and $b$ by $b/s'$ for some $s'\in (0,\infty)$ then both the infimum $r$ satisfying \eqref{smajconds1}--\eqref{smajconds2} 
and the distance from $a$ to $\co\{ubu^*\mid u\in \U(A)\}$ get multiplied by a factor
of $1/s'$. Thus, by choosing a sufficiently large $s'$ we may assume that $a$
and $b$ are positive contractions. We do so henceforth.

Let $r\in (0,\infty)$  be any number satisfying 	 \eqref{smajconds1}--\eqref{smajconds2}. 
 From \eqref{smajconds1} we deduce that $
(a-r)_+\smajtau b$ while from \eqref{smajconds2} we deduce that  
$(1-a-r)_+\smajtau 1-b$.
Thus, by Proposition \ref{vNmajorization},  $a$ is within a distance $r$ of $\co\{ubu^*\mid u\in \U(A^{**})\}$. Then, by Proposition \ref{hahn-banacharg} (ii),
$a$ is within a distance $r$ of $\co\{ubu^*\mid u\in \U(A)\}$. This proves one inequality.

Let  $\tilde r\in (0,\infty)$ be any number such that $\|a-b'\|<\tilde r$ for some 
 $b'\in \co\{ubu^*\mid u\in \U(A)\}$. By Lemma \ref{basicsmaj} (ii), $(a-\tilde r)_+\smaj b'\smaj b$. Since submajorization implies tracial submajorization  (for positive elements) we have that $(a-\tilde r)_+\smajtau b$. That is,
 \[
 \tau((a-\tilde r-t)_+)\leq \tau((b-t)_+)
 \]
 for all $t\in [0,\infty)$ and all $\tau\in \T(A)$. Using that $b$ is positive we can  extend this inequality to  all $t<0$. Indeed, if $t<0$ then
 $(a-\tilde r-t)_+\leq (a-\tilde r)_+-t$, and so  
 \begin{align*}
 \tau((a-\tilde r-t)_+) &\leq \tau((a-\tilde r)_+)+\tau(-t)\\
 &\leq \tau(b)+\tau(-t)\\
 &=\tau((b-t)_+),
 \end{align*}
 for all $\tau\in \T(A)$.
 Thus, \eqref{smajconds1} holds for $\tilde r$.
Applying the same arguments starting from  $\|(1-a)-(1-b')\|<\tilde r$ we deduce that $\tau((1-a-\tilde r-t)_+\leq \tau ((1-b-t)_+)$ for all $\tau\in \T(A)$ and all $t\in \R$.
This is equivalent to \eqref{smajconds2}.
\end{proof}	

\begin{proof}[Proof of Theorem \ref{mainthm}]
	This is the case $r=0$ of Theorem \ref{thmdistance}.
\end{proof}

\begin{remark}\label{positivecontractions}
The following observation, whose verification is left to the reader, will be useful below: if $a,b\in A$ are positive contractions
the tracial inequalities in Theorem \ref{mainthm} (ii) are equivalent to the tracial submajorizations $a\smajtau b$ and $1-a\smajtau 1-b$.	
\end{remark}

Let us  explore some consequences of Theorem \ref{mainthm}.

\begin{corollary}\label{simple}
Let $A$ be a simple unital C*-algebra. Let $a,b\in A_{\sa}$.
\begin{enumerate}[(i)]
\item
If $A$ has  at least one non-zero bounded trace then $a\maj b$ if and only if
$\tau(a)=\tau(b)$ and $\tau((a-t)_+)\leq \tau((b-t)_+)$ for all $t\in \R$ and all bounded traces $\tau$ on $A$.
\item
If $A$ has no bounded traces then $a\maj b$ if and only if $\mathrm{sp}(a)\subseteq \co(\mathrm{sp}(b))$.
\end{enumerate}
\end{corollary}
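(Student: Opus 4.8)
The engine for both parts is Theorem~\ref{mainthm} together with an identification of $\T(A)$ when $A$ is simple and unital. The plan is first to show that in this case $\T(A)$ consists exactly of the bounded traces (equivalently, the l.s.c.\ traces $\tau$ with $\tau(1)<\infty$) together with the single trace $\tau_{\{0\}}$ that is $0$ at $0$ and $\infty$ elsewhere. Indeed, if $\tau\in\T(A)$ is finite at some nonzero $a\in A_+$, then since $A$ is simple $a$ is full, so $1$ lies in the closed ideal it generates; as $1$ is invertible this yields $n\in\N$ and $z_1,\dots,z_n\in A$ with $1=\sum_{i=1}^n z_ia z_i^*$, whence $\tau(1)=\sum_{i=1}^n\tau(a^{1/2}z_i^*z_ia^{1/2})\leq\big(\sum_i\|z_i\|^2\big)\tau(a)<\infty$; so $\tau$ is bounded, and the only remaining possibility is $\tau=\tau_{\{0\}}$. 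I would also record two elementary facts. (1) Since $(c-t)_+=0$ exactly when $t\geq\max\mathrm{sp}(c)$, the two $\tau_{\{0\}}$-inequalities of Theorem~\ref{mainthm}(ii) hold iff $\max\mathrm{sp}(a)\leq\max\mathrm{sp}(b)$ and $\max\mathrm{sp}(-a)\leq\max\mathrm{sp}(-b)$, i.e.\ iff $\mathrm{sp}(a)\subseteq\co(\mathrm{sp}(b))$. (2) A nonzero bounded trace $\tau$ on a simple C*-algebra is faithful, since $\{x\in A:\tau(x^*x)=0\}$ is a closed two-sided ideal, necessarily $\{0\}$; hence $\tau((c-t)_+)>0$ for $t<\max\mathrm{sp}(c)$, and therefore the two families $\tau((a-t)_+)\leq\tau((b-t)_+)$ and $\tau((-a-t)_+)\leq\tau((-b-t)_+)$ (all $t$) already force $\mathrm{sp}(a)\subseteq\co(\mathrm{sp}(b))$.

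Part (ii) is then immediate: if $A$ has no nonzero bounded trace, $\T(A)=\{0,\tau_{\{0\}}\}$, so by Theorem~\ref{mainthm} $a\maj b$ iff the $\tau_{\{0\}}$-inequalities hold iff $\mathrm{sp}(a)\subseteq\co(\mathrm{sp}(b))$, using fact (1). For the forward direction of part (i): if $a\maj b$ then Theorem~\ref{mainthm}(ii) holds for every $\tau\in\T(A)$, in particular for every bounded trace $\tau$ (extended linearly to $A_{\sa}$); taking $t$ so negative that $a-t$, $b-t$ and also $-a-t$, $-b-t$ are positive, the two inequalities become $\tau(a)-t\tau(1)\leq\tau(b)-t\tau(1)$ and $-\tau(a)-t\tau(1)\leq-\tau(b)-t\tau(1)$, forcing $\tau(a)=\tau(b)$; the inequality $\tau((a-t)_+)\leq\tau((b-t)_+)$ for all $t$ is part of Theorem~\ref{mainthm}(ii) verbatim.

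For the converse in (i), suppose $\tau(a)=\tau(b)$ and $\tau((a-t)_+)\leq\tau((b-t)_+)$ for all $t\in\R$ and all bounded traces $\tau$. I must verify conditions (ii) of Theorem~\ref{mainthm} for every $\tau\in\T(A)$. For a bounded $\tau$ the first family is given; for the second, the functional-calculus identity $(s-c)_+=(c-s)_+-c+s\cdot 1$ gives, for finite $\tau$, $\tau((-a-t)_+)=\tau((a+t)_+)-\tau(a)-t\tau(1)$ and likewise for $b$, so $\tau(a)=\tau(b)$ together with $\tau((a-s)_+)\leq\tau((b-s)_+)$ (applied with $s=-t$) yields $\tau((-a-t)_+)\leq\tau((-b-t)_+)$. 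Thus both families hold for every bounded trace; then fact (2), applied to a nonzero bounded trace (which exists by hypothesis), gives $\mathrm{sp}(a)\subseteq\co(\mathrm{sp}(b))$, hence the $\tau_{\{0\}}$-inequalities by fact (1). So Theorem~\ref{mainthm}(ii) holds and $a\maj b$. The only nontrivial ingredient is the identification of $\T(A)$ in the first paragraph---that every l.s.c.\ trace on a simple unital C*-algebra other than $\tau_{\{0\}}$ is bounded---and I expect that to be the main point to get right; everything else is bookkeeping with functional calculus and the explicit description of $\tau_{\{0\}}$.
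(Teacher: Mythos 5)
Your proposal is correct and follows essentially the same route as the paper: identify $\T(A)$ for simple unital $A$ as the bounded traces together with the trace $\tau_{\{0\}}$ (the paper's $\tau_\infty$), then verify the tracial inequalities of Theorem~\ref{mainthm}(ii) separately for bounded traces and for $\tau_{\{0\}}$, using faithfulness of a nonzero bounded trace and the functional-calculus identity $(-c-t)_+ = (c+t)_+-(c+t)$. The only place you move a bit too fast is the step where you pass from ``$1$ lies in the closed ideal generated by $a$'' to ``$1=\sum_{i=1}^n z_i a z_i^*$'': density of the algebraic ideal in a unital Banach algebra gives $1=\sum x_i a y_i$, but converting this to the positive form needs one more observation, e.g.\ that $b:=\sum\bigl(x_i a x_i^* + y_i^* a y_i\bigr)\geq 2\,\mathrm{Re}\bigl(\sum x_i a y_i\bigr)$ is invertible (choosing the $x_i,y_i$ so that $\|1-\sum x_iay_i\|<1$), whence $1=\sum (b^{-1/2}x_i)a(b^{-1/2}x_i)^*+\sum(b^{-1/2}y_i^*)a(b^{-1/2}y_i^*)^*$. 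The paper simply takes the description of $\T(A)$ as known, so your sketch actually supplies more detail than the published argument, modulo this one shortcut.
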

\begin{proof}
The implications starting with $a\maj b$ in both (i) and (ii) are
straightforward from Theorem \ref{mainthm}. To prove the converse  
we will show in both cases that the tracial inequalities from Theorem \ref{mainthm} hold. 

(i) Let us suppose that $A$ has at least one non-zero bounded trace. 
Since $A$ is simple and unital, $\T(A)$ consists of the bounded traces on $A$ and the trace $\tau_\infty(a):=\infty$
for all $a\neq 0$ and $\tau_\infty(0):=0$. We have assumed that $\tau((a-t)_+)\leq \tau((b-t)_+)$ for all bounded traces and all $t\in \R$.  Let us  show that 
\begin{equation}\label{tauinftycheck}
\tau_\infty((a-t)_+)\leq \tau_\infty((b-t)_+)
\end{equation}
for all $t\in \R$. It suffices to show that the left side is zero for all $t\geq \|b_+\|$.
We have that 
\[
\tau((a-\|b_+\|)_+)\leq \tau((b-\|b_+\|)_+)=0,
\] 
for all bounded traces $\tau$. Since $A$ has at least one non-zero bounded trace---which is necessarily faithful because $A$ is simple---we get that $(a-\|b_+\|)_+=0$. This implies \eqref{tauinftycheck}.

Let us prove that $\tau((-a-t)_+)\leq \tau((-b-t)_+)$ for all $\tau\in \T(A)$ and all $t\in \R$.  Let $t\in \R$. Let $\tau$ be a bounded trace (which we assume defined on all $A$). Observe that 
\[
(-c-t)_+=(c+t)_+-(c+t),
\]
for any selfadjoint element $c$. Thus, as $\tau(a)=\tau(b)$,
\begin{align*}
\tau((-a-t)_+)&=\tau((a+t)_+)-\tau(a+t)\\
&\leq 
\tau((b+t)_+)-\tau(b+t)=\tau((-b-t)_+).
\end{align*}
 To get that $\tau_\infty((-a-t)_+)\leq \tau_\infty((-b-t)_+)$ we proceed as in the previous paragraph. Exploiting the existence of a non-zero (faithful) bounded trace
 we deduce that $(a+\|b_-\|)_-=0$ (since $(b+\|b_-\|)_-=0$), from which the desired inequality readily follows.

(ii) Suppose that $A$ has no non-zero bounded traces. Then $\T(A)$ consists only of $\tau_\infty$ and the zero trace. Since $\mathrm{sp}(a)\subseteq \co(\mathrm{sb}(b))$, we have that
  $\|a_+\|\leq \|b_+\|$ and $\|a_-\|\leq \|b_-\|$. It is readily verified from this that $\tau_\infty((a-t)_+)\leq \tau_\infty((b-t)_+)$
and $\tau_\infty((-a-t)_+)\leq \tau_\infty((-b-t)_+)$ for all $t\in \R$, as desired.
\end{proof}

\begin{theorem}
Let $A$ be a unital C*-algebra. Let  $a$  be a selfadjoint element in $A$. Then $0\in \overline{\co\{uau^*\mid u\in \U(A)\}}$
if and only if
\begin{enumerate}[(a)]
\item
$\tau(a)=0$ for all bounded traces $\tau$ on $A$, and
\item
in no nonzero quotient of $A$ can the image of $a$ be either invertible and positive or invertible and negative.
\end{enumerate}
\end{theorem}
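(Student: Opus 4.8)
By Theorem~\ref{mainthm} applied with majorized element $0$ and majorizing element $a$, the condition $0\in\overline{\co\{uau^*\mid u\in\U(A)\}}$ is equivalent to the inequalities $\tau((0-t)_+)\leq\tau((a-t)_+)$ and $\tau((-0-t)_+)\leq\tau((-a-t)_+)$ holding for all $\tau\in\T(A)$ and all $t\in\R$. Since both $(0-t)_+$ and $(-0-t)_+$ equal $0$ for $t\geq 0$ and equal $(-t)\cdot 1$ for $t<0$, this reduces to: $s\tau(1)\leq\tau((a+s)_+)$ and $s\tau(1)\leq\tau((s-a)_+)$ for all $\tau\in\T(A)$ and all $s>0$. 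The plan is to show that this family of inequalities is equivalent to the conjunction of (a) and (b), by splitting $\T(A)$ into the bounded and the unbounded traces.

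Step 1 (bounded traces give (a)). If $\tau(1)<\infty$, extend $\tau$ to a positive linear functional on $A$; from $(a+s)_+\geq a+s\cdot 1$ we get $\tau((a+s)_+)\geq\tau(a)+s\tau(1)$, so the first inequality holds for all $s>0$ once $\tau(a)\geq 0$, and conversely choosing $s$ large enough that $a+s\cdot 1\geq 0$ turns $(a+s)_+$ into $a+s\cdot 1$ and forces $\tau(a)\geq 0$. Symmetrically, the second inequality holds for all $s>0$ precisely when $\tau(a)\leq 0$. Thus the inequalities restricted to bounded traces say exactly that $\tau(a)=0$ for every bounded trace, which is condition (a).

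Step 2 (unbounded traces give (b)). If $\tau(1)=\infty$, the inequalities say simply that $\tau((a+s)_+)=\tau((s-a)_+)=\infty$ for all $s>0$. If (b) fails, there is a nonzero quotient $\pi_J\colon A\to A/J$ in which, say, $\pi_J(a)$ is invertible and negative, hence $\pi_J(a)\leq -\delta\cdot 1$ for some $\delta>0$; then $(a+\delta)_+\in J$, and the lower semicontinuous trace $\tau_J$, which has $\tau_J(1)=\infty$ because $J\neq A$, satisfies $\tau_J((a+\delta)_+)=0$, contradicting the inequalities (the case $\pi_J(a)$ invertible and positive being identical via $(s-a)_+$). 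Conversely, assume (b) and suppose for contradiction that some $\tau$ with $\tau(1)=\infty$ has $\tau((a+s_0)_+)<\infty$ for some $s_0>0$. The key auxiliary fact is that, in a unital C*-algebra, a positive element $c$ with $\tau(c)<\infty$ cannot generate $A$ as a closed two-sided ideal when $\tau(1)=\infty$: if it did, some finite sum $w=\sum_{j=1}^m a_jcb_j$ would be invertible, so writing $c=c^{1/2}c^{1/2}$ the row $x=(a_1c^{1/2},\dots,a_mc^{1/2})$ would have a right inverse $v$ in a matrix amplification of $A$, giving $1=xvv^*x^*\leq\|v\|^2\,xx^*=\|v\|^2\sum_{j=1}^m a_jca_j^*$ and hence $\tau(1)\leq\|v\|^2\sum_{j=1}^m\|a_j\|^2\,\tau(c)<\infty$, a contradiction. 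Applying this with $c=(a+s_0)_+$, the closed ideal $J$ it generates is proper, and in the nonzero quotient $A/J$ one has $\pi_J((a+s_0)_+)=0$, i.e.\ $\pi_J(a)\leq -s_0\cdot 1$, so $\pi_J(a)$ is invertible and negative, contradicting (b); the inequality with $(s-a)_+$ is handled symmetrically, yielding a quotient in which $\pi_J(a)$ is invertible and positive.

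Combining Steps 1 and 2, the tracial inequalities of Theorem~\ref{mainthm} hold if and only if both (a) and (b) hold, which is the desired statement. I expect the only step that is not routine rewriting to be the auxiliary fact in Step 2 (finite-trace positive elements cannot generate the whole unital algebra when the trace is unbounded); the matrix-amplification/right-inverse argument indicated above disposes of it, and everything else is bookkeeping with functional calculus and lower semicontinuity.
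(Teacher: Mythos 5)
Your proof is correct and follows essentially the same route as the paper: reduce via Theorem \ref{mainthm} to the tracial inequalities, handle bounded traces by the elementary estimate $(a+s)_+\geq a+s\cdot 1$, and handle unbounded traces by relating finiteness of $\tau((a+s)_+)$ to fullness of $(a+s)_+$ and hence to quotients. The only difference is that you spell out (via the matrix-amplification argument) the fact that a full positive element cannot have finite value under an unbounded trace in a unital algebra, a point the paper's proof asserts with the word ``equivalently''; your write-up is complete.
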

\begin{proof}
The necessity of the conditions is relatively straightforward.  Since all the elements in the set  
$\overline{\co\{uau^*\mid u\in \U(A)\}}$ agree on bounded traces, we have (a).
If $a\geq \alpha\cdot  1$ for some $\alpha\in (0,\infty)$ then the same holds for all elements
in  $\overline{\co\{uau^*\mid u\in \U(A)\}}$, which prevents 0 from belonging to this set. Similarly, we cannot have that $a\leq -\alpha 1$. Moreover, if 0 is in the closure of the convex hull of the unitary conjugates of $a$ the same holds for the image of $a$ on any quotient. So we have (b).

Suppose now that (a) and (b) hold.  To prove the theorem we use Theorem \ref{mainthm}. We must check that $\tau((0-t)_+)\leq \tau((a-t)_+)$
for all $t\in \R$ and all $\tau\in \T(A)$. This boils down to showing that  
$\tau(t)\leq \tau((a+t)_+)$ for all $t>0$ and all $\tau\in \T(A)$. Let $t>0$. Suppose first that 
$\tau$ is a bounded trace (so assume that it is defined on all $A$). 
Evaluating $\tau$ on $(a+t)_+\geq a+t$ we get $\tau((a+t)_+)\geq \tau(t)$, as desired.
Suppose now that $\tau$ is unbounded.  Since  $\tau(t)=\infty$ we must show  that $\tau((a+t)_+)=\infty$. Equivalently, we must show  that $(a+t)_+$ is full, i.e., it
generates $A$ as a closed two-sided ideal.  But if this were not  the case   then in the quotient by the closed two-sided ideal
generated by $(a+t)_+$ we would have that $\overline a+t\leq 0$ (where $\overline a$ denotes the image of $a$ in this quotient). This contradicts (2). Thus, $(a+t)_+$
is full, as desired. Since $-a$ satisfies (1) and (2) too, we also arrive at $\tau((-0-t)_+)\leq \tau((-a-t)_+)$ for all $t\in \R$ and all $\tau\in \T(A)$.
 By Theorem \ref{mainthm}, $0\maj a$, as desired.
 \end{proof}

\section{Uniform majorization}\label{uniform}
 In this section we discuss the majorization relation in the context of regularity properties of C*-algebras. We show that  one has a uniform version of majorization holding across all C*-algebras of either one of the following classes: 
 \begin{enumerate}[(1)]
 	\item
 	C*-algebras  satisfying Blackadar's strict comparison of positive elements by traces, 
 	\item
 	 C*-algebras having a uniform bound on their  nuclear dimension.
 \end{enumerate}	 
 	  In both cases we derive the uniform  majorization
from the preservation of the relation of tracial submajorization  under products of C*-algebras in the given class (Propositions \ref{prodstrict} and \ref{prodnuc}).

Let us recall some  definitions. 
Let $A$ be a C*-algebra. Let $\mathcal K$ denote the C*-algebra of compact operators on a separable infinite dimensional Hilbert space. Let $\tau\in \T(A)$. We can extend $\tau$  to a trace on $(A\otimes\mathcal K)_+$
by setting 
\[
\tau((a_{i,j}))=\sum_{i=1}^\infty \tau(a_{i,i})
\] for all $(a_{i,j})_{i,j}\in (A\otimes \mathcal K)_+$. From $\tau$ we obtain
a ``dimension function" $d_\tau\colon (A\otimes \mathcal K)_+\to [0,\infty]$  defined as
\[
d_\tau(a)=\lim_n \tau(a^{\frac 1 n}),
\] 
for all $a\in (A\otimes \mathcal K)_+$.
(Alternatively, $d_\tau(a)$ is the norm of the restriction of $\tau$ to 
$\overline{a(A\otimes \mathcal K)a}$.) 

Next, let us recall the definition of the Cuntz comparison  relation among positive elements:
Given  positive elements $a,b\in A\otimes \mathcal K$, $a$ is said to be Cuntz subequivalent  to $b$ if there exist
 $e_1,e_2,\ldots\in A\otimes \mathcal K$ such that $e_nbe_n^*\to a$.
We denote this relation by $a\precsim_{\mathrm{Cu}}b$.

 The C*-algebra $A$ is said to have the property of strict comparison of positive elements by traces if
for all $a,b\in (A\otimes\mathcal K)_+$ and $\epsilon>0$   we have that
\begin{equation}\label{strictcomp}
d_\tau(a)\leq (1-\epsilon)d_\tau(b)\hbox{ for all $\tau\in \T(A)$ implies that  }
a\precsim_{\mathrm{Cu}}b.
\end{equation}
(Note: A number of different variations on ``strict comparison''  exist in the literature; e.g.,   one may  restrict $\tau$ to 
be a  bounded trace, or allow it to be a 2-quasitrace; one may restrict $a,b$ to be in $A$, etc.) 

We will make use of the topology on $\T(A)$ introduced in \cite{elliott-robert-santiago}. Let us recall it here: a net $(\tau_\lambda)_\lambda$ in $\T(A)$ converges to $\tau$
if for all $a\in A_+$ and $\epsilon>0$ we have
\[
\limsup_\lambda \tau_\lambda((a-\epsilon)_+)\leq \tau(a)\leq \liminf_\lambda\tau_\lambda(a).
\]
It is shown in \cite[Theorem 3.7]{elliott-robert-santiago} that $\T(A)$ is compact and Hausdorff under this topology.

The following variation on the strict comparison property has been introduced in \cite{ng-robert}: Let $K\subseteq \T(A)$ be a compact set.  Then $A$ is said to have strict  comparison of positive elements by traces in $K$ if for all $a,b\in (A\otimes \mathcal K)_+$ and $\epsilon>0$ it suffices  to let $\tau$ range through $K$ in \eqref{strictcomp} for this implication to hold.

The following proposition is essentially obtained in \cite{ng-robert}:

\begin{proposition}\label{prodstrict}
Let $A_1,A_2,\dots$ be C*-algebras with strict comparison of positive elements by traces. Let 
$a=(a_n)_{n=1}^\infty$ and $b=(b_n)_{n=1}^\infty$ be positive elements in 
$\prod_{n=1}^\infty A_n$
such that  $a_n\smajtau b_n$ for all $n$. Then $a\smajtau b$ in $\prod_{n=1}^\infty A_n$.
\end{proposition}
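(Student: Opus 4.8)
The plan is to reduce, via the distance formula of Theorem~\ref{mainsubmaj}, to checking a tracial inequality against each individual trace on the product, and then to split such a trace into a coordinatewise part (handled directly by the hypothesis) and a part supported ``at infinity'' (handled by the strict comparison results of \cite{ng-robert}).

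First I would make some reductions. By the $r=0$ case of Theorem~\ref{mainsubmaj}, the hypothesis $a_n\smajtau b_n$ is equivalent to $a_n\smaj b_n$ in $A_n$, so in particular $\|a_n\|\le\|b_n\|$ for all $n$; rescaling, I may assume $\|b\|\le 1$, so that $a$ and $b$ are positive contractions. By Theorem~\ref{mainsubmaj} again it suffices to show $\tau((a-t)_+)\le\tau((b-t)_+)$ for every $\tau\in\T(\prod_n A_n)$ and every $t\ge 0$. Since $(a-t)_+=((a_n-t)_+)_n$, $(b-t)_+=((b_n-t)_+)_n$, and $a_n\smaj b_n$ gives $(a_n-t)_+\smaj (b_n-t)_+$ by Lemma~\ref{basicsmaj}(iii), hence $(a_n-t)_+\smajtau (b_n-t)_+$ by Proposition~\ref{easyimplication}, the pair $(a-t)_+,(b-t)_+$ again has coordinates in the submajorization relation. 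Thus it is enough to treat the case $t=0$: for all positive contractions $a=(a_n)_n,\ b=(b_n)_n$ in $\prod_n A_n$ with $a_n\smajtau b_n$ for every $n$, one has $\tau(a)\le\tau(b)$ for every $\tau\in\T(\prod_n A_n)$.

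Now fix $\tau\in\T(\prod_n A_n)$ and put $I=\bigoplus_n A_n$. Regarding each $A_n$ as an ideal of $\prod_n A_n$ in the usual way, define $\tau'\colon(\prod_n A_n)_+\to[0,\infty]$ by $\tau'((x_n)_n)=\sum_n\tau(x_n)$. A routine argument using the finite cut-offs $\sum_{n\le N}x_n\le (x_n)_n$ together with lower semicontinuity of $\tau$ shows that $\tau'$ is a lower semicontinuous trace with $\tau'\le\tau$ on positive elements and $\tau'=\tau$ on $I$; consequently $\tau'':=\tau-\tau'$ is a trace on $\prod_n A_n$ vanishing on $I$, i.e.\ factoring through the quotient $\prod_n A_n/I$. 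The $\tau'$-part is immediate: $\tau'(a)=\sum_n\tau(a_n)\le\sum_n\tau(b_n)=\tau'(b)$, using only the hypothesis $a_n\smajtau b_n$ at $t=0$. Everything therefore comes down to the single inequality $\tau''(a)\le\tau''(b)$, equivalently $\bar\tau(\bar a)\le\bar\tau(\bar b)$ in $\prod_n A_n/I$, where $\bar\tau$ is the induced trace and $\bar a,\bar b$ the images of $a,b$.

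This last inequality is the crux, and it is precisely here that strict comparison is needed: without it the proposition may fail, as Example~\ref{nouniformmaj} shows. The mechanism, which is essentially the content of \cite{ng-robert}, is that strict comparison in each $A_n$ furnishes a control on the majorization $a_n\smaj b_n$ that is uniform in $n$. Concretely, one expects to show that $\prod_n A_n$ has strict comparison relative to a suitable compact set $K\subseteq\T(\prod_n A_n)$ assembled from the $\T(A_n)$ (which are themselves compact by \cite{elliott-robert-santiago}), that the hypothesis $a_n\smajtau b_n$ for all $n$ forces the tracial submajorization inequalities for $a,b$ against every trace in $K$, and that strict comparison relative to $K$ then propagates these inequalities to all of $\T(\prod_n A_n)$ — in particular to the trace $\tau$ at hand, yielding $\tau''(a)\le\tau''(b)$ and hence $\tau(a)\le\tau(b)$. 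I expect the real difficulty to reside in this step: understanding the traces of the corona-type quotient $\prod_n A_n/I$ and extracting from ``$a_n\smaj b_n$ for every $n$'' a uniform-in-$n$ form of majorization that survives the passage to that quotient.
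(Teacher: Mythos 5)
Your high-level plan is in fact the one the paper follows — let $K\subseteq\T(\prod_n A_n)$ be the closure of the union of the images of the $\T(A_n)$, verify the tracial inequalities against traces in $K$, then use the \cite{ng-robert} results on strict comparison in $K$ to propagate the inequalities to all of $\T(\prod_n A_n)$. But you explicitly stop short of carrying out the step that you yourself flag as the crux. That step is not routine, and leaving it as ``one expects to show'' is where the gap lies. Concretely, two things are needed. First, the precise fact from \cite{ng-robert} used here is twofold: the proof of their Theorem~4.1 shows that $\prod_n A_n$ has strict comparison of positive elements by traces relative to the compact set $K=\overline{\bigcup_n\T(A_n)}$, and their Lemma~3.4 shows that strict comparison relative to a compact $K$ forces $\tau(c)\le\tau(d)$ for all $\tau\in\T(A)$ whenever it holds for $\tau\in K$. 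Second — and this is the part your proposal omits entirely — one must check that the hypothesis $a_n\smajtau b_n$ yields $\tau((a-t)_+)\le\tau((b-t)_+)$ not just for $\tau$ in $\bigcup_n\T(A_n)$ but for every $\tau$ in the \emph{closure} $K$. This requires the topology on $\T(\cdot)$ from \cite{elliott-robert-santiago} and a genuine $\limsup/\liminf$ argument: for a net $\tau_\lambda\to\tau$ with $\tau_\lambda\in\bigcup_n\T(A_n)$, one estimates $\tau((a-t-\epsilon)_+)\le\liminf_\lambda\tau_\lambda((a-t-\epsilon)_+)\le\liminf_\lambda\tau_\lambda((b-t-\epsilon)_+)\le\tau((b-t)_+)$ and then lets $\epsilon\to 0$ using lower semicontinuity. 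Without this, nothing has been proved.

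Separately, your decomposition $\tau=\tau'+\tau''$ with $\tau'((x_n)_n)=\sum_n\tau(x_n)$ and $\tau''$ vanishing on $\bigoplus_n A_n$ is a detour the paper does not take and which buys you nothing: once you have the inequalities for all $\tau\in K$ and the propagation result of \cite[Lemma~3.4]{ng-robert}, you get $\tau(a)\le\tau(b)$ directly for every $\tau\in\T(\prod_n A_n)$, whether or not it vanishes on the ideal. The decomposition also quietly requires verifying that $\tau'$ is a well-defined lower semicontinuous trace dominated by $\tau$ — plausible, but extra work for no gain. Finally, the rescaling to make $a,b$ contractions is harmless but unnecessary; the argument works for arbitrary positive elements.
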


\begin{proof}
Let us regard $\T(A_n)$ embedded in  $\T(\prod_{n=1}^\infty A_n)$ via the map induced by the projection 
from $\prod_{n=1}^\infty A_n$ onto $A_n$. Let $K=\overline{\bigcup_{n=1}^\infty \T(A_n)}\subseteq \T(\prod_{n=1}^\infty A_n)$.
In the course of the proof of \cite[Theorem 4.1]{ng-robert} it is shown that the C*-algebra $\prod_{n=1}^\infty A_n$ has strict
comparison of positive elements by traces in $K$.
 The elements $a$ and $b$
from  the statement of the theorem satisfy that $\tau((a-t)_+)\leq \tau((b-t)_+)$ for all $\tau\in \bigcup_{n=1}^\infty \T(A_n)$ and $t\geq 0$ (this holds by assumption). Let us shows that  these inequalities extend to all traces in $K$. Let $\tau\in K$ and choose a net $\tau_\lambda\to \tau$ with $\tau_\lambda\in \bigcup_{n=1}^\infty \T(A_n)$. From  the definition of the topology in $\T(A)$   we get that
\begin{align*}
\tau((a-t-\epsilon)_+) &\leq \liminf\tau_\lambda((a-t-\epsilon)_+)\\
&\leq \liminf\tau_\lambda((b-t-\epsilon)_+)\\
&\leq \tau((b-t)_+),
\end{align*}
for all $t\geq 0$ and $\epsilon>0$.
Thus, $\tau((a-t-\epsilon)_+)\leq \tau((b-t)_+)$. Letting $\epsilon\to 0$ and using the lower semicontinuity of $\tau$ we get that
 $\tau((a-t)_+)\leq \tau((b-t)_+)$ for all $\tau\in K$ and all $t\geq 0$. Now,   \cite[Lemma 3.4]{ng-robert}   asserts that if a C*-algebra $A$ has strict comparison by traces in a compact set $K$ then for any given $c,d\in A_+$ if
$\tau(c)\leq \tau(d)$ for all $\tau\in K$ then $\tau(c)\leq \tau(d)$ for all $\tau\in \T(A)$. Applied in $A=\prod_{n=1}^\infty A_n$ with  $K$ as above  this lemma implies that $a\smajtau b$, as desired.
\end{proof}

\begin{proof}[Proof of Theorem \ref{uniformthm}]
Let $\epsilon>0$.  Suppose for the sake of contradiction that no $N$
as in the statement of the theorem exists.  Then there exist unital C*-algebras $A_1,A_2,\dots$ with
strict comparison by traces  and selfadjoint contractions  $a_n,b_n\in A_n$ such that $a_n\maj b_n$ for all $n$ but 
\[
\|a_n-\frac 1 n\sum_{i=1}^n u_ib_nu_i^*\|\geq \epsilon
\]
for all $n$-tuples of unitaries $u_1,\dots,u_n\in A_n$.  Let $a_n'=\frac {a_n+1}{2}$
and $b_n'=\frac{b_n+1}{2}$. Observe that these are positive contractions such that $a_n'\maj b_n'$ for all $n$ and 
\[
\|a_n'-\frac 1 n\sum_{i=1}^n u_ib_n'u_i^*\|\geq \frac \epsilon 2,
\]
for all $n$-tuples of unitaries $u_1,\dots,u_n\in A_n$. 
Consider the positive elements
$a=(a_n')_{n=1}^\infty$ and $b=(b_n')_{n=1}^\infty$ in $\prod_{n=1}^\infty A_n$.  Since $a_n'\maj b_n'$ for all $n$ we have,  by Proposition \ref{prodstrict}, that $a\smajtau b$.  
Also,   $1-a_n'\smajtau 1-b_n'$ for all $n$ and so $1-a\smajtau 1-b$.  By Theorem \ref{mainthm} (keeping Remark \ref{positivecontractions} in mind), we have  that
  $a\maj b$. Hence, there exists
$N\in \N$ and unitaries $w_1,w_2,\dots,w_N\in  \prod_{n=1}^\infty A_n$
such that 
\[
\|a-\frac 1 N\sum_{i=1}^N w_ibw_i^*\|<\frac{\epsilon}{2}.
\]  
Projecting onto $A_N$ we arrive at a contradiction.
\end{proof}

\begin{theorem}\label{uniformsub}
For each $\epsilon>0$ there exists $N\in \N$ such that if $A$ is a C*-algebra with strict comparison of positive elements by traces and $a,b\in A_{\sa}$ are contractions such that $a\smaj b$ then  
\[
\|a-\frac 1 N\sum_{i=1}^N d_ibd_i^*\|<\epsilon
\] 
for some contractions $d_1,\ldots,d_N\in A$.
\end{theorem}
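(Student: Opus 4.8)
The plan is to follow the blueprint of the proof of Theorem \ref{uniformthm}, with compressions in place of unitary conjugations and Proposition \ref{prodstrict} (preservation of tracial submajorization under products of C*-algebras with strict comparison) as the key input, which is already available. The first step is to reduce to positive contractions. Given selfadjoint contractions $a,b$ with $a\smaj b$, Proposition \ref{reducetopositive} gives $a_+\smaj b_+$ and $a_-\smaj b_-$, and these are positive contractions. If the theorem is known for positive contractions with a constant $N_0(\epsilon)$, then applying it to $(a_+,b_+)$ and to $(a_-,b_-)$ with error $\epsilon/2$ yields a single $N:=N_0(\epsilon/2)$ together with contractions $d_1,\dots,d_N$ and $e_1,\dots,e_N$ such that $\|a_+-\frac1N\sum_i d_ib_+d_i^*\|<\epsilon/2$ and $\|a_--\frac1N\sum_i e_ib_-e_i^*\|<\epsilon/2$. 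Since $a_+a_-=b_+b_-=0$, the argument in the proof of Lemma \ref{orthosum} is quantitative with this fixed $N$: after cutting the $d_i$ down to $\overline{a_+Ab_+}$ and the $e_i$ down to $\overline{a_-Ab_-}$, the elements $f_i:=d_i+e_i$ are again contractions, and $\|a-\frac1N\sum_i f_ibf_i^*\|<\epsilon$. So it suffices to treat positive contractions.

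Now suppose, for a contradiction, that for some $\epsilon>0$ no such $N$ works for positive contractions. Then there are C*-algebras $A_1,A_2,\dots$ with strict comparison of positive elements by traces and positive contractions $a_n,b_n\in A_n$ with $a_n\smaj b_n$ but
\[
\Big\|a_n-\frac1n\sum_{i=1}^n d_ib_nd_i^*\Big\|\geq \epsilon
\]
for every $n$-tuple of contractions $d_1,\dots,d_n\in A_n$. Set $a=(a_n)_n$ and $b=(b_n)_n$ in $\prod_{n=1}^\infty A_n$. By Proposition \ref{easyimplication} we have $a_n\smajtau b_n$ for all $n$, so Proposition \ref{prodstrict} gives $a\smajtau b$ in $\prod_n A_n$. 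Since $a$ and $b$ are positive, condition \eqref{submajcond2} holds trivially for $r=0$ while \eqref{submajcond1} for $r=0$ is exactly $a\smajtau b$; hence Theorem \ref{mainsubmaj} shows that the distance from $a$ to $\co\{dbd^*\mid d\in \prod_n A_n,\,\|d\|\leq 1\}$ is zero, i.e. $a\smaj b$ in $\prod_n A_n$.

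It follows that $a$ lies in the closed convex hull of $\{dbd^*\mid \|d\|\leq 1\}$, so there are finitely many contractions $d_1,\dots,d_k\in\prod_n A_n$ and weights $t_i\geq 0$ with $\sum_i t_i=1$ and $\|a-\sum_i t_id_ibd_i^*\|<\epsilon/2$. Approximating the $t_i$ by nonnegative rationals with a common denominator $N$ summing to $1$ and repeating terms accordingly, we may replace this convex combination by a uniform average $\frac1N\sum_{i=1}^N d_i'b(d_i')^*$ (enlarging $N$ and relabelling the contractions) at the cost of increasing the error to strictly below $\epsilon$. Projecting onto the $N$-th coordinate $A_N$ sends $a\mapsto a_N$, $b\mapsto b_N$, each $d_i'$ to a contraction in $A_N$, and does not increase norms, so $\|a_N-\frac1N\sum_{i=1}^N (d_i')_Nb_N(d_i')_N^*\|<\epsilon$. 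Taking $n=N$, this contradicts the defining property of $A_N$, completing the proof.

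All the real content is front-loaded into Proposition \ref{prodstrict}; given it, the argument is a soft compactness-type contradiction, and the method (like that of Theorem \ref{uniformthm}) produces no explicit $N$. The one genuine point of care is that, unlike majorization, submajorization is not invariant under translation by scalars, so the selfadjoint case cannot be reduced to the positive case by the affine change $x\mapsto(x+1)/2$ used for Theorem \ref{uniformthm}; instead one splits into positive and negative parts and reassembles them through the orthogonality mechanism behind Lemma \ref{orthosum}, which is why one must check that that lemma's proof is quantitative with a fixed $N$.
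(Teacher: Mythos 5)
Your proof is correct and follows exactly the approach the paper sketches: reduce to positive contractions via Proposition \ref{reducetopositive} together with the orthogonality mechanism of Lemma \ref{orthosum}, then run the product-algebra contradiction argument of Theorem \ref{uniformthm} using Proposition \ref{prodstrict} and Theorem \ref{mainsubmaj}. Your closing remark---that the affine shift $x\mapsto(x+1)/2$ used in Theorem \ref{uniformthm} does not preserve $\smaj$, so the selfadjoint case must instead be handled by splitting into positive and negative parts with a fixed common $N$---is precisely the detail the paper leaves to the reader, and you have filled it in correctly.
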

\begin{proof}
It is easy to argue, using Proposition \ref{reducetopositive},  that it suffices 	to prove the theorem letting $a$
and $b$ range through all positive contractions. One can then proceed as in the proof
of Theorem \ref{uniformthm}, arguing by contradiction and relying on Proposition \ref{prodstrict}. The details are left to the reader.
\end{proof}	

Next we prove the same uniform majorization  among  C*-algebras with a uniform bound in their nuclear dimension. We start by recalling the definition of nuclear dimension and some background facts.

A completely positive contractive (c.p.c.) map $\phi\colon A\to B$  is  called of order zero  if it preserves orthogonality, i.e., $ab=0$ implies $\phi(a)\phi(b)=0$ for all $a,b\in A$. By \cite[Theorem 2.3]{winter-zacharias0}, such a map  has the form $\phi(a)=h\pi_\phi(a)$  where $\pi_\phi\colon A\to \mathrm M(C^*(\phi(A))$ is a homomorphism  and where $h\in \mathrm M(C^*(\phi(A)))$  is a positive element commuting with $\pi_\phi(A)$.
(Here $\mathrm M(C^*(\phi(A)))$ denotes the multiplier algebra of the C*-algebra generated by $\phi(A)$.) With the aid of this theorem one can easily deduce the preservation of various relations under c.p.c. order zero maps. For example, if $a=x^*x$ and $b=xx^*$ for some $x\in A$ then $\phi(a)=y^*y$ and $\phi(b)=yy^*$ for some $y\in B$ (we can choose $y=h^{1/2}\pi_\phi(x)$). The submajorization  relation is also preserved under c.p.c. order zero maps.  For if
$a,b\in A_{\sa}$ are such that  $a=\frac 1 N\sum_{i=1}^N d_ibd_i^*$ for some contractions $d_i\in A$, then 
$\phi(a)=\frac 1 N\sum_{i=1}^N \pi_\phi(d_i)\phi(b)\pi_\phi(d_i)^*$. Although
the contractions $\pi_{\phi}(d_i)$ belong to $M(C^*(\phi(A)))$ rather than $B$, by 
Lemma \ref{hereditary} (i) we still have that $\phi(a)\smaj \phi(b)$ in $B$. 
If, more generally, $a\smaj b$ in $A$, then an argument passing to limits readily proves that $\phi(a)\smaj \phi(b)$ in $B$.

Let $m\in \N$. Following Winter and Zacharias \cite{winter-zacharias} we say that a $\mathrm{C}^*$-algebra $A$ has  nuclear dimension  at most $m$
if for each finite set $F\subset A$ and $\epsilon>0$ there exist c.p.c.\ maps 
$A\stackrel{\psi_k}{\longrightarrow}C_k \stackrel{\phi_k}{\longrightarrow}A$
with $k=0,1,\dots,m$,
such that $C_k$ is a finite dimensional C*-algebra for all $k$, $\phi_k$ is an order zero map for all $k$, and 
\[
\|a-\sum_{k=0}^m\phi_k\psi_k(a)\|<\epsilon  \quad\hbox{for all }a\in F.
\]
In \cite[Proposition 3.2]{winter-zacharias}, Winter and Zacharias show that it is possible to arrange for the maps $\psi_k$ to be asymptotically of order zero. In this way one obtains c.p.c. order zero maps
\[
A\stackrel{\overline\psi_k}{\longrightarrow}N_k \stackrel{\overline\phi_k}{\longrightarrow}A_\infty
\]
for $k=0,\dots,m$, 
such that 
\[
\iota=\sum_{k=0}^m \overline\phi_k\overline\psi_k.
\] 
Here $A_\infty=(\prod_{\lambda} A_\lambda)/(\bigoplus_{\lambda} A_\lambda)$ is a sequence algebra over some upward directed set $\Lambda$,
$\iota\colon A\to A_\infty$
denotes the canonical embedding of $A$ in $A_\infty$ as ``constant sequences", and  
$N_k=(\prod_{\lambda} C_{k,\lambda})/(\bigoplus_{\lambda} C_{k,\lambda})$, where $C_{k,\lambda}$ is a finite dimensional C*-algebra  for all $\lambda\in \Lambda$ and all $k=0,\dots,m$.
 
\begin{lemma}\label{Nkhasstrict}
	Each C*-algebra $N_k$ as defined above has the property of strict comparison of positive elements by traces.
\end{lemma}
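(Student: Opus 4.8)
The plan is to reduce to a reduced product of finite-dimensional C*-algebras and then, following the strategy of \cite{ng-robert} behind Proposition \ref{prodstrict}, to detect any failure of comparison by a suitably chosen (possibly unbounded) trace concentrated on the fibres where comparison fails.

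\emph{Reductions.} It suffices to verify the implication \eqref{strictcomp} for positive elements lying in matrix amplifications $M_m(N_k)$, $m\in\N$: one passes from $(N_k\otimes\K)_+$ to such amplifications by replacing $a,b$ by $(a-t)_+$ and $(b-t)_+$ and then by nearby positive elements of some $M_m(N_k)$, shrinking $\epsilon$ to absorb the losses; this is routine. Since $M_m$ is finite-dimensional, $M_m(N_k)=\bigl(\prod_\lambda M_m(C_{k,\lambda})\bigr)\big/\bigl(\bigoplus_\lambda M_m(C_{k,\lambda})\bigr)$ is again a reduced product of finite-dimensional C*-algebras. Renaming, we must prove: if $N=\bigl(\prod_\lambda F_\lambda\bigr)/\bigl(\bigoplus_\lambda F_\lambda\bigr)$ with each $F_\lambda$ finite-dimensional and $a,b\in N_+$ satisfy $d_\tau(a)\le(1-\epsilon)d_\tau(b)$ for all $\tau\in\T(N)$, then $a\precsim_{\mathrm{Cu}}b$. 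Lift $a,b$ to positive $\hat a=(a_\lambda)_\lambda$, $\hat b=(b_\lambda)_\lambda$ in $\prod_\lambda F_\lambda$.

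\emph{Core of the argument.} Next I would record a coordinatewise description of Cuntz subequivalence in $N$: writing $\mathcal F$ for the filter on $\Lambda$ of complements of sets carrying only norm-null families, $a\precsim_{\mathrm{Cu}}b$ in $N$ iff for every $\delta>0$ there are $s>0$ and $\Lambda_0\in\mathcal F$ so that, for all $\lambda\in\Lambda_0$ and every simple summand $M_n\subseteq F_\lambda$, $\operatorname{rank}\!\bigl(\chi_{(\delta,\infty)}(a_\lambda)|_{M_n}\bigr)\le\operatorname{rank}\!\bigl(\chi_{(s,\infty)}(b_\lambda)|_{M_n}\bigr)$ (Cuntz comparison in finite dimensions being governed by ranks of spectral projections). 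For the contrapositive, assume $a\not\precsim_{\mathrm{Cu}}b$ and fix $\delta>0$ witnessing this; then for every $s>0$ the set $D_s$ of indices $\lambda$ admitting a summand with the reversed rank inequality meets every set of $\mathcal F$. By a diagonal argument over $s\to0$, choose an $\mathcal F$-non-negligible set of indices $\lambda$, scales $s_\lambda\to0$, and witnessing summands $M_{n(\lambda)}\subseteq F_\lambda$ with $\operatorname{rank}\chi_{(\delta,\infty)}(a_\lambda)\ge\operatorname{rank}\chi_{(s_\lambda,\infty)}(b_\lambda)+1$ there, together with an ultrafilter $\mathcal U$ refining $\mathcal F$ and containing these indices. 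Then build $\tau\in\T(N)$ concentrated on these summands: a normalized tracial state $\tau(x)=\lim_{\mathcal U}n(\lambda)^{-1}\operatorname{Tr}_{n(\lambda)}\!\bigl(E_\lambda(x_\lambda)\bigr)$ when the discrepancy is a fixed proportion of $n(\lambda)$ (in particular when $\chi_{(\delta,\infty)}(a_\lambda)$ fills the summand), or the Cuntz--Pedersen extension (as in Lemma \ref{hereditary}(ii)) of the unnormalized $\lim_{\mathcal U}\operatorname{Tr}_{n(\lambda)}\!\bigl(E_\lambda(\,\cdot\,)\bigr)$ on the ideal of uniformly finite-rank elements when the relevant ranks stay bounded. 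In either case one checks that $d_\tau$ recovers the corresponding rank count, so that $d_\tau(a)\ge\tau\bigl(\chi_{(\delta,\infty)}(a)\bigr)>(1-\epsilon)\,d_\tau(b)$, contradicting the hypothesis; hence $a\precsim_{\mathrm{Cu}}b$.

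\emph{Main obstacle.} The crux is the construction and evaluation of this detecting trace. Because $d_\tau$ is defined through $\lim_n\tau(\cdot^{1/n})$, the naive identity "$d_\tau=\tau$ of the support projection" fails in these reduced products (e.g. a normalized ultralimit trace assigns dimension $0$ to a bounded-rank projection when the fibre dimensions grow), so one must choose the ultrafilter, the witnessing summands, the scales $s_\lambda$, and the normalization of $\tau$ simultaneously, so that $\tau$ is genuinely concentrated on eigenspaces of $b_\lambda$ above $s_\lambda$ and of $a_\lambda$ above $\delta$; only then does $d_\tau$ count honest ranks and detect the "$+1$" discrepancy. A less self-contained alternative is to note that finite-dimensional C*-algebras have strict comparison witnessed by their finite (hence compact) sets of extremal tracial states, and to invoke the stability of strict comparison under reduced products from \cite{ng-robert} (the argument underlying Proposition \ref{prodstrict}) applied to the family $(C_{k,\lambda})_\lambda$.
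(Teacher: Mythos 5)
Your proposal does not close, and the gap is exactly where you flag it. The main line — lift to the product, describe Cuntz subequivalence fibrewise, and exhibit a detecting ultralimit trace — is a plausible programme, but the construction of the detecting trace is left as an acknowledged difficulty rather than carried out. The two operations at play, the supremum $d_\tau(\cdot)=\sup_n\tau((\cdot)^{1/n})$ and the $\mathcal U$-limit defining $\tau$, do not commute in general, so the ``$+1$'' rank discrepancy you isolate can be washed out under normalization; you would have to choose the scaling of the ultralimit trace after inspecting the asymptotics of the witnessing ranks and of $n(\lambda)$, and then prove that $d_\tau$ really counts the intended ranks. None of this is supplied. Separately, your ``less self-contained alternative'' attributes to \cite{ng-robert} stability of strict comparison under \emph{reduced products}, but the result used in Proposition~\ref{prodstrict} is only about products $\prod_\lambda A_\lambda$; passing to the quotient by $\bigoplus_\lambda A_\lambda$ is an extra step that your proposal never addresses.

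The paper takes a different and much shorter route that isolates exactly that quotient step. It notes that each $C_{k,\lambda}$ has strict comparison (being finite dimensional), that the product $\prod_\lambda C_{k,\lambda}$ then has it by the argument behind Proposition~\ref{prodstrict} from \cite{ng-robert}, and that strict comparison by traces passes to quotients. The last point is nontrivial and is handled by characterizing strict comparison by traces as ``strict comparison by 2-quasitraces together with all 2-quasitraces being traces'' (\cite[Proposition 3.6(i)]{ng-robert}); the 2-quasitrace comparison passes to quotients because it is equivalent to almost unperforation of the Cuntz semigroup (\cite[Proposition 6.2]{elliott-robert-santiago}) and almost unperforation descends to quotients (\cite[Proposition 2.2]{robert-tikuisis}), while ``all 2-quasitraces are traces'' obviously descends. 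Any version of your alternative will need an argument of this kind for the quotient step, and your direct ultrafilter approach would need the omitted trace construction to be made precise and verified to interact correctly with $d_\tau$.
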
	
\begin{proof}
	This is a consequence of $N_k$ being the quotient of a product of finite dimensional C*-algebras. More specifically, as remarked in the proof of Proposition \ref{prodstrict}, a product of C*-algebras with strict comparison by traces again has strict comparison by traces (in fact, by traces ranging in a suitable compact set $K$).  Since each $C_{k,\lambda}$ is finite dimensional it has strict comparison by traces. Thus, the same holds  for $\prod_\lambda C_{k,\lambda}$. Also,  the property of strict comparison by traces also passes to quotients. Indeed, by \cite[Proposition 3.6 (i)]{ng-robert}, strict comparison by traces is equivalent to ``strict comparison by 2-quasitraces and 2-quasitraces are traces". It is clear that if all the lower semicontinuous 2-quasitraces of a C*-algebra are traces the same holds for its quotients. Strict comparison by 2-quasitraces also  passes to quotients since, by \cite[Proposition 6.2]{elliott-robert-santiago}, it is equivalent to almost unperforation in the Cuntz semigroup   and the latter passes to quotients by \cite[Proposition 2.2]{robert-tikuisis} (it is called $0$-comparison in this reference). 
	\end{proof}	

\begin{lemma}
	For each $\epsilon>0$ there exists $N\in \N$ such that if $A$ is a C*-algebra of nuclear dimension at most $m$
	and $a,b\in A_+$ are such that $a\smajtau b$ then 
	\[
	(a-\epsilon)_+=\sum_{i=1}^{N(m+1)}x_i^*x_i\hbox{ and }\sum_{i=1}^{N(m+1)} x_ix_i^*\leq b,
	\]	
	for some $x_1,\ldots,x_{N(m+1)}\in A$.
\end{lemma}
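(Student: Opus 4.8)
The plan is to reduce, via the order-zero factorization coming from finite nuclear dimension, to the strict-comparison situation already under control. First I would rescale $a$ and $b$ to be positive contractions; this only rescales $\epsilon$, and the numerical constants that accumulate below can be absorbed at the end by running the whole argument with a fixed fraction of $\epsilon$ in place of $\epsilon$. Then I would invoke the asymptotic order-zero decomposition recalled just before this lemma: c.p.c.\ order zero maps $\overline\psi_k\colon A\to N_k$ and $\overline\phi_k\colon N_k\to A_\infty$ for $k=0,\dots,m$ with $\iota=\sum_{k=0}^m\overline\phi_k\overline\psi_k$, where by Lemma \ref{Nkhasstrict} each $N_k$ has strict comparison of positive elements by traces.

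The next step is to transport the hypothesis into the pieces $N_k$. From $a\smajtau b$ and Theorem \ref{mainsubmaj} we get $a\smaj b$; since c.p.c.\ order zero maps preserve submajorization (as noted in the excerpt), $\overline\psi_k(a)\smaj\overline\psi_k(b)$, and then $\overline\psi_k(a)\smajtau\overline\psi_k(b)$ in $N_k$ by Proposition \ref{easyimplication}, both elements being positive contractions. Working inside the strict-comparison algebra $N_k$, I would apply Theorem \ref{uniformsub} to obtain, for a suitable $\epsilon_1>0$ and $N_1=N_1(\epsilon_1)$ independent of $k$, $A$, $m$, contractions $d_{k,1},\dots,d_{k,N_1}\in N_k$ with $\|\overline\psi_k(a)-\tfrac1{N_1}\sum_i d_{k,i}\overline\psi_k(b)d_{k,i}^*\|<\epsilon_1$. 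Rewriting $\tfrac1{N_1}\sum_i d_{k,i}\overline\psi_k(b)d_{k,i}^*=\sum_i p_{k,i}p_{k,i}^*$ with $p_{k,i}=N_1^{-1/2}d_{k,i}\overline\psi_k(b)^{1/2}$ (so $\sum_i p_{k,i}^*p_{k,i}\le\overline\psi_k(b)$), and feeding the norm estimate into the standard perturbation lemma ($\|c-d\|<\eta$ gives $(c-\eta)_+=wdw^*$ for a contraction $w$), one gets $(\overline\psi_k(a)-\epsilon_1)_+=\sum_i(v_kp_{k,i})(v_kp_{k,i})^*$ with $\sum_i(v_kp_{k,i})^*(v_kp_{k,i})\le\overline\psi_k(b)$; that is, $(\overline\psi_k(a)-\epsilon_1)_+\precsim_{\mathrm{CP}}\overline\psi_k(b)$ with at most $N_1$ summands in $N_k$.

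Then I would carry these relations back through $\overline\phi_k$. Using the Winter--Zacharias structure theorem $\overline\phi_k(\,\cdot\,)=h_k\pi_k(\,\cdot\,)$ (and Lemma \ref{hereditary}(i) to stay inside $A_\infty$), a c.p.c.\ order zero map sends a Cuntz--Pedersen relation with $L$ summands to one with $L$ summands, replacing each summand $x$ by $h_k^{1/2}\pi_k(x)$, while positivity of $\overline\phi_k$ keeps the inequality $\sum pp^*\le(\cdot)$ intact. To reassemble the $m+1$ pieces without letting the error grow with $m$, I would start from the \emph{exact} identity $(\iota(a)-\epsilon)_+=g^{1/2}\iota(a)g^{1/2}=\sum_{k=0}^m g^{1/2}\overline\phi_k(\overline\psi_k(a))g^{1/2}$, where $g=(\iota(a)-\epsilon)_+\iota(a)^{-1}\in C^*(\iota(a))$ is a positive contraction, and show that each summand $g^{1/2}\overline\phi_k(\overline\psi_k(a))g^{1/2}$ is Cuntz--Pedersen below $\overline\phi_k(\overline\psi_k(b))$ with at most $N=N(\epsilon)$ summands, squeezing it (via the order-zero structure and a further cut-down) against the elements $\overline\phi_k((\overline\psi_k(a)-\epsilon_1)_+)$ controlled in the previous step. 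Adding over $k$ would then yield $(\iota(a)-\epsilon)_+\precsim_{\mathrm{CP}}\iota(b)$ with at most $N(m+1)$ summands in $A_\infty$, and the final move is to descend to $A$: lift the finitely many witnessing elements to bounded sequences, evaluate at a sufficiently advanced index to get the relation in $A$ up to a small error on both sides, and apply the perturbation lemma once more to make $(a-\epsilon)_+$ an exact sum.

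The main obstacle I anticipate is exactly this reassembly-and-descent. Keeping the cut-down at $\epsilon$ rather than $(m+1)\epsilon$ seems to force one to use the exact decomposition $(\iota(a)-\epsilon)_+=\sum_k g^{1/2}\overline\phi_k(\overline\psi_k(a))g^{1/2}$ together with a genuinely per-piece bound, depending only on $\epsilon$, on the number of Cuntz--Pedersen summands of each $g^{1/2}\overline\phi_k(\overline\psi_k(a))g^{1/2}$ below $\overline\phi_k(\overline\psi_k(b))$; and the passage back to $A$ must be arranged so that $\sum_i x_ix_i^*\le b$ holds on the nose, not merely up to $\epsilon$. Everything else — the scaling, Proposition \ref{easyimplication}, Theorem \ref{mainsubmaj}, Theorem \ref{uniformsub}, the order-zero functional calculus inequality $(\overline\phi_k(z)-\epsilon_1)_+\le\overline\phi_k((z-\epsilon_1)_+)$, and the perturbation lemma — should be routine.
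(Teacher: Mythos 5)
Your first two steps coincide with the paper's proof: factor through the asymptotic order-zero decomposition $\iota=\sum_{k=0}^m\overline\phi_k\overline\psi_k$, note that c.p.c.\ order zero maps preserve $\smaj$ (hence $\smajtau$, by Theorem~\ref{mainsubmaj}), observe via Lemma~\ref{Nkhasstrict} that each $N_k$ has strict comparison, and invoke Theorem~\ref{uniformsub} inside $N_k$. The place where your plan goes off the rails is the reassembly, and the root cause is an unnecessary constraint you imposed on yourself: you want the constant from Theorem~\ref{uniformsub} to be ``independent of $k$, $A$, $m$,'' so you cannot afford a per-piece error of order $\epsilon/(m+1)$ and are forced into the delicate exact decomposition $(\iota(a)-\epsilon)_+=\sum_k g^{1/2}\overline\phi_k(\overline\psi_k(a))g^{1/2}$ with a per-piece Cuntz--Pedersen bound. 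You correctly flag this as the main obstacle, and your proposal does not actually resolve it (it is not clear how to squeeze $g^{1/2}\overline\phi_k\overline\psi_k(a)g^{1/2}$ below $\overline\phi_k\overline\psi_k(b)$ with a bounded number of summands), so as written the argument has a genuine gap.

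But the constraint is self-inflicted. In the lemma, $m$ is a fixed parameter of the section and the quantifier is only ``for each $\epsilon>0$ there exists $N$''; there is nothing wrong with $N$ depending on $m$. With that freedom, the reassembly becomes trivial: apply Theorem~\ref{uniformsub} in $N_k$ with tolerance $\epsilon/(m+1)$, obtaining contractions $d_{k,i}$ and setting $y_{k,i}=\psi_k(b)^{1/2}d_{k,i}$ so that $\|\psi_k(a)-\sum_i y_{k,i}^*y_{k,i}\|<\epsilon/(m+1)$ and $\sum_i y_{k,i}y_{k,i}^*\leq\psi_k(b)$; push both through $\phi_k$ (using the order-zero structure, as you note); \emph{then} sum over $k$. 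The errors add to $\epsilon$ and the inequalities $\sum\leq\phi_k\psi_k(b)$ add to $\sum\leq b$. After lifting the finitely many witnesses from $A_\infty$ to $A$, a \emph{single} application of the Kirchberg--R\o rdam perturbation lemma at the very end converts $\|a-\sum x_{k,i}^*x_{k,i}\|<\epsilon$ into an exact expression for $(a-\epsilon)_+$, while the inequality $\sum x_{k,i}x_{k,i}^*\leq b$ is untouched. In short: do not cut down per piece inside $N_k$ and then try to reassemble; sum first, lift, and cut down once at the end. That collapses the difficulty you anticipated.
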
	
\begin{proof}
Let $A\stackrel{\overline\psi_k}{\longrightarrow}N_k \stackrel{\phi_k}{\longrightarrow}A_\infty$, for $k=0,\dots,m$,  be c.p.c. order zero maps as in the discussion above. Fix $k=0,\dots,m$. We have remarked above that c.p.c. order zero maps preserve the submajorization relation (which, by Theorem \ref{mainsubmaj}, agrees with $\smajtau$). Hence,  $a\smajtau b$ implies that $\psi_k(a)\smajtau \psi_k(b)$ in $N_k$. By Lemma \ref{Nkhasstrict} the C*-algebra algebra $N_k$ has the property of strict comparison of positive elements. Hence,
by  Theorem \ref{uniformsub},    there exists a number $N\in \N$ and elements $d_{k,1},\dots,d_{k,N}\in N_k$
	such that 
	\[
	\Big\|\psi_k(a)-\sum_{i=1}^N d_{k,i}^*\psi_k(b)d_{k,i}\Big\|<\frac{\epsilon}{m+1}\hbox{ and }  \sum_{i=1}^N d_{k,i}d_{k,i}^*\leq 1.
	\]  
	The number $N$ depends only on $\epsilon$ and $m$. If we set $y_{k,i}=(\psi_k(b))^{\frac 1 2}d_{k,i}$ then we can  rewrite these inequalities  as 
	\[
\Big\|\psi_k(a)-\sum_{i=1}^N y_{k,i}^*y_{k,i}\Big\|<\frac{\epsilon}{m+1}\hbox{ and }  \sum_{i=1}^N y_{k,i}y_{k,i}^*\leq \psi_k(b),
\] 
 Applying 
	$\phi_k$ on both inequalities and using that it is an order zero map  we deduce that 
	\[
	\Big\|\phi_k\psi_k(a)-\sum_{i=1}^N \tilde y_{k,i}^*\tilde y_{k,i}\Big\|<\frac{\epsilon}{m+1}\hbox{ and 
}\sum_{i=1}^N \tilde y_{k,i}\tilde y_{k,i}^*\leq \phi_k\psi_k(b)
\] for some $\tilde y_{k,i}\in A_\infty$. Adding over all $k$ we get that
\[
\Big\|a-\sum_{k=0}^m\sum_{i=1}^N \tilde y_{k,i}^*\tilde y_{k,i}\Big\|<\epsilon\hbox{ and }  \sum_{k=0}^m\sum_{i=1}^N \tilde y_{k,i}\tilde y_{k,i}^*\leq b.
\]
We can lift the elements $\tilde y_{k,i}$ to $\prod_\lambda A$ so that these inequalities are preserved. Then from those lifts we find  elements $x_{k,i}\in A$ such that the same inequalities hold in $A$; namely, 
\[
\Big\|a-\sum_{k=0}^m\sum_{i=1}^N x_{k,i}^*x_{k,i}\Big\|<\epsilon\hbox{ and }  \sum_{k=0}^m\sum_{i=1}^N x_{k,i}x_{k,i}^*\leq b.
\] By a well-known lemma of Kirchberg and R{\o}rdam, if $\|a-a'\|<\epsilon$ then $(a-\epsilon)_+=d a' d^*$ for some contraction $d\in A$ (\cite[Lemma 2.2]{kirchberg-rordam0}). Applying this lemma with 
$a'=\sum_{k=0}^m\sum_{i=1}^N x_{k,i}^*x_{k,i}$ we  can turn the inequalities above into the relations claimed by the lemma.
\end{proof}

\begin{proposition}\label{prodnuc}
	Let $A_1,A_2,\ldots$ be a sequence of C*-algebras with uniformly bounded nuclear dimensions. Let 
	$a=(a_n)_{n=1}^\infty$ and $b=(b_n)_{n=1}^\infty$ be positive elements in 
	$\prod_{n=1}^\infty A_n$
	such that  $a_n\smajtau b_n$ for all $n$. Then $a\smajtau b$ in $\prod_{n=1}^\infty A_n$
\end{proposition}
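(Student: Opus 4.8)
The plan is to follow the template of the proof of Proposition \ref{prodstrict}, with the preceding lemma (the finitary Cuntz--Pedersen decomposition for C*-algebras of nuclear dimension at most $m$) playing the role that strict comparison in a compact set of traces plays there. The guiding observation is that the inequality defining $\smajtau$ at one fixed cut-point $t$ is a \emph{single} consequence of a Cuntz--Pedersen type decomposition of $(a-t-\epsilon)_+$ relative to $(b-t)_+$: from such a decomposition with finitely many terms, the tracial identity $\tau(x^*x)=\tau(xx^*)$ together with additivity and monotonicity of $\tau$ yields the desired inequality at that $t$ immediately, so one never has to pass back to the submajorization relation itself.

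Concretely, let $m$ be a uniform bound for the nuclear dimensions of the $A_n$, put $B=\prod_{n=1}^\infty A_n$, and fix $\epsilon>0$ and $t\in[0,\infty)$. First I would note that $a_n\smajtau b_n$ immediately implies $(a_n-t)_+\smajtau (b_n-t)_+$ in $A_n$ for every $n$ (these inequalities form a subfamily of the ones defining $a_n\smajtau b_n$, since $((a_n-t)_+-s)_+=(a_n-t-s)_+$). Applying the preceding lemma to the pair $(a_n-t)_+,\,(b_n-t)_+$ in $A_n$ produces an integer $N=N(\epsilon,m)$, \emph{independent of $n$ and of $t$}, and elements $x_{n,1},\dots,x_{n,K}\in A_n$ with $K=N(m+1)$, such that
\[
(a_n-t-\epsilon)_+=\sum_{i=1}^{K}x_{n,i}^*x_{n,i}\qquad\text{and}\qquad \sum_{i=1}^{K}x_{n,i}x_{n,i}^*\leq (b_n-t)_+ .
\]
The second relation forces $\|x_{n,i}\|^2=\|x_{n,i}x_{n,i}^*\|\leq \|(b_n-t)_+\|\leq\|b\|$, so for each $i$ the sequence $x_i=(x_{n,i})_n$ is a genuine element of $B$. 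Since functional calculus and finite sums in $B$ are computed coordinatewise, these relations assemble to $(a-t-\epsilon)_+=\sum_{i=1}^{K}x_i^*x_i$ and $\sum_{i=1}^{K}x_ix_i^*\leq (b-t)_+$ in $B$.

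Now for any $\tau\in\T(B)$,
\[
\tau\big((a-t-\epsilon)_+\big)=\sum_{i=1}^{K}\tau(x_i^*x_i)=\sum_{i=1}^{K}\tau(x_ix_i^*)=\tau\Big(\sum_{i=1}^{K}x_ix_i^*\Big)\leq\tau\big((b-t)_+\big),
\]
and letting $\epsilon\to0$, using $(a-t-\epsilon)_+\to(a-t)_+$ and lower semicontinuity of $\tau$, gives $\tau((a-t)_+)\leq\tau((b-t)_+)$. Since $t\in[0,\infty)$ and $\tau\in\T(B)$ were arbitrary, this is exactly $a\smajtau b$ in $B$. The one point deserving care — and essentially the only place an error could creep in — is the uniformity of $N$: it must depend solely on $\epsilon$ and $m$, and crucially not on $n$, which is precisely what a uniform nuclear-dimension bound supplies. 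The remaining ingredients (the coordinatewise assembly in the product, the norm estimate on the $x_{n,i}$, and the limit in $\epsilon$) are routine.
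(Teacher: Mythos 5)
Your proof is correct and follows essentially the same route as the paper's: apply the preceding finitary Cuntz–Pedersen lemma coordinatewise with the uniform constant $N(\epsilon,m)$, assemble the bounded sequences $(x_{n,i})_n$ into elements of the product, evaluate traces using $\tau(x^*x)=\tau(xx^*)$, and let $\epsilon\to 0$. The only (cosmetic) difference is that the paper proves the inequality at $t=0$ and then remarks that the same argument applies to $(a-t)_+,(b-t)_+$, whereas you handle general $t$ directly by first observing $(a_n-t)_+\smajtau(b_n-t)_+$; both are the same argument.
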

\begin{proof}
	It suffices to show that $\tau(a)\leq \tau(b)$ for all $\tau\in \T(\prod_{n=1}^\infty A_n)$, for then the same argument applied to $(a-t)_+$ and $(b-t)_+$ in place of $a$ and $b$
	gives us  that $\tau((a-t)_+)\leq \tau((b-t)_+)$ for all $\tau$. Let $\epsilon>0$. From the previous lemma we deduce that for each $n$ there exist $x_{1,n},\ldots, x_{N(m+1),n}\in A_n$ such that 
	\[
	(a_n-\epsilon)_+=\sum_{i=1}^{N(m+1)} x_{i,n}^*x_{i,n} \hbox{ and }
	\sum_{i=1}^{N(m+1)} x_{i,n}x_{i,n}^*\leq b_n.
	\]
	The sequences $(x_{i,n})_n$ are necessarily bounded.
	So if we set $x_i=(x_{i,n})_n\in \prod_{n=1}^\infty A_n$ then
	\[
	(a-\epsilon)_+=\sum_{i=1}^{N(m+1)} x_{i}^*x_{i} \hbox{ and }
	\sum_{i=1}^{N(m+1)} x_{i}x_{i}^*\leq b.
	\]
	This implies that $\tau((a-\epsilon)_+)\leq \tau(b)$ for all lower semicontinuous traces $\tau$ on $\prod_{n=1}^\infty A_n$. Since $\epsilon>0$ is arbitrary, we get that $\tau(a)\leq \tau(b)$ for all $\tau$, as desired.
\end{proof}

\begin{theorem}\label{uniformnuc}
	Let $m\in \N$.
	For every $\epsilon>0$ there exists $N$ such that if $A$ is a unital C*-algebra with nuclear dimension at most $m$  and $a,b\in A$ are selfadjoint contractions such that 
	$
	a\in \overline{\co\{ubu^*\mid u\in \U(A)\}}
	$  
	then 
	\[
	\Big\|a-\frac 1 N\sum_{i=1}^N u_iau_i^*\Big\|<\epsilon
	\]
	for some $u_1,\dots,u_N\in \U(A)$.
\end{theorem}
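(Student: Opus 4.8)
The plan is to follow the proof of Theorem \ref{uniformthm} essentially verbatim, with Proposition \ref{prodnuc} taking the place of Proposition \ref{prodstrict}. Fix $\epsilon>0$ and suppose, for contradiction, that no $N$ as in the statement works. Choosing a counterexample for each $n\in\N$, we obtain unital C*-algebras $A_1,A_2,\dots$, each of nuclear dimension at most $m$, together with selfadjoint contractions $a_n,b_n\in A_n$ such that $a_n\maj b_n$ but
\[
\Big\|a_n-\frac 1 n\sum_{i=1}^n u_ib_nu_i^*\Big\|\geq\epsilon
\]
for every $n$-tuple of unitaries $u_1,\dots,u_n\in A_n$. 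Replacing $a_n$ and $b_n$ by $a_n'=\frac{a_n+1}{2}$ and $b_n'=\frac{b_n+1}{2}$ turns these into positive contractions with $a_n'\maj b_n'$ and the same approximation failure, now with $\epsilon/2$ in place of $\epsilon$ (the factor $1/2$ is unaffected by unitary conjugation, which fixes $1$).

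Next I would move to $\prod_{n=1}^\infty A_n$ and put $a=(a_n')_n$, $b=(b_n')_n$. Since $a_n'\maj b_n'$ we have $a_n'\smaj b_n'$, and conjugating the defining convex combinations by the same unitaries also gives $1-a_n'\smaj 1-b_n'$; hence $a_n'\smajtau b_n'$ and $1-a_n'\smajtau 1-b_n'$ by Proposition \ref{easyimplication}. Because the $A_n$ have uniformly bounded nuclear dimension, Proposition \ref{prodnuc} yields $a\smajtau b$ and $1-a\smajtau 1-b$ in $\prod_{n=1}^\infty A_n$. By Theorem \ref{mainthm}, in the form recorded in Remark \ref{positivecontractions}, this gives $a\maj b$ in $\prod_{n=1}^\infty A_n$, so there exist $N\in\N$ and unitaries $w_1,\dots,w_N\in\prod_{n=1}^\infty A_n$ with
\[
\Big\|a-\frac 1 N\sum_{i=1}^N w_ibw_i^*\Big\|<\frac{\epsilon}{2}.
\]
Projecting onto the $N$-th coordinate then contradicts the choice of $a_N'$ and $b_N'$.

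All the real content lies in Proposition \ref{prodnuc} and the lemma preceding it, which extract a uniform $N(m+1)$-term Cuntz--Pedersen-type estimate out of the asymptotically order-zero decomposition $\iota=\sum_{k=0}^m\overline\phi_k\overline\psi_k$, strict comparison for each $N_k$ (Lemma \ref{Nkhasstrict}), and the uniform submajorization bound of Theorem \ref{uniformsub}. Granting that machinery there is no genuine obstacle here; the one step that deserves a word, namely $a_n'\maj b_n'\Rightarrow 1-a_n'\maj 1-b_n'$, is immediate because conjugation by a unitary commutes with subtraction from $1$. In fact the argument could be compressed to a single sentence referring to Theorem \ref{uniformthm} and Proposition \ref{prodnuc}, but writing it out as above makes transparent that nothing beyond those two ingredients is needed.
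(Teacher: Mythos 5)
Your proof is correct and is exactly the argument the paper has in mind; the paper's own proof simply says ``the same proof of Theorem~\ref{uniformthm} applies, relying on Proposition~\ref{prodnuc} rather than Proposition~\ref{prodstrict},'' and you have written that substitution out in full (with slightly more care than the original in passing from $\maj$ to $\smajtau$ via Proposition~\ref{easyimplication}).
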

\begin{proof}
	The same proof of Theorem \ref{uniformthm} applies here relying on Proposition \ref{prodnuc} rather than on Proposition \ref{prodstrict}.
\end{proof}

\begin{example}\label{nouniformmaj}
In \cite[Theorem 1.4]{robert} an example is given of a simple unital C*-algebra $A$ with a unique tracial state $\tau$ such that for each $n\in \N$ there exists a selfadjoint element $a_n\in A$ of norm $1$ such that  $\tau(a_n)=0$
and the distance from $a_n$ to the set $\{\sum_{i=1}^n [b_i^*,b_i]\mid b_i\in A\}$ is 1. In this C*-algebra the property of uniform majorization cannot hold.
Indeed, by Haagerup and Zsido's theorem from \cite{haag-zsido}, we have $0\maj a_n$  for all $n$. We claim, however, that no convex combination of at most $n$ unitary conjugates
of $a_n$ can have norm  less than 1. For suppose that there were  unitaries $u_1,\dots,u_n\in A$ such that
\[
\Big\|\sum_{i=1}^n t_iu_i a_nu_i^*\Big\|<1,
\] 
for some $t_i\in [0,1]$ such that $\sum_{i=1}^n t_i=1$. Then 
\[
\Big\|a_n-\sum _{i=1}^n [b_i^*,b_i]\Big\|<1,
\]
where $b_i=t_i^{\frac1  2}u_i(1+a_n)^{\frac 1 2}$ for  $i=1,\dots,n$. This contradicts the property of $a_n$. Thus, no such unitaries  exist.
\end{example}

\begin{theorem}
Let $A_1,A_2,\dots$ be  unital C*-algebras with strict comparison of positive elements by traces or with a uniform bound on their nuclear dimensions. Let 
$A=\prod_{i=1}^\infty A_i/\bigoplus_{i=1}^\infty A_i$ and let $B\subseteq A$ be a separable C*-subalgebra.
	Then for each $a\in A_{\sa}$ we have that
	\[
	\overline{\co(\{uau^*\mid u\in \U(A)\})}\cap (B'\cap A)\neq \varnothing.
	\]
\end{theorem}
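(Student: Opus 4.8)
The plan is to combine a reindexing argument over the sequence algebra with a Dixmier-style averaging step. First I would reduce to the case where $a$ is a positive contraction: replacing $a$ by $\alpha a+\beta\cdot 1$ for suitable $\alpha\in(0,\infty)$, $\beta\in\R$ with $0\le \alpha a+\beta\cdot 1\le 1$ translates $\overline{\co\{uau^*\mid u\in\U(A)\}}$ to $\alpha\,\overline{\co\{uau^*\mid u\in\U(A)\}}+\beta\cdot 1$ and leaves $B'\cap A$ unchanged (as $1\in B'\cap A$), so it is enough to solve the problem for $\alpha a+\beta\cdot 1$. Fix a countable set $\{b_k\}_{k\ge 1}$ of selfadjoint contractions generating $B$ as a C*-algebra, write $A=\prod_n A_n/\bigoplus_n A_n$ with quotient map $q$, and fix lifts $a=q((a_n)_n)$, $b_k=q((b_{k,n})_n)$ with each $a_n$ a positive contraction and each $b_{k,n}$ a selfadjoint contraction in $A_n$. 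The goal is to produce $c=q((c_n)_n)$ such that $c_n\maj a_n$ in $A_n$ for every $n$ and $\|[c_n,b_{k,n}]\|\to 0$ as $n\to\infty$ for each $k$: the first property will force $c\in\overline{\co\{uau^*\mid u\in\U(A)\}}$, and the second forces $c\in B'\cap A$.

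The crucial ingredient is the following averaging fact in an arbitrary unital C*-algebra $D$: if $a\in D$ is a positive contraction, $b_1,\dots,b_p\in D$ are selfadjoint contractions, and $\delta>0$, then there exists $c\in\co\{uau^*\mid u\in\U(D)\}$ with $\|[c,b_j]\|<\delta$ for all $j$. To prove this, pass to $M=D^{**}$: by Dixmier's approximation theorem \cite[Theorem 8.3.5]{KadRing} the norm-closed convex hull of $\{uau^*\mid u\in\U(M)\}$ contains a central element $z\in Z(M)$, which in particular commutes with each $b_j\in D\subseteq M$; a standard application of Kaplansky's density theorem for unitaries shows $z$ lies in the weak$^*$-closure of $\co\{uau^*\mid u\in\U(D)\}$, so there is a net $(c_\mu)$ in $\co\{uau^*\mid u\in\U(D)\}$ with $c_\mu\to z$ weak$^*$. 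Then $[c_\mu,b_j]\to[z,b_j]=0$ weak$^*$ for each $j$, i.e.\ weakly in $D$, so the net $([c_\mu,b_1],\dots,[c_\mu,b_p])$ tends to $0$ weakly in $D^{\oplus p}$; Mazur's lemma produces a convex combination of these tuples of norm $<\delta$, and since $c\mapsto([c,b_1],\dots,[c,b_p])$ is affine, this convex combination has the form $([c,b_1],\dots,[c,b_p])$ for some $c\in\co\{uau^*\mid u\in\U(D)\}$. Any such $c$ automatically satisfies $0\le c\le 1$ and $c\maj a$ in $D$.

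It remains to assemble the pieces. For each $m$, apply the averaging fact inside each $A_n$ to $a_n$, the finite set $\{b_{1,n},\dots,b_{m,n}\}$, and tolerance $1/m$, getting $c^{(m)}_n\in\co\{ua_nu^*\mid u\in\U(A_n)\}$ with $\|[c^{(m)}_n,b_{k,n}]\|<1/m$ for all $k\le m$ and all $n$; set $c^{(m)}=q((c^{(m)}_n)_n)$. Since $c^{(m)}_n\maj a_n$ and $0\le c^{(m)}_n,a_n\le1$, Theorem \ref{mainthm} and Remark \ref{positivecontractions} give $c^{(m)}_n\smajtau a_n$ and $1-c^{(m)}_n\smajtau 1-a_n$ in $A_n$; by Proposition \ref{prodstrict} (strict comparison case) or Proposition \ref{prodnuc} (finite nuclear dimension case) these pass to $\prod_n A_n$, and since every $\tau\in\T(A)$ pulls back along $q$ to a trace in $\T(\prod_n A_n)$ and $q$ commutes with functional calculus, they descend to $A$; applying Remark \ref{positivecontractions} and Theorem \ref{mainthm} once more yields $c^{(m)}\maj a$ in $A$, while $\|[c^{(m)},b_k]\|\le1/m$ for $k\le m$. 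Now diagonalize: choose $n_1<n_2<\cdots$ with $\|[c^{(m)}_n,b_{k,n}]\|<2/m$ whenever $n\ge n_m$ and $k\le m$ (possible since $\|[c^{(m)},b_k]\|=\limsup_n\|[c^{(m)}_n,b_{k,n}]\|\le 1/m$), and set $c_n:=c^{(m)}_n$ for $n_m\le n<n_{m+1}$. Then $c:=q((c_n)_n)$ satisfies $c_n\maj a_n$ in $A_n$ for every $n$---hence $c\maj a$ in $A$ by the same descent---and $\|[c_n,b_{k,n}]\|\to0$ for each $k$, so $[c,b_k]=0$ for all $k$ and therefore $c\in B'\cap A$. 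Undoing the first reduction finishes the proof.

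The main obstacle is the averaging fact of the second paragraph, and in particular the point that Dixmier's averaging theorem---which only provides a central element in a weak$^*$-closure inside the bidual---can be turned into an honest norm-small convex combination of unitary conjugates in $D$ itself; this succeeds precisely because the constraint ``commutes with $B$'' is linear in $c$, so Mazur's lemma applies after passing to the weak topology. The reindexing and diagonalization are routine for sequence algebras, and the regularity hypotheses are used only through Propositions \ref{prodstrict} and \ref{prodnuc} to transport tracial submajorization coordinatewise through the product---importantly, no uniform bound on the number of unitaries in the convex combinations is required at this stage.
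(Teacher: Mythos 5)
Your proposal is correct and follows essentially the paper's approach: apply the almost-commuting averaging fact coordinatewise with tolerances tending to $0$, then use Propositions \ref{prodstrict}/\ref{prodnuc} together with Theorem \ref{mainthm} and Remark \ref{positivecontractions} to transport (tracial sub)majorization through the product and down to the quotient $A$. The only cosmetic differences are that you reprove the averaging fact from scratch (Dixmier in $D^{**}$, Kaplansky density, Mazur's lemma applied to the weakly null net of commutator tuples), whereas the paper cites it as \cite[Lemma 6.4]{kirchberg-rordam}, and you organize the coordinatewise step as a diagonalization rather than having the averaging tolerance decay with $n$ directly.
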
	
\begin{proof}
Let $(a_n)_n\in \prod_{n=1}^\infty A_n$ be a lift of $a$ with $a_n\in (A_n)_{\sa}$  and $\|a_n\|\leq \|a\|$ for all $n$. 
Let $(b_n^{(1)})_n,(b_n^{(2)})_n,\ldots\in \prod_{n=1}^\infty A_n$
be lifts of a sequence $b^{(1)},b^{(2)},\ldots\in B$ dense in $B$. 
\cite[Lemma 6.4]{kirchberg-rordam} asserts that given an element and a finite set in a C*-algebra we can find a convex combination of unitary conjugates of the given element that almost commutes with the given finite set. (This is derived from Dixmier's approximation property in $A^{**}$.)  Applying this lemma, we can find  for each $a_n\in A_n$ a selfadjoint element $a_n'\maj a_n$
such that $\|[a_n',b_i^{(j)}]\|\leq \frac 1 n\|a\|\|b^{(j)}\|$  for all $1\leq i,j\leq n$. 
Let $a'$ denote the image of $(a_n')_n$ in $A$. Then $a'$
commutes with $b^{(j)}$ for all $j$, and so $a'\in B'\cap A$.
On the other hand, from the fact that $a_n'\maj a_n$ for all $n$ we get that
$(a_n')_n\maj (a_n)_n$ in $\prod_{n=1}^\infty A_n$. In the case that all the C*-algebras have strict comparison by traces, this  follows from Proposition  \ref{prodstrict}. If their
nuclear dimensions  are uniformly bounded, this follows from Proposition \ref{prodnuc}.  Passing to the quotient we get that  $a'\maj a$ in $A$.
That is, $a'\in \overline{\co(\{uau^*\mid u\in \U(A)\})}$.
\end{proof}

\begin{bibdiv}
\begin{biblist}
\bib{ando}{article}{
	author={Ando, T.},
	title={Majorizations and inequalities in matrix theory},
	journal={Linear Algebra Appl.},
	volume={199},
	date={1994},
	pages={17--67},
}

\bib{radius}{article}{
	author={Blackadar, Bruce},
	author={Robert, Leonel},
	author={Tikuisis, Aaron P.},
	author={Toms, Andrew S.},
	author={Winter, Wilhelm},
	title={An algebraic approach to the radius of comparison},
	journal={Trans. Amer. Math. Soc.},
	volume={364},
	date={2012},
	number={7},
	pages={3657--3674},
}

\bib{CuntzPedersen}{article}{
	author={Cuntz, Joachim},
	author={Pedersen, Gert Kjaerg{\.a}rd},
	title={Equivalence and traces on $C\sp{\ast} $-algebras},
	journal={J. Funct. Anal.},
	volume={33},
	date={1979},
	number={2},
	pages={135--164},
}

\bib{day}{article}{
	author={Day, Mahlon M.},
	title={Amenable semigroups},
	journal={Illinois J. Math.},
	volume={1},
	date={1957},
	pages={509--544},
}

\bib{elliott-robert-santiago}{article}{
   author={Elliott, George A.},
   author={Robert, Leonel},
   author={Santiago, Luis},
   title={The cone of lower semicontinuous traces on a $C^*$-algebra},
   journal={Amer. J. Math.},
   volume={133},
   date={2011},
   number={4},
   pages={969--1005},
}

\bib{munster}{article}{
	author={Farah, Ilijas},
	author={Hart, Bradd},
	author={Lupini, Martino},
	author={Robert, Leonel},
	author={Tikuisis, Aaron},
	author={Vignati, Alessandro},
	author={Winter, Wilhelm},
	title={The Model theory of C*-algebras},
	eprint={https://arxiv.org/abs/1602.08072},
	date={2016},
		}

\bib{haag-zsido}{article}{
   author={Haagerup, Uffe},
   author={Zsid{\'o}, L{\'a}szl{\'o}},
   title={Sur la propri\'et\'e de Dixmier pour les $C^{\ast} $-alg\`ebres},
   language={French, with English summary},
   journal={C. R. Acad. Sci. Paris S\'er. I Math.},
   volume={298},
   date={1984},
   number={8},
   pages={173--176},
}

\bib{hiai-nakamura}{article}{
	author={Hiai, Fumio},
	author={Nakamura, Yoshihiro},
	title={Closed convex hulls of unitary orbits in von Neumann algebras},
	journal={Trans. Amer. Math. Soc.},
	volume={323},
	date={1991},
	number={1},
	pages={1--38},
}

\bib{KadRing}{book}{
	author={Kadison, Richard V.},
	author={Ringrose, John R.},
	title={Fundamentals of the theory of operator algebras. Vol. II},
	series={Graduate Studies in Mathematics},
	volume={16},
	note={Advanced theory;
		Corrected reprint of the 1986 original},
	publisher={American Mathematical Society, Providence, RI},
	date={1997},
	pages={i--xxii and 399--1074},
}

\bib{kamei}{article}{
	author={Kamei, Eizaburo},
	title={Majorization in finite factors},
	journal={Math. Japon.},
	volume={28},
	date={1983},
	number={4},
	pages={495--499},
}

\bib{kirchberg-rordam0}{article}{
   author={Kirchberg, Eberhard},
   author={R{\o}rdam, Mikael},
   title={Infinite non-simple $C^*$-algebras: absorbing the Cuntz
   algebras $\scr O_\infty$},
   journal={Adv. Math.},
   volume={167},
   date={2002},
   number={2},
   pages={195--264},
}

\bib{kirchberg-rordam}{article}{
	author={Kirchberg, Eberhard},
	author={R{\o}rdam, Mikael},
	title={Central sequence $C\sp *$-algebras and tensorial absorption of the
		Jiang-Su algebra},
	journal={J. Reine Angew. Math.},
	volume={695},
	date={2014},
	pages={175--214},
}

\bib{ng-robert}{article}{
   author={Ng, Ping Wong},
   author={Robert, Leonel},
   title={Sums of commutators in pure C*-algebras},
   journal={M\"unster J. of Math. (to appear)},
   eprint={http://arxiv.org/abs/1504.00046},
}

\bib{ng-skoufranis}{article}{
   author={Ng, Ping Wong},
   author={Skoufranis, Paul},
   title={Closed convex hulls of unitary orbits in certain simple real rank zero C∗-algebras},
   date={2016},
   eprint={http://arxiv.org/abs/1603.07059},
}

\bib{robert-tikuisis}{article}{
	author={Robert, Leonel},
	author={Tikuisis, Aaron},
	title={Nuclear dimension and Z-stability of non-simple C*-algebras},
	journal={Trans. Amer. Math. Soc. (to appear)},
	eprint={http://arxiv.org/abs/1308.2941},
}

\bib{robert2}{article}{
	author={Robert, Leonel},
	title={On the comparison of positive elements of a $C\sp *$-algebra by
		lower semicontinuous traces},
	journal={Indiana Univ. Math. J.},
	volume={58},
	date={2009},
	number={6},
	pages={2509--2515},
}

\bib{robert}{article}{
   author={Robert, Leonel},
   title={Nuclear dimension and sums of commutators},
   journal={Indiana Univ. Math. J.},
   volume={64},
   date={2015},
   number={2},
   pages={559--576},
}

\bib{sinclair-smith}{book}{
   author={Sinclair, Allan M.},
   author={Smith, Roger R.},
   title={Finite von Neumann algebras and masas},
   series={London Mathematical Society Lecture Note Series},
   volume={351},
   publisher={Cambridge University Press, Cambridge},
   date={2008},
   pages={x+400},
}

\bib{skoufranis}{article}{
   author={Skoufranis, Paul},
   title={Closed convex hulls of unitary orbits in $C^*$-algebras of real
   rank zero},
   journal={J. Funct. Anal.},
   volume={270},
   date={2016},
   number={4},
   pages={1319--1360},
}

\bib{winter-zacharias0}{article}{
	author={Winter, Wilhelm},
	author={Zacharias, Joachim},
	title={Completely positive maps of order zero},
	journal={M\"unster J. Math.},
	volume={2},
	date={2009},
	pages={311--324},
}

\bib{winter-zacharias}{article}{
	author={Winter, Wilhelm},
	author={Zacharias, Joachim},
	title={The nuclear dimension of $C\sp \ast$-algebras},
	journal={Adv. Math.},
	volume={224},
	date={2010},
	number={2},
	pages={461--498},
}

\end{biblist}
\end{bibdiv}

\end{document}